\renewcommand{\keywords}[1]{{\def\and{\unskip, }\textbf{Keywords}\enspace#1}}
\newcommand{\subclass}[1]{{\def\and{\unskip; }\textbf{MSC (2010)}\enspace#1}} 
\newcommand{\gobbletwo}[2]{}
\renewcommand{\email}[1]{#1}
\let\oldopenbox\openbox
\let\openbox\relax
\let\openbox\oldopenbox
\renewenvironment{proof}[1][\proofname]{\par
\pushQED{\qed}%
\normalfont \topsep6\p@\@plus6\p@\relax
\trivlist
\item\relax
{\itshape
#1\@addpunct{.}}\hspace\labelsep\ignorespaces
}{%
\popQED\endtrivlist\@endpefalse
}
\let\originalleft\left
\let\originalright\right
\renewcommand{\left}{\mathopen{}\mathclose\bgroup\originalleft}
\renewcommand{\right}{\aftergroup\egroup\originalright}
\newcommand{\R}{\mathbb R}
\newcommand{\loneextR}{[-\infty,\infty]}
\newcommand{\extR}{\loneextR}
\NewDocumentCommand{\Time}{o}{\IfNoValueTF{#1}{\R_+}{\lcro{#1,\infty}}}
\NewDocumentCommand{\Paths}{o}{C(\Time[#1])}
\NewDocumentCommand{\Spacetime}{o}{\Paths[#1] \times \Time[#1]}
\newcommand{\G}{\mathcal G}
\newcommand{\Fnull}{\mathcal{F}^0}
\newcommand{\F}{\mathcal F}
\renewcommand{\P}{\mathbb P}
\newcommand{\cost}{c}
\newcommand{\costS}{c'}
\newcommand{\push}[2]{#1_*(#2)}
\newcommand{\proj}[1]{\mathsf{proj}_{#1}}
\newcommand{\indicator}[1]{1_{#1}}
\newcommand{\compl}[1]{#1^c}
\newcommand{\restr}[2]{#1_{\restriction#2}}
\newcommand{\mrestr}[2]{#1_{\restriction#2}}
\newcommand{\invimage}[2]{#1^{-1}\left[#2\right]}
\newcommand{\ftimes}{\times}
\newcommand{\mtimes}{\otimes}
\newcommand{\atimes}{\otimes}
\newcommand{\Borel}[1]{\mathcal{B}\left(#1\right)}
\newcommand{\Lebesgue}{\mathcal{L}}
\newcommand{\overbar}[1]{\mkern 1.5mu\overline{\mkern-1.5mu#1\mkern-1.5mu}\mkern 1.5mu}
\newcommand{\closure}[1]{\overbar{#1}}
\newcommand{\rmax}[2]{#1^*(#2)}
\newcommand{\rmaxP}[2]{#1^*_{#2}}
\newcommand{\abs}[1]{\left|#1\right|}
\newcommand{\Lnorm}[2]{\lVert #2 \rVert_{L^{#1}}}
\renewcommand{\d}[1]{\,d#1}
\newcommand{\E}{\mathbb E}
\newcommand{\cExp}[3][0]{\E\left[#3\middle|\smash{\F^{#1}_{#2}}\right]}
\newcommand{\RST}{\mathsf{RST}}
\newcommand{\RSTt}[1]{\mathsf{RST}^{#1}}
\newcommand{\RSTy}[1]{\mathsf{RST}_{#1}}
\newcommand{\RSTty}[2]{\mathsf{RST}^{#1}_{#2}}
\newcommand{\RSTyd}[2]{\mathsf{RST}_{#1}(#2)}
\newcommand{\RSTtyd}[3]{\mathsf{RST}^{#1}_{#2}(#3)}
\newcommand{\RSTtf}[1]{\mathsf{RST}^{#1}(\mathcal{P})}
\newcommand{\RSTyf}[1]{\mathsf{RST}_{#1}(\mathcal{P})}
\newcommand{\RSTtyf}[2]{\mathsf{RST}^{#1}_{#2}(\mathcal{P})}
\newcommand{\target}{\mu}
\newcommand{\initial}{\lambda}
\DeclareDocumentCommand{\timeindexed}{ O{t} O{0} m }{(#3_{#1})_{#1 \geq #2}}
\newcommand{\comment}[1]{}
\newcommand{\tentative}[1]{}
\newcommand{\fullstop}{\text{ .}}
\newcommand{\comma}{\text{ ,}}
\newcommand{\lcro}[1]{[#1)}\comment{]}
\newcommand{\lorc}[1]{(#1]}\comment{)}
\newcommand{\Ct}[1][t]{C\left(\lcro{#1,\infty}\right)}
\newcommand{\init}[1]{#1^<}
\newcommand{\Blaw}[2]{\mathbb W^{#1}_{#2}}
\newcommand{\Sgood}[1]{measurable, \(\timeindexed{\Fnull}\)-adapted}
\numberwithin{equation}{section}
\def\newaliasedtheorem#1[#2]#3{%
  \newaliascnt{#1@alt}{#2}
  \newtheorem{#1}[#1@alt]{#3}
  \expandafter\newcommand\csname #1@altname\endcsname{#3}
}
\newtheorem{theorem}{Theorem}
\numberwithin{theorem}{section}
\theoremstyle{definition}
\newtheorem*{problem}{Problem}
\newtheorem{assumption}{Assumption}
\newcommand{\assref}[1]{\ref{ass:specific}.\ref{#1}}
\newcommand{\probref}[1]{{\normalfont (\nameref{#1})}}
\theoremstyle{remark}
\newcommand{\SG}{\mathsf{SG}}
\newcommand{\tsint}{\scalebox{1.2}{$\int$}}
\newcommand{\tsiint}{\scalebox{1.2}{$\iint$}}
\newcommand{\tsiiint}{\scalebox{1.2}{$\iiint$}}
\newcommand{\conc}{\odot}
\newcommand{\shortandlong}[3][short]{\IfBeginWith{short}{#1}{#2}{\IfBeginWith{long}{#1}{#3}{\IfBeginWith{shortandlong}{#1}{{\color{teal}\textbf{Short Version.} #2} {\color{blue}\textbf{Long Version.} #3}}{\errmessage{First optional argument of shortandlong should be either "short" or "long"}}}}}
\begin{document}
\title{Geometry of Distribution-Constrained Optimal Stopping Problems}
\author{%
Mathias Beiglb\"ock \and Manu Eder \and Christiane Elgert \and Uwe Schmock
}
\thanks{%
  Corresponding author: Mathias Beiglb\"ock, \email{mathias.beiglboeck@tuwien.ac.at}\\
  The first and the second author were supported by FWF-grant  Y00782,  the third author by Jubil\"aumsfonds 16549, the fourth author by Jubil\"aumsfonds 16549 and FWF-grant P25216.
  \\[-0.5ex]\rule{60pt}{0.4pt}\\[0.5ex]
  All authors \\ TU Vienna, Faculty of Mathematics, Wiedner Haupstra\ss e 8-10, 1040 Vienna.
  \\[-0.5ex]\rule{60pt}{0.4pt}\\[0.5ex]
  \gobbletwo
}
\date{\today}

\begin{abstract}
We adapt ideas and concepts developed in optimal transport (and its martingale variant)  to give a geometric description of optimal stopping times \(\tau\) of Brownian motion subject to the constraint that the distribution of \(\tau\) is a given probability \(\target\). The methods work for a large class of cost processes. (At a minimum we need the cost process to be measurable and \(\timeindexed{\Fnull}\)-adapted. Continuity assumptions can be used to guarantee existence of solutions.) We find that for many of the cost processes one can come up with, the solution is given by the first hitting time of a barrier in a suitable phase space. As a by-product we recover classical solutions of the inverse first passage time problem / Shiryaev's problem. 

\keywords{distribution-constrained optimal stopping \and optimal transport \and inverse first passage problem \and Shiryaev's problem}

\subclass{Primary 60G42, 60G44 \and Secondary 91G20}
\end{abstract}

\maketitle

\section{Appetizer}
\label{sec:appetizer}
To whet the reader's appetite and to give some idea of the kind of problems that can be solved with the methods presented in this paper we would like to start with two corollaries to our main results. In \autoref{sec:results} we will present these main results and in \autoref{sec:digesting} we will use them to prove \autoref{cor:Bbarrier} from them.

Both \autoref{cor:Bbarrier} and \autoref{cor:maxbarrier} assert that the solutions of certain optimal stopping problems can be described by a barrier in an appropriate phase space.

In this section, let \(\timeindexed{B}\) be a Brownian motion started\footnote{We note that the results presented in this section remain valid for Brownian motions started according to a general law $\lambda$ at the cost of slightly more tedious moment conditions in the formulation of Corollaries \ref{cor:Bbarrier} and \ref{cor:maxbarrier}.}  in \(0\) on some filtered probability space \( (\Omega,\G,\timeindexed{\G},\P ) \) satisfying the usual conditions and let \(\target\) be a measure on \( (0,\infty) \).
First we consider optimal stopping problems of the following form.

\begin{problem}[\textsc{OptStop}$^{\psi(B_t,t)}$]
\label{OptStoppsiBt}
Among all stopping times \(\tau \sim \target\) on \( (\Omega,\G,\timeindexed{\G},\P ) \) find the maximizer of
\begin{align*}
\tau \mapsto \E[Z_{\tau}]\comma
\end{align*}
where the process $Z$ is of the form \(Z_t = \psi(B_t,t)\).
\end{problem}

\begin{corollary}
\label{cor:Bbarrier}
Assume that \(\target\) has finite first moment. There is an upper semicontinuous function \( \beta : \Time \rightarrow \loneextR \) such that the stopping time
\begin{align}\label{eq:barriertype}
\tau := \inf \left\{ t > 0 : B_t \leq \beta(t) \right\}
\end{align}
has distribution \(\target\).

\(\tau\) has the following uniqueness properties:
On the one hand it is the a.s.\ unique stopping time which has distribution \(\target\) and which is of the form \eqref{eq:barriertype} (we will later say that such a stopping time is the hitting time of a \emph{downwards barrier}).

On the other hand \(\tau\) is also the a.s.\ unique solution of \probref{OptStoppsiBt} for a number of different \(\psi\).
Namely:
\begin{itemize}
\item Let \(p \geq 0\), assume \(\target\) has finite moment of order \(\frac 1 2 + p + \varepsilon\) for some \(\varepsilon > 0\) and let \(A: \Time \rightarrow \R\) be strictly increasing and \(|A(t)| \leq K (1 + t^p)\) for some constant \(K\).%
\footnote{One may of course choose \(0 \leq p < \frac 1 2\), \(\varepsilon := \frac 1 2 - p\) and e.g. \(A(t) := t^p\) so that no moment conditions beyond those at the very beginning of this theorem are imposed on \(\target\).}
Then we may choose
\[ \psi(B_t,t) = B_t A(t) \fullstop \]\vspace{-3mm}
\item Let \(p \geq 2\), assume \(\target\) has finite moment of order \(\frac p 2 + \varepsilon\) for some \(\varepsilon > 0\) and let \(\phi : \R \rightarrow \R\) satisfy \( \phi''' > 0\) as well as \(\abs{\phi(y)} \leq K (1 + |y|^p)\) for some constant \(K\). Then we may choose
\[ \psi(B_t,t) = \phi(B_t) \fullstop \]\vspace{-3mm}
\tentative{\footnote{\tentative{We say \(p \geq 2\) because for \(p < 2\) there are no \(\phi : \R \rightarrow \R\) with \(\phi''' > 0\) and \(\abs{\phi(y)} \leq K (1 + |y|^p)\).}}}%
\end{itemize}
\end{corollary}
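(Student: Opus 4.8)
The plan is to read off all assertions from the general existence, structure, and uniqueness results announced in \autoref{sec:results}, so that the only genuine work is (a) exhibiting one admissible cost to which those results apply already under the sole standing hypothesis that \(\target\) has a finite first moment, (b) checking the integrability and the monotonicity requirements of those results for the remaining choices of \(\psi\), and (c) combining everything with the uniqueness of downwards-barrier hitting times. Since \(Z_t=\psi(B_t,t)\) depends on the stopped path only through the pair \((B_t,t)\), the general theorem specializes to produce a barrier in the \((B_t,t)\)-phase space. Concretely I would first apply it to \(Z_t=B_tA(t)\) with, say, \(A(t)=t^p\) and \(0\le p<\tfrac12\), \(\varepsilon:=\tfrac12-p\); then the demanded moment is of order \(\tfrac12+p+\varepsilon=1\), i.e.\ exactly the standing assumption. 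The process \(Z\) is continuous, hence measurable and \(\timeindexed{\Fnull}\)-adapted, and for every admissible \(\tau\sim\target\) one has \(\E[\abs{B_\tau}\,\tau^{p}]<\infty\): Hölder's inequality with conjugate exponents \(q,q'\) chosen so that \(2<q'<1/p\) (possible precisely because \(p<\tfrac12\)) reduces this to \(\E[(\sup_{s\le\tau}\abs{B_s})^{q}]^{1/q}\E[\tau^{pq'}]^{1/q'}\), which is finite by the Burkholder--Davis--Gundy inequality (so \(\E[(\sup_{s\le\tau}\abs{B_s})^{q}]\le C\,\E[\tau^{q/2}]\) with \(q/2<1\)) and \(\E[\tau]<\infty\); as \(p<\tfrac12\) all exponents can be taken with strict slack, so the uniform integrability that the compactness part of the framework requires is automatic. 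The main theorem then yields an optimizer \(\tau\) that a.s.\ coincides with the hitting time of a downwards barrier, and replacing that barrier by its upper semicontinuous envelope (which, by the regularity of barriers, changes the hitting time only off a null set) gives the function \(\beta\colon\Time\to\loneextR\) of the statement, with \(\tau\sim\target\) by admissibility.

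\textbf{Uniqueness among barrier hitting times.} Next I would invoke the fact that a downwards-barrier hitting time is a.s.\ determined by its distribution: two downwards barriers inducing stopping times of the same law \(\target\) induce stopping times that agree a.s. This is the first uniqueness assertion, and it pins down \(\tau\) (up to null sets) as \emph{the} downwards-barrier hitting time with law \(\target\); it is also the mechanism by which the remaining claims will be finished.

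\textbf{The remaining costs.} For each \(\psi\) in the list, measurability and adaptedness of \(Z_t=\psi(B_t,t)\) are clear, and integrability follows as above from \(\abs{Z_t}\le\abs{B_t}\,K(1+t^{p})\), resp.\ \(\abs{Z_t}\le K(1+\abs{B_t}^{p})\): controlling \(\sup_{s\le\tau}\abs{B_s}\) in \(L^{q}\) for every \(q\) via Burkholder--Davis--Gundy/Doob and pairing with the assumed moment of \(\target\) (of order \(\tfrac12+p+\varepsilon\), resp.\ \(\tfrac p2+\varepsilon\)) through Hölder gives \(\E\abs{Z_\tau}<\infty\) uniformly over admissible \(\tau\), the \(\varepsilon\) supplying the uniform integrability. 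The substantive point is the monotonicity hypothesis that forces the optimizer to be a \emph{downwards} barrier rather than an arbitrary one, which I would verify by computing the conditional gain of postponing a stop. Writing \(\G_s\) for the Brownian filtration, for \(B_s=x\) and \(s<t\),
\[
\E[B_tA(t)\mid\G_s]-B_sA(s)=x\bigl(A(t)-A(s)\bigr),
\]
which is strictly increasing in \(x\) because \(A\) is strictly increasing, so \(A(t)>A(s)\); and, with \(N\sim\mathcal N(0,1)\) and \(u(x,r):=\E[\phi(x+\sqrt r\,N)]\) solving the heat equation (the polynomial growth of \(\phi\) justifying the differentiations under \(\E\)),
\[
\partial_x\Bigl(\E[\phi(B_t)\mid\G_s]-\phi(B_s)\Bigr)\Big|_{B_s=x}=\E[\phi'(x+\sqrt{t-s}\,N)]-\phi'(x)=\int_0^{t-s}\tfrac12\,\E[\phi'''(x+\sqrt r\,N)]\d r>0
\]
by \(\phi'''>0\). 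In both cases the marginal benefit of delaying a stop is increasing in the spatial coordinate, i.e.\ high paths want to continue and low paths to stop, which is exactly the configuration under which the general theorem returns a downwards-barrier optimizer.

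\textbf{Conclusion and the main obstacle.} Applying the general theorem to each admissible \(\psi\) gives an optimizer that is a downwards-barrier hitting time with law \(\target\); by the uniqueness of such hitting times it agrees a.s.\ with \(\tau\). Hence \(\tau\) solves \probref{OptStoppsiBt} for every such \(\psi\), and since the theorem forces \emph{every} optimizer to be such a hitting time, \(\tau\) is the a.s.\ unique optimizer. I expect the main obstacle, beyond the bookkeeping of moment exponents, to be translating the elementary monotonicity computations above into the precise secondary-optimization (``stop--go'') condition that appears in the statements of the main results, and confirming that the base cost of the first step genuinely triggers the existence theorem under nothing more than a finite first moment of \(\target\).
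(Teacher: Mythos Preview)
Your overall strategy matches the paper's: pick the cost \(c(\omega,t)=-\omega(t)A(t)\), apply the existence theorem via H\"older and BDG, apply the monotonicity principle with \(Y_t(\omega)=\omega(t)\), then invoke Loynes-type uniqueness. Your choice \(A(t)=t^p\), \(p<\tfrac12\), works; the paper instead takes a bounded strictly increasing \(A\) with \(p=0\), \(\varepsilon=\tfrac12\), which is marginally cleaner for the existence check but not essentially different.

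There is, however, one genuine gap. You write that the main theorem ``yields an optimizer \(\tau\) that a.s.\ coincides with the hitting time of a downwards barrier'' and that ``replacing that barrier by its upper semicontinuous envelope \dots\ changes the hitting time only off a null set.'' Neither of these is automatic. What \autoref{lem:geometry} actually gives is only the sandwich \(\taucl\le\tau\le\tauop\), where \(\taucl,\tauop\) are the hitting times of barriers that differ by at most one point in each time-slice and need not even be measurable. Closing this sandwich---showing that the hitting time of an arbitrary downwards barrier agrees a.s.\ with that of its closure---is the content of \autoref{lem:lessnasty}, and it is not a regularity triviality: the paper proves it by a Girsanov perturbation, comparing \(B\) with \(B+\varepsilon A\) for a bounded increasing \(A\) and letting \(\varepsilon\downarrow 0\). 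Without this lemma you cannot conclude that \(\tau\) is a barrier hitting time at all, let alone that \(\beta\) can be taken upper semicontinuous.

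A second, smaller issue (which you flag yourself): your monotonicity check computes \(\E[B_tA(t)\mid\G_s]-B_sA(s)\) and \(\partial_x\bigl(\E[\phi(B_t)\mid\G_s]-\phi(B_s)\bigr)\) for deterministic \(t>s\), but the Stop--Go condition concerns arbitrary stopping times \(\sigma\ge t\) with \(\P(\sigma>t)>0\). For the linear case the extension is immediate: the inequality collapses to \(\omega(t)\,\E[A(\sigma)-A(t)]<\eta(t)\,\E[A(\sigma)-A(t)]\), which holds since \(A\) is strictly increasing. For the \(\phi\)-case the paper avoids your heat-equation computation altogether and observes directly that \(y\mapsto\phi(\eta(t)+y)-\phi(\omega(t)+y)\) is strictly convex (because \(\phi'''>0\) and \(\eta(t)>\omega(t)\)), so strict Jensen applied to \(\tilde B_\sigma\) gives the Stop--Go inequality for every admissible \(\sigma\) at once. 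This is both shorter and sidesteps the question of whether your time-derivative argument survives replacing \(t\) by a stopping time.

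Finally, you do not mention enlarging the probability space to guarantee the independent uniform \(U\) of Assumption~\assref{ass:randomization}; the paper does this explicitly before invoking existence, and then argues that the optimizer on the enlarged space, once shown to be a barrier hitting time, descends to the original space.
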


To give an example of a slightly more complicated functional amenable to analysis with our tools consider

\begin{problem}[\textsc{OptStop}$^{\rmaxP B t}$]
\label{OptStopmax}
Among all stopping times \(\tau \sim \target\) on \( (\Omega,\G,\timeindexed{\G},\P ) \) find the maximizer of
\begin{align*}
\tau \mapsto \E[\rmaxP{B}{\tau}]\comma
\end{align*}
where \(\rmaxP B t = \sup_{s \leq t} B(s)\).
\end{problem}

\begin{corollary}
\label{cor:maxbarrier}
Assume that \(\target\) has finite moment of order \(\frac{3}{2}\). Then \probref{OptStopmax} has a solution \(\tau\) given by
\begin{align*}
\tau = \inf \left\{ t >0 : B_t - \rmaxP B t \leq \beta(t) \right\}
\end{align*}
for some upper semicontinuous function \(\beta : \Time \rightarrow [-\infty,0]\).%
\end{corollary}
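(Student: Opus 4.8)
The plan is to derive \autoref{cor:maxbarrier} from the abstract barrier theorem of \autoref{sec:results}, in the same spirit as \autoref{cor:Bbarrier}, but applied in an enlarged phase space that tracks the running maximum. First I would place \probref{OptStopmax} in the abstract framework: the cost process $Z_t=\rmaxP{B}{t}$ is continuous and $\timeindexed{\Fnull}$-adapted (indeed nondecreasing), so existence of an optimizer follows from the general existence statement once the integrability hypothesis is checked, and the standing assumption that $\target$ has a finite moment of order $\tfrac32$ is exactly what that hypothesis demands here. (Heuristically, $\rmaxP{B}{t}$ grows in $t$ like $\abs{B_t}$, while the abstract condition controls the relevant maximal functions --- of the cost and of the phase-space coordinates --- against a suitable moment of $\target$; matching exponents yields the exponent $\tfrac32$.)

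The conceptual core is the choice of phase space. I would use the pair consisting of the current drawdown and the current record, $\bigl(B_t-\rmaxP{B}{t},\ \rmaxP{B}{t}\bigr)$, together with time; this is a continuous, Markovian lift of Brownian motion of which $Z_t=\rmaxP{B}{t}$ is a coordinate. Feeding this into the barrier theorem produces a closed downwards barrier $\mathcal R$ in the space with coordinates (drawdown, record, time) whose first hitting time solves \probref{OptStopmax}. What remains is to show that $\mathcal R$ is a cylinder over the drawdown--time plane, i.e.\ does not depend on the record coordinate; this collapses $\mathcal R$ to a set of the form $\{(d,t):d\le\beta(t)\}$, which is the asserted shape of $\tau$. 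The mechanism is a shift argument at the level of the stop-go pairs entering the monotonicity principle: along any continuation running up to a time $\sigma$, the extra record accumulated after time $t$ equals $\bigl(\sup_{t\le u\le\sigma}(B_u-B_t)-(\rmaxP{B}{t}-B_t)\bigr)^{+}$, which depends on the future increments and the current drawdown but is invariant under adding a common constant to $B$ and to $\rmaxP{B}{}$; hence the stop-go comparison that builds $\mathcal R$ is blind to the record level. Since drawdowns are nonpositive, the threshold may be taken $[-\infty,0]$-valued, and because the barrier furnished by the abstract theorem is closed, the hypograph $\{d\le\beta(t)\}$ is closed as well, which is equivalent to upper semicontinuity of $\beta$.

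I expect the main obstacle to be twofold. First, one must verify that the specific cost $Z_t=\rmaxP{B}{t}$, read in this two-dimensional phase space, genuinely meets the structural hypotheses of the abstract barrier theorem --- in particular that the auxiliary "stop-go'' comparison is well posed and has the correct monotone sign in the drawdown variable --- and one must make the $\tfrac32$-moment assumption fit the abstract integrability requirement with no room to spare. Second, although the record-independence of $\mathcal R$ is transparent at the heuristic level of expected gains, it must be proved on stop-go \emph{pairs of paths} as they appear in the monotonicity principle, not merely in expectation. Once these points are secured, \autoref{cor:maxbarrier} follows at once from the results of \autoref{sec:results}.
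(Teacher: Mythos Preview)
Your proposal misses the central difficulty and therefore does not go through. The paper does not use a two-dimensional phase space (drawdown, record) and then collapse it; it works directly with the scalar process \(Y_t(\omega)=\omega(t)-\rmax\omega t\) in \autoref{lem:geometry}. But the real issue, which your outline glosses over as a routine ``verification'', is that for the cost \(\cost_1(\omega,t)=-\rmax\omega t\) the stop-go inequality \eqref{eq:SG} is \emph{not strict} in the drawdown variable. Concretely, writing \(\alpha\) for the law of \((\theta,\sigma(\theta))\) under \(\Blaw t 0\), the comparison reduces to
\[
\tsint \bigl(0\vee(\omega(t)-\rmax\omega t+\rmax\theta s)\bigr)\,d\alpha
\quad\text{vs.}\quad
\tsint \bigl(0\vee(\eta(t)-\rmax\eta t+\rmax\theta s)\bigr)\,d\alpha,
\]
and if \(\rmax\theta s\le \rmax\eta t-\eta(t)\) for \(\alpha\)-a.a.\ \((\theta,s)\) both sides equal~\(0\), so \(((\omega,t),(\eta,t))\notin\SG\) even though \(\omega(t)-\rmax\omega t<\eta(t)-\rmax\eta t\). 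Thus condition \eqref{eq:yaxis} of \autoref{lem:geometry} fails, and no amount of record-invariance argument repairs this.

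The paper's fix is a \emph{secondary optimization}: it uses the vector-valued cost \(\cost=(\cost_1,\cost_2)=\bigl(-\rmax\omega t,\,(\rmax\omega t-\omega(t))^3\bigr)\) with the lexicographic order on \(\R^2\), reruns the monotonicity principle for this order, and shows that whenever the primary comparison yields equality the secondary one is strict (this is where \(\tsint\sigma^{3/2}\,d\Blaw t 0<\infty\) is genuinely used, via BDG, to control \(\E[\theta(\sigma)^3]\) and to make optional stopping available). The \(\tfrac32\)-moment of \(\target\) is thus needed not just for well-posedness of the primary problem but to make the tie-breaking cost \(\cost_2\) finite. The proof also needs a small adaptation of \autoref{lem:lessnasty} to handle the case where the barrier \(\closure\Rcl\) meets the line \(\{0\}\times\Time\), since the drawdown process is reflected at~\(0\). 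Your outline contains none of these ingredients.
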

We emphasize that the solutions to the constrained optimal stopping problems provided in Corollaries \ref{cor:Bbarrier} and \ref{cor:maxbarrier} represent particular applications of the abstract results obtained below. Figure \ref{fig:Test} presents graphical depictions of stopping rules of several further solutions of constrained optimal stopping problems (together with the respective optimality properties). These stopping rules can be derived -- under suitable moment conditions -- using arguments very similar to those required for  Corollaries \ref{cor:Bbarrier} and \ref{cor:maxbarrier} (see also the comments  in \autoref{rem:otherpictures} at the end of the paper).
\begin{figure}[h]
\vspace{-2cm}
   \centering
       \includegraphics[page=1,width=0.95\textwidth]{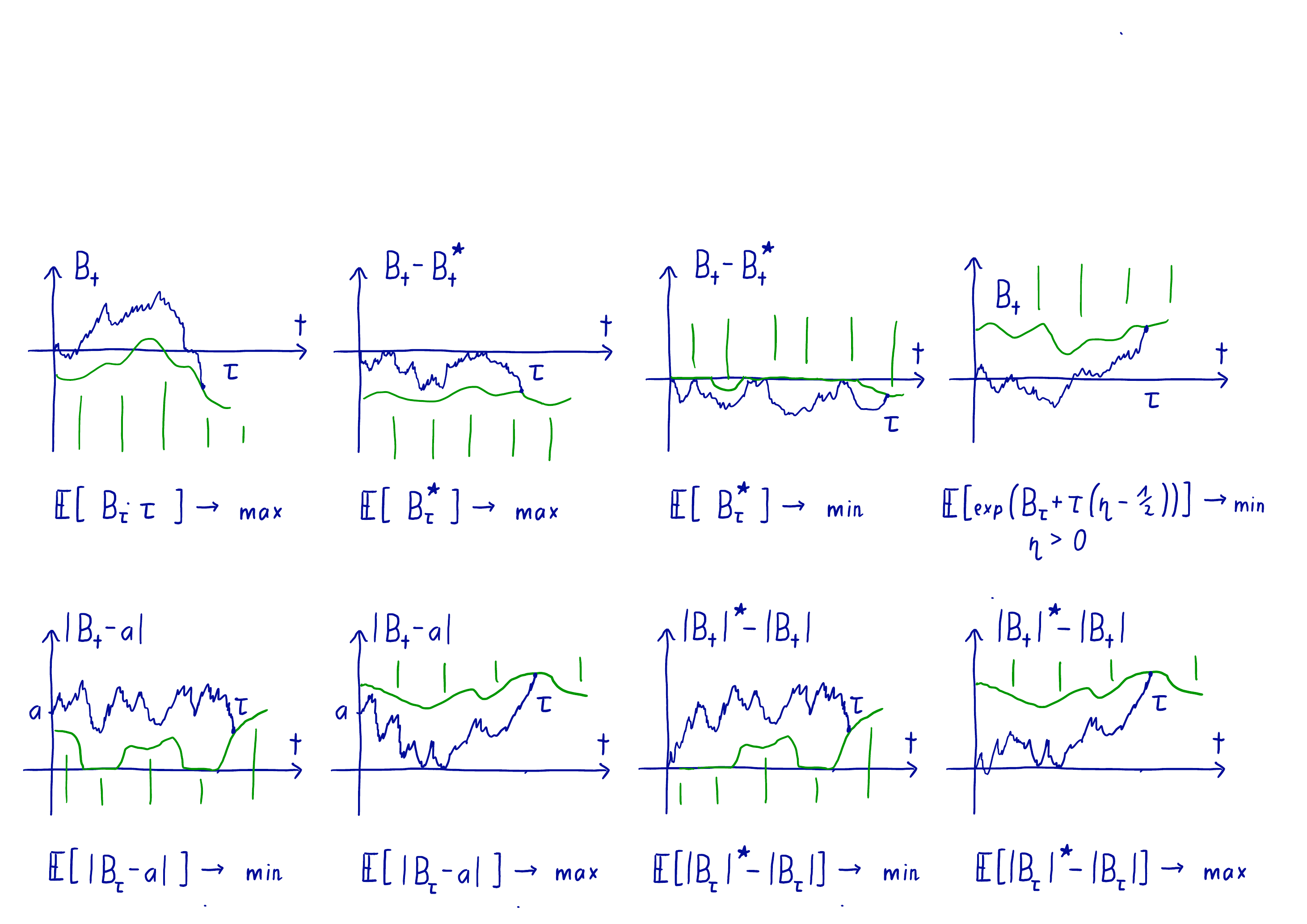} 
 \caption{Solutions to  constrained optimal stopping problems.}
\label{fig:Test}%
\end{figure}

\section{Background - Martingale Optimal Transport and Shiryaev's problem}

In this article we consider distribution-constrained stopping problems from a mass transport perspective. Specifically we find that problems of the form exemplified in  \probref{OptStoppsiBt} and 
\probref{OptStopmax} are amenable to techniques originally developed 
for the martingale version of the classical mass transport problem.
 This martingale optimal transport problem arises naturally in robust finance;  papers to investigate such problems include \cite{HoNe12,BeHePe12,GaHeTo12,DoSo12,CaLaMa14,GhKiLi16b,NuSt16}. In mathematical finance, transport techniques complement the Skorokhod embedding approach (see \cite{Ob04,Ho11} for an overview) to model-independent/robust finance.

A fundamental idea in optimal transport is that the optimality of a transport plan is reflected by the geometry of its support set which can be characterized using the notion of \emph{$c$-cyclical monotonicity}.  The relevance of this concept for the theory of optimal transport has been fully recognized by Gangbo and McCann \cite{GaMc96}, based on earlier work of Knott and Smith \cite{KnSm84} and R\"uschendorf \cite{Ru91,Ru95} among others. Inspired by these ideas, the  literature on martingale optimal transport has developed a `monotonicity principle' which allows to characterize martingale transport plans through geometric properties of their support sets,  cf.\ \cite{BeJu12,Za14,BeGr14,BCH,GuTaTo15b,BeNuTo16}. 

The main contribution of this article is to establish a monotonicity principle which is applicable  to distribution-constrained optimal stopping problems. This transport approach turns out to be remarkably powerful, in particular we will find that questions as raised in Problems \probref{OptStoppsiBt} and 
\probref{OptStopmax} can be addressed using a relatively intuitive set of arguments.

The distribution-constrained optimal stopping problem \probref{OptStop} (and specifically \probref{OptStopmax}) arises naturally in financial and actuarial mathematics. We refer the reader to \cite{Hi13} which describes various  examples (unit-linked life insurances, stochastic modelling for health insurances, the liquidation of an investment portfolio, the valuation of swing options).

Bayraktar and Miller \cite{distconstroptstop} consider the same optimization problem that we treat here. However their setup and methods are rather distinct from the ones used here:  they assume that the target distribution is given by finitely many atoms and that the target functional depends solely on the terminal value of Brownian motion. Following the measure valued martingale approach of Cox and K\"allblad \cite{CoKa15}, \cite{distconstroptstop} address the constrained optimal stopping problem using a Bellman perspective.

The problem to construct a stopping time $\tau$ of Brownian motion such that the law of $\tau$ matches a given distribution on the real line was proposed by Shiryaev in his Banach Center lectures in the 1970's, it has since been called Shiryaev's problem or inverse first passage problem. Dudley and Gutmann \cite{DuGu77} provide an abstract measure-theoretic construction. 
An early barrier-type solution to the inverse first passage problem was given by Anulova \cite{Anu}. She constructs a symmetric two-sided barrier (corresponding to the case $a=0$ in the sixth picture of \autoref{fig:Test}). Anulova discretises the measure $\mu$ and concludes through approximation arguments. 
The solution to the inverse first passage problem given in  Corollary \ref{cor:Bbarrier} was derived by Chen, Cheng,  and Chadam, and Saunders \cite{invfpdiff} based on a variational inequality which describes the corresponding barrier. Notably,  this is  predated by a (formal) PDE description of such barriers given by  Avellaneda and Zhu \cite{AvZh01} in the context of credit risk modeling. Ekstr\"om and Janson \cite{invfpdiff} relate this solution to an optimal stopping problem and provide an integral equation for the barrier. Analytic solutions to the inverse first passage problem are known only in a few cases (\cite{Br67,Le86,Sa88,Pe02,AlPa05,AlPa14}). An interesting connection between the inverse first passage problem and Skorokhod's problem is provided by Jaimungal, Kreinin,  and Valov \cite{JaKrVa14}.

\section{Statement of Main Results}
\label{sec:results}%

\begin{assumption}
\label{ass:general}
Throughout we will assume that \( (\Omega,\G,\timeindexed{\G},\P ) \) is a filtered probability space and that \( \timeindexed{B} \) is an adapted process which has continuous paths on \( ( \Omega,\G,\timeindexed{\G} ) \), such that \( B \) can be regarded as a measurable map from \(\Omega\) to \(\Paths\), the space of continuous functions from \(\Time\) to \(\R\).
The cost function \(\cost\) will always be a measurable map \(\Spacetime \rightarrow \R\).
\(\target\) will denote a probability measure on \(\Time\).
\end{assumption}

Then the problem we consider can be stated as follows.
\begin{problem}[\textsc{OptStop}]
\label{OptStop}
Among all stopping times $\tau \sim \target$ find the minimizer of
\begin{align*} \tau \mapsto \E[\cost(B,\tau)] \fullstop \end{align*}
\end{problem}
Here we formulate our main optimization problem in terms of minimization, following the usual convention in the optimal transport literature (which is also used in the closely related paper \cite{BCH}). Clearly, a sign change transforms this into a maximization problem and in our applications we will in fact  turn to this latter version  when resulting formulations appear more natural. We trust that this will not cause confusion. 

Throughout we will also make the following assumptions without further mention: 
\begin{assumption}
\label{ass:specific}\mbox{}
\begin{enumerate}
\item\label{ass:cadapted}
  \( \cost \) is \Sgood', where \(\timeindexed{\Fnull}\) is the filtration on \(\Paths\) generated by the canonical process \(\left(\omega \mapsto \omega(t)\right)_{t \in \Time{}} \).
\item\label{ass:randomization}
  There is a \( \G_0 \)-measurable random variable \( U \) which is uniformly distributed on \( [0,1] \) and independent of the process \( \timeindexed{B} \).
\item\label{ass:B}
  There is a probability measure $\lambda$ s.t.\ \( \timeindexed{B} \) is a Brownian motion with initial law \(\initial\), i.e.\ \(B_0 \sim \initial\).
\item\label{ass:wellposed}
  The problem is well-posed in the sense that \( \E[\cost(B,\tau)] \) is defined and \( > -\infty\) for all stopping times \( \tau \sim \target \) and that \( \E[\cost(B,\tau)] < \infty \) for at least one such stopping time.
\item\label{ass:finitemoment}
  \( \tsint t^{p_0} \d{\target}(t) < \infty\), where \(p_0 \geq 0\) is some constant that we fix here and that can be chosen when applying the results from this section.
\end{enumerate}
\end{assumption}

A note on language: The adjective \enquote{adapted} is usually applied to processes whose time argument is written in subscript form. For any filtered measurable space \(\tilde\Omega\) and any function \(f : \tilde\Omega \times \Time \rightarrow \R \) (or possibly \(f : \tilde\Omega \times \Time \rightarrow [-\infty,\infty] \)) we will interchangeably think of \(f\) simply as a function or as the process \( Y_t(\omega) := f(\omega,t) \). And so $f$ being adapted means the same thing as \((Y_t)_{t \in \Time}\) being adapted. Similarly for a subset \(\Gamma\) of \(\tilde\Omega \times \Time\) we may also think of \(\Gamma\) as its indicator function or as the process \(Y_t(\omega) := \indicator{\Gamma}(\omega,t)\) and will also say that the set \(\Gamma\) is adapted.

With that in mind, Assumption \assref{ass:cadapted} should seem like an obvious thing to ask for from the cost function. Also, knowing about the existence of optional projections, it should be clear no later than \autoref{lem:equivOpt} that Assumption \assref{ass:cadapted} does not pose a real restriction on the class of problems we are treating.

The role of Assumption \assref{ass:randomization} should become clearer soon.
We would like to note at this point though that often enough our results put together will imply that the solution of Problem \probref{OptStop} for a space \( (\Omega,\G,\timeindexed{\G},\P ) \) which satisfies Assumption \assref{ass:randomization} is essentially the same as the solution of the Problem for a space which may not satisfy said assumption, and we will find that we can describe this solution in detail.
This can be seen executed in the proofs of the corollaries stated in the \nameref{sec:appetizer}.

The methods in this paper work not just for Brownian motion but for a class of processes which is conceptually bigger, but then turns out to not include much beyond Brownian motion -- namely for any space-homogeneous but possibly time-inhomogeneous Markov process with continuous paths which has the strong Markov property. (Here space-homogeneous means that starting the process at location \(x\) and then moving its paths to start at location \(y\) results in a version of the process started at \(y\).) If the reader so wishes, she may think of \(B\) as a process from this slightly larger class of processes. Care was taken not to reference any properties of Brownian motion beyond those stated here. In particular our results apply to multi-dimensional Brownian motion. 

Assumption \assref{ass:wellposed} is mostly just there to ensure that we are actually talking about an optimization problem in a meaningful sense. For the problems presented in the \nameref{sec:appetizer}, the moment conditions on \(\target\) which are given in the statement of \autoref{cor:Bbarrier} and \autoref{cor:maxbarrier} ensure that Assumption \assref{ass:wellposed} is satisfied (as we will see in the proofs of these corollaries).

The constant \(p_0\) in Assumption \assref{ass:finitemoment} will (implicitly) appear in the statement of \autoref{thm:monotonicity}, one of the main results.
Its role is to ensure that \(\E[\varphi(B,\tau)]\) will be finite for some (class of) function(s) \(\varphi\) and any solution \(\tau\) of \probref{OptStop}. (The choice \(\varphi(B,\tau) = \tau^{p_0}\) is somewhat arbitrary here.)

The main results are \autoref{thm:existence} and \autoref{thm:monotonicity}.

We give two versions of \autoref{thm:existence}. Version A is easier to state and may feel more natural, but we will need Version B (which is more general and has essentially the same proof as Version A) in the proof of the corollaries in the Appetizer.

\begin{theorem}
\label{thm:existence}\textcolor{white}{.}

\textbf{Version A.}
Assume that the cost function \(\cost\) is bounded from below and lower semicontinuous when we equip \(\Paths\) with the topology of uniform convergence on compacts.
Then the Problem \probref{OptStop} has a solution.

\textbf{Version B.}
Assume that the cost function \(\cost\) is lower semicontinuous when we equip \(\Spacetime\) with the product topology of two Polish topologies which generate the right sigma-algebras on \(\Paths\) and \(\Time\) respectively and assume that the set
\( \left\{ \cost_-(B,\tau) : \tau \sim \target \text{, $\tau$ is a stopping time} \right\} \) is uniformly integrable, where \(\cost_- := -\cost \vee 0 \) denotes the negative part of \(\cost\).
Then the Problem \probref{OptStop} has a solution.
\end{theorem}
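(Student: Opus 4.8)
The plan is to use the direct method of the calculus of variations, after first relaxing \probref{OptStop} to a linear optimisation over the convex set of \emph{randomised stopping times}, in the spirit of the Skorokhod-embedding analysis of \cite{BCH}. Writing \(\Blaw{}{\initial}\) for the law of \(\timeindexed{B}\) on \(\Paths\), let \(\RSTy{\target}\) denote the set of randomised stopping times \(\xi\) --- probability measures on \(\Spacetime\) whose \(\Paths\)-marginal is \(\Blaw{}{\initial}\) and which satisfy the usual adaptedness constraint relative to \(\timeindexed{\Fnull}\) --- that additionally have \(\Time\)-marginal \(\target\). Two facts, both contained in \cite{BCH} up to the harmless extra constraint \(\tau\sim\target\), recast \probref{OptStop} as a problem over \(\RSTy{\target}\): (i) for any stopping time \(\tau\sim\target\) on \((\Omega,\G,\timeindexed{\G},\P)\) the image law \(\push{(B,\tau)}{\P}\) lies in \(\RSTy{\target}\) and \(\E[\cost(B,\tau)]=\int\cost\d{\xi}\); and (ii), conversely, using the independent uniform \(U\) from Assumption \assref{ass:randomization}, every \(\xi\in\RSTy{\target}\) equals \(\push{(B,\tau)}{\P}\) for an honest stopping time \(\tau\sim\target\) on \((\Omega,\G,\timeindexed{\G},\P)\) --- take \(\tau:=\inf\{t:A_t\geq U\}\), where \(A\) is the adapted increasing process obtained by disintegrating \(\xi\) along its \(\Paths\)-marginal. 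Consequently the infimum over competing stopping times equals \(\inf_{\xi\in\RSTy{\target}}\int\cost\d{\xi}\), and it suffices to produce a minimiser inside \(\RSTy{\target}\). Note \(\RSTy{\target}\neq\emptyset\) (e.g.\ \(\tau\) the quantile transform of \(\target\) applied to \(U\), which is \(\G_0\)-measurable and hence a stopping time), and that in both versions the infimum is finite: \(\cost\) bounded below (Version A), respectively uniform integrability of \(\{\cost_-(B,\tau):\tau\sim\target\}\) (Version B), gives a uniform lower bound \(\E[\cost(B,\tau)]\geq -C\), while Assumption \assref{ass:wellposed} supplies a competitor of finite value.

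For compactness, equip \(\Spacetime\) with the product of the two Polish topologies from the statement --- for Version A, uniform convergence on compacts on \(\Paths\) and the usual topology on \(\Time\), which are Polish and induce the correct \(\sigma\)-algebras --- and \(\mathcal P(\Spacetime)\) with the associated weak topology. Every \(\xi\in\RSTy{\target}\) has the two fixed marginals \(\Blaw{}{\initial}\) and \(\target\), both tight by Ulam's theorem; a family of probability measures on a product space all of whose marginals are tight is itself tight, so \(\RSTy{\target}\) is relatively weakly compact by Prokhorov's theorem. Since \(\RST\) is weakly closed --- this is where the structure of the adaptedness constraint enters, and is again an import from \cite{BCH} --- and the two marginal constraints are weakly closed as well, \(\RSTy{\target}\) is weakly compact.

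For lower semicontinuity of \(\xi\mapsto\int\cost\d{\xi}\) on \(\RSTy{\target}\), fix a weakly convergent sequence \(\xi_n\to\xi\) there and split \(\cost=\cost_+-\cost_-\) with \(\cost_+:=\cost\vee 0\). Since \(\cost_+\) is lower semicontinuous and nonnegative, the Portmanteau theorem gives \(\liminf_n\int\cost_+\d{\xi_n}\geq\int\cost_+\d{\xi}\); since \(\cost_-\) is upper semicontinuous and nonnegative and, by (i)--(ii), the Version B hypothesis says precisely that \(\cost_-\) is uniformly integrable over \(\RSTy{\target}\), the standard truncation argument upgrades the bounded-usc estimate to \(\limsup_n\int\cost_-\d{\xi_n}\leq\int\cost_-\d{\xi}\); adding the two gives \(\liminf_n\int\cost\d{\xi_n}\geq\int\cost\d{\xi}\). (In Version A, \(\cost_-\) is bounded, so uniform integrability is automatic and this is textbook.) A minimising sequence in \(\RSTy{\target}\) then has, by compactness, a weakly convergent subsequence whose limit is, by lower semicontinuity, a minimiser \(\hat\xi\in\RSTy{\target}\); pulling \(\hat\xi\) back to \((\Omega,\G,\timeindexed{\G},\P)\) through (ii) produces the sought optimal stopping time. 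Finally, Version A is the special case of Version B corresponding to the topologies above, together with the observation that boundedness below makes \(\{\cost_-(B,\tau)\}\) uniformly integrable, so it is enough to prove Version B.

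The only genuinely delicate step is erecting the relaxation: one must fix the right notion of randomised stopping time and transport from \cite{BCH} both the weak closedness of \(\RST\) and the realisation in (ii) of randomised stopping times as honest stopping times against \(U\), now in the presence of the \emph{distribution} constraint \(\tau\sim\target\) rather than the terminal-law constraint of Skorokhod embedding (and, for Version B, for the more general Polish topology). Once this is in place, tightness is immediate from the pinned marginals and the lower-semicontinuity argument is routine.
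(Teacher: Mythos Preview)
Your approach is essentially the same as the paper's: relax to randomised stopping times via \cite{BCH}, prove weak compactness of \(\RSTyd{\initial}{\target}\) from the fixed marginals and Prokhorov, prove lower semicontinuity of \(\xi\mapsto\int\cost\d\xi\), and pull the minimiser back through the external randomisation. Your lower-semicontinuity argument (splitting \(\cost=\cost_+-\cost_-\) and using that \(\cost_-\) is upper semicontinuous together with uniform integrability) is a harmless variant of the paper's (which truncates via \(\cost^N:=\cost\vee(-N)\) and shows \(\hat\cost^N\to\hat\cost\) uniformly on \(\RSTyd{\initial}{\target}\) by the same uniform integrability).

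There is one genuine loose end you flag but do not close. You equip \(\Spacetime\) with the Version~B Polish topology and then assert that \(\RST\) is weakly closed ``as an import from \cite{BCH}''. But the definition of \(\RST\) tests against functions \((\omega,r)\mapsto F(r)\big(G(\omega)-\cExp{s}{G}(\omega)\big)\) that are continuous for the topology of uniform convergence on compacts on \(\Paths\) and the usual topology on \(\Time\); they need not be continuous in an arbitrary Polish topology generating the right \(\sigma\)-algebras, so closedness of \(\RST\) in your chosen weak topology is not immediate. The paper's fix is short but essential: pass to the smallest topology on each factor that refines both the ``RST topology'' and the Version~B topology. This common refinement is still Polish with the same Borel sets, the RST test functions remain continuous (so \(\RSTyd{\initial}{\target}\) is closed in the associated weak topology), \(\cost\) remains lower semicontinuous, and tightness of \(\Blaw{0}{\initial}\) and \(\target\) (hence Prokhorov) still applies. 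Once you insert this refinement step, your argument is complete and matches the paper's.
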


To state \autoref{thm:monotonicity} we need a few more definitions.

\begin{remark}
We will find it convenient to talk about processes that don't start at time \(0\) but instead at some time \( t > 0 \). Similarly we will consider stopping times taking values in \(\lcro{t,\infty}\). These will be defined on the space \(\Paths[t]\) equipped with the filtration \(\timeindexed[s][t]{\F^t}\), again generated by the canonical process \(\left(\omega \mapsto \omega(s)\right)_{s \geq t} \).
We refer to the distribution of Brownian motion started at time \(t\) and location \(x\) by \(\Blaw{t}{x}\). This is a measure on \(\Paths[t]\). For a probability measure \(\kappa\) on \(\R\) we write \(\Blaw{t}{\kappa}\) for the distribution of Brownian motion started at time \(t\) with initial law \(\kappa\).
\end{remark}

\begin{definition}[Concatenation]
\label{def:concat}%
For every \(t \in \Time\) we have an operation \(\conc\) of concatenation, which is a map into \(\Paths[t]\) and is defined for \((\omega,s) \in \Spacetime[t]\) and \(\theta \in \Ct[s]\) with \(\theta(s)=0\) by
\begin{align}
\label{eq:concat}
\left((\omega,s) \conc \theta\right)(r) = 
\begin{cases}
\omega(r) & t\leq r \leq s \\
\omega(s) + \theta(r) & r > s
\end{cases}\fullstop
\end{align}
\end{definition}

\begin{definition}[Stop-Go pairs]
\label{def:SG}
The set of Stop-Go pairs \(\SG \subseteq \left(\Spacetime\right) \times \left(\Spacetime\right)\) is defined as the set of all pairs \( ((\omega,t),(\eta,t)) \) (note that the time components have to match) such that
\begin{align}
\label{eq:SG}
\cost(\omega,t) + \tsint \cost((\eta,t) \conc \theta, \sigma(\theta)) \d{\Blaw{t}{0}}(\theta) < \cost(\eta,t) + \tsint \cost((\omega,t) \conc \theta, \sigma(\theta)) \d{\Blaw{t}{0}}(\theta)
\end{align}
for all \(\timeindexed[s][t]{\F^t}\)-stopping times \(\sigma\) for which \(\Blaw{t}{0}(\sigma = t) < 1\), \(\Blaw{t}{0}(\sigma = \infty) = 0\), \(\tsint \sigma^{p_0} \d{\Blaw{t}{0}} < \infty\) and for which both sides in \eqref{eq:SG} are defined and finite.
\end{definition}

\begin{figure}[h]
\vspace{-6cm}
   \centering
\mbox{$ $}\hspace{-1.5cm}       \includegraphics[page=1,width=1.1\textwidth]{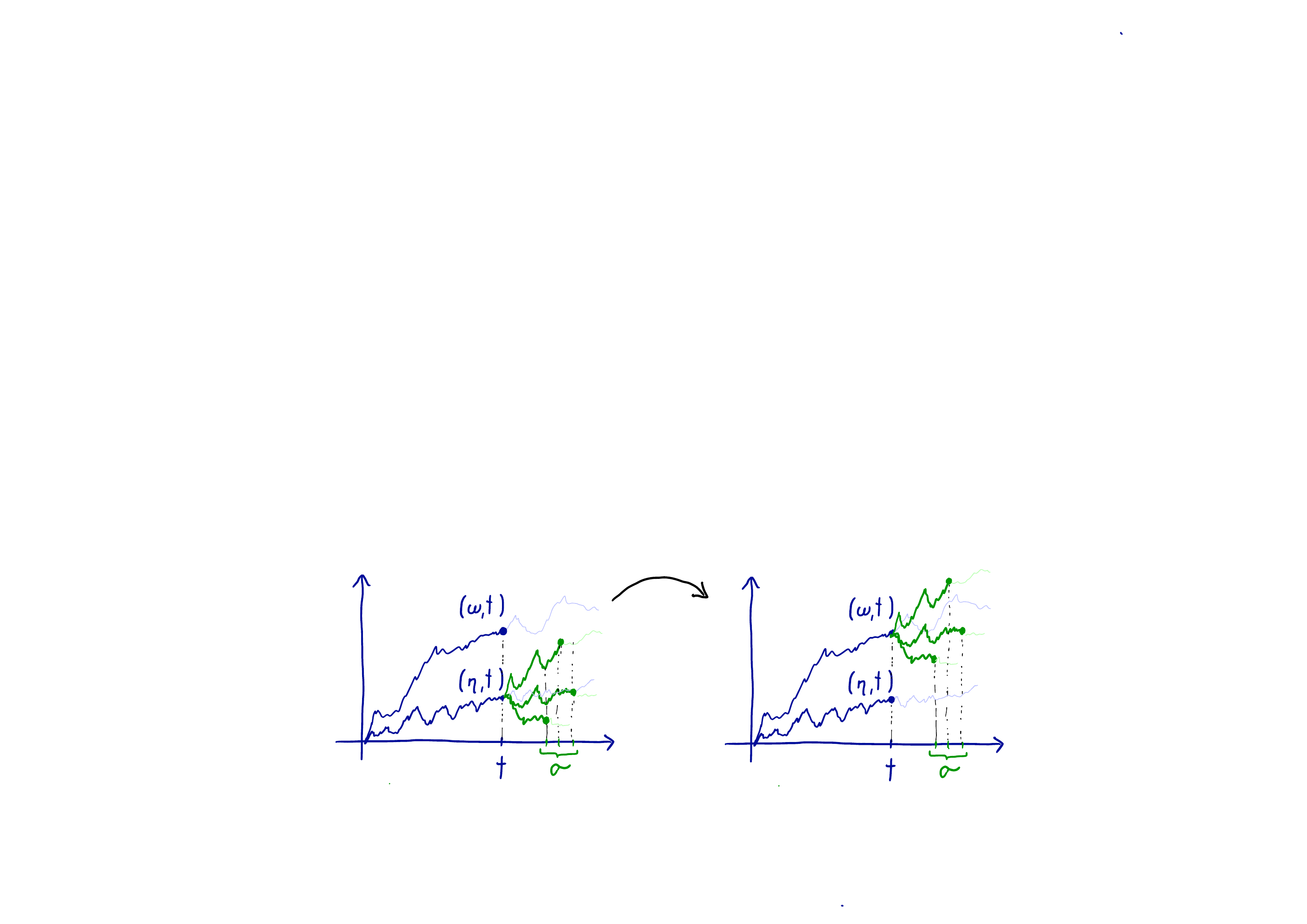} \hspace{-1cm}
       \vspace{-1.5cm}
 \caption{The left hand side of \eqref{eq:SG} corresponds to averaging the function $c$ over the stopped paths on the left picture, the right hand side to averaging the function $c$ over the stopped paths on the right picture.}
 \label{fig:StopGo}
\end{figure}

A hopefully intuitive way of putting the definition of Stop-Go pairs into words is the following: $((\omega,s),(\eta,t))$ form a Stop-Go pair iff, irrespective of how we might stop after time $t$ (i.e. which stopping rule $\sigma$ we might use after time $t$), \emph{Stop}ping $\omega$ at time $t$ and letting $\eta$ \emph{Go} on is better -- i.e.\ has lower cost -- than stopping $\eta$ and letting $\omega$ go on.

As hinted at earlier, the definition of Stop-Go pairs depends on the parameter \(p_0\) from Assumption \assref{ass:finitemoment}. A larger \(p_0\) means that we are asking for more in Assumption \assref{ass:finitemoment} and implies that we get a larger set \(\SG\), as we are quantifying over fewer stopping times \(\sigma\) in the definition of \(\SG\). This in turn implies that the conclusion of \autoref{thm:monotonicity} below will be stronger.

\begin{definition}[Initial Segments]
\label{def:init}
For a set \(\Gamma \subseteq \Spacetime\) define the set \(\init\Gamma \subseteq \Spacetime\) by
\begin{align}
\label{eq:init}
\init{\Gamma} = \left\{ (\omega,s) : (\omega,t) \in \Gamma \text{ for some } t > s \right\} \fullstop
\end{align}
\end{definition}

\begin{theorem}[Monotonicity Principle]
\label{thm:monotonicity}
Assume that \( \tau \) solves \probref{OptStop}. Then there is a \Sgood' set \( \Gamma \subseteq \Spacetime \) such that \[ \P[ (\timeindexed{B},\tau) \in \Gamma ] = 1 \] and
\begin{align}
\label{eq:monotonicity}
\SG \cap \left(\init{\Gamma} \times \Gamma\right) = \emptyset \fullstop
\end{align}
\end{theorem}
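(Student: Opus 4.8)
### Proof Strategy for the Monotonicity Principle

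The plan is to follow the by-now-standard ``competitor'' paradigm from (martingale) optimal transport, adapted to the stopping-time setting: assume for contradiction that no such set $\Gamma$ exists, extract from this failure a Stop-Go pair sitting in $\init{\Gamma}\times\Gamma$ with positive probability, and then use that pair to build a strictly better competitor stopping time, contradicting optimality of $\tau$. The key conceptual point is that a Stop-Go pair $((\omega,t),(\eta,t))$ records exactly the local rearrangement we are allowed to perform: swap the future behaviour of the two paths after time $t$ (stop the $\omega$-path, let the $\eta$-path run, using whatever post-$t$ rule it would have used on the other branch). Because both paths are at the same time $t$ and $B$ is space-homogeneous Markov with the strong Markov property, after shifting to the common value this swap is measure-preserving on $\P$, hence produces a genuine stopping time again with distribution $\target$; but by the defining strict inequality \eqref{eq:SG} it has strictly smaller expected cost.

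Concretely, first I would set up the transport-theoretic reformulation: a stopping time $\tau\sim\target$ on a space satisfying Assumption \assref{ass:randomization} corresponds (via the randomization variable $U$ and the optional-projection machinery alluded to around \autoref{lem:equivOpt}) to a coupling-type object, and optimality of $\tau$ becomes a statement that a certain measure $\pi$ on $\Spacetime$ (the law of $(B,\tau)$, or rather a randomized/measure-valued lift of it) is cost-minimal among all such objects. Second, I would prove the contrapositive by a measurable-selection argument: if for \emph{every} full-measure adapted set $\Gamma$ the intersection $\SG\cap(\init{\Gamma}\times\Gamma)$ is nonempty, then one can build a ``bad'' set of Stop-Go pairs that is charged with positive mass by the relevant product measure built from $\pi$ — this is where one invokes a Kuratowski--Ryll-Nardzewski / von Neumann type selection theorem to get a measurable family of such swaps, and uses that $\SG$ itself is a Borel (or at least analytic) subset of $(\Spacetime)^2$ so that the selection is legitimate. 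Third, perform the swap: define the competitor $\tau'$ which on the ``$\omega$-branch'' stops at time $t$ and on the ``$\eta$-branch'' continues according to the transplanted rule $\sigma$, using $U$ to decide on which branch a given $\omega$ lies so that adaptedness is preserved; check $\tau'\sim\target$ (the marginal in time is unchanged because we only permute mass between paths sharing the same time coordinate) and $\E[\cost(B,\tau')]<\E[\cost(B,\tau)]$ by integrating \eqref{eq:SG}, with Assumption \assref{ass:wellposed} and \assref{ass:finitemoment} ensuring all the conditional expectations are finite and the stopping times $\sigma$ that appear are admissible in the definition of $\SG$.

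The main obstacle — and the step I expect to absorb most of the technical work — is the measurable selection combined with the gluing: one must carefully disintegrate the law of $(B,\tau)$ over the $\sigma$-algebra $\F^0_t$ at the (random) stopping time's value, recognize the post-$t$ conditional law of the path as $\Blaw{t}{\cdot}$ by the strong Markov property, and verify that the ``improved'' kernel one writes down is again the disintegration of a bona fide stopping time on the original filtered space rather than on an enlarged space. The role of the randomization variable $U$ in Assumption \assref{ass:randomization} is precisely to have enough room to realize the rearranged (and a priori only randomized) stopping rule as an honest stopping time; making this rigorous, together with checking that the ``exceptional'' set where the swap is performed can be taken adapted and of positive measure, is the crux. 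A secondary subtlety is ensuring the competitor's cost is genuinely \emph{strictly} smaller and not merely $\le$: this is handled by first passing to a subset of the bad Stop-Go pairs on which the gap in \eqref{eq:SG} is bounded below by some $\varepsilon>0$, which has positive mass by countable exhaustion. Once these pieces are in place, optimality of $\tau$ is contradicted, so a set $\Gamma$ with \eqref{eq:monotonicity} must exist; taking it adapted and measurable is then immediate from the construction.
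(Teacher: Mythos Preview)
Your overall strategy is correct and is essentially the paper's own: reformulate in terms of randomized stopping times (a coupling-type object on $\Spacetime$), show that any ``joining'' which charges the bad set can be used to construct a strictly better competitor (this is the paper's \autoref{Invisible2}), and then deduce the existence of $\Gamma$ from a Kellerer-type result (the paper's \autoref{KeLe}, imported from \cite{BCH}). The competitor is built by transplanting post-$t$ branches between the two paths in a Stop-Go pair, exactly as you describe; the paper packages this as the \nameref{lem:gardener}.

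There is, however, one genuine gap in your plan. You write that the selection step ``uses that $\SG$ itself is a Borel (or at least analytic) subset of $(\Spacetime)^2$''. This is not known to hold: $\SG$ is defined by a \emph{universal} quantifier over all $\timeindexed[s][t]{\F^t}$-stopping times $\sigma$, so it is a priori only co-analytic in flavour and need not be Borel or analytic. The paper is explicit about this (``the sets $\SG^\xi$ and $\SGw$ are measurable (in contrast to $\SG$, which may be more complicated)''), and it is precisely to circumvent this obstacle that the paper introduces the \emph{relative} Stop-Go pairs $\SG^\xi$ (\autoref{def:SGxi}): instead of quantifying over all $\sigma$, one tests only against the conditional continuation $\xi^{(f,t)}$ of the given optimizer, which makes $\SG^\xi$ measurable. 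One then proves $\SG' \subseteq \SGw$ (\autoref{lem:SGsubSGW}), runs the entire competitor/selection argument for the measurable set $\SGw$ to obtain $\Gamma$ with $\SGw \cap (\init\Gamma \times \Gamma) = \emptyset$, and only at the very end deduces the same for $\SG$ by inclusion. Without this detour through a measurable surrogate for $\SG$, neither your measurable-selection step nor the reduction ``no $\Gamma$ exists $\Rightarrow$ some product measure charges $\SG$'' can be justified.

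A secondary point: your plan to realize the competitor directly as a stopping time $\tau'$ using $U$ is workable but awkward; the paper stays entirely in the randomized-stopping-time (measure) picture, constructs the competitor $\xi^\pi = \tfrac12(\xi^\pi_0 + \xi^\pi_1)$ via the \nameref{lem:gardener}, and checks $\push{(\proj{\Time})}{\xi^\pi} = \target$ directly from $s=t$ on the support of $\bar\pi$. Passing back to an actual stopping time is deferred to \autoref{lem:equivOpt} and is needed only once, at the very end.
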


The following lemma should give a first hint about how the \nameref{thm:monotonicity} can be applied.

\newcommand{\taucl}{\underaccent{\check}\tau}
\newcommand{\tauop}{\hat\tau}
\newcommand{\Rcl}{\underaccent{\check}{\mathcal{R}}\vphantom{\hat{\mathcal{R}}}}
\newcommand{\Rop}{\hat{\mathcal{R}}}
\begin{lemma}
\label{lem:geometry}
Let \(\tau\) be a solution of \probref{OptStop} and assume that the cost function \(\cost\) is such that there exists a \Sgood' process \(\timeindexed{Y}\) such that
\begin{align}
\label{eq:yaxis}
Y_t(\omega) < Y_t(\eta) \implies \left((\omega,t),(\eta,t)\right) \in \SG \fullstop
\end{align}
Define the \emph{barriers} \(\Rcl,\Rop \subseteq \R \times \Time\) by
\begin{align*}
\Rcl & = \bigcup_{(\omega,t) \in \Gamma} \lorc{-\infty,Y_t(\omega)} \times \{t\} \\
\Rop & = \bigcup_{(\omega,t) \in \Gamma} \left(-\infty,Y_t(\omega)\right) \times \{t\} \comma
\end{align*}
where \(\Gamma\) is a set with the properties in \autoref{thm:monotonicity}.
Define the functions \(\taucl\) and \(\tauop\) on \(\Paths\) by
\begin{align*}
\taucl(\tilde\omega) & = \inf \left\{ t \in \Time : \left(Y_t(B(\tilde\omega)),t\right) \in \Rcl \right\} \\
\tauop(\tilde\omega) & = \inf \left\{ t \in \Time : \left(Y_t(B(\tilde\omega)),t\right) \in \Rop \right\} \fullstop
\end{align*}
Then
\begin{align}
\label{eq:geometry}
\taucl \leq \tau \leq \tauop \quad \P\text{-a.s.}
\end{align}
\end{lemma}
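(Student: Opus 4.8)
The plan is to work $\P$-almost surely on the event $N := \{\tilde\omega : (B(\tilde\omega),\tau(\tilde\omega)) \in \Gamma\}$, which satisfies $\P(N)=1$ by \autoref{thm:monotonicity}, and to prove the two inequalities in \eqref{eq:geometry} separately, since they come from genuinely different sources: the lower bound $\taucl \le \tau$ is built into the definition of the closed barrier, whereas the upper bound $\tau \le \tauop$ is exactly where the relation $\SG \cap (\init\Gamma \times \Gamma) = \emptyset$ is used.

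For the lower bound, fix $\tilde\omega \in N$ and abbreviate $\omega := B(\tilde\omega)$, $t := \tau(\tilde\omega)$. Since $(\omega,t) \in \Gamma$, the slice $\lorc{-\infty,Y_t(\omega)} \times \{t\}$ is one of the slices in the union defining $\Rcl$, and it contains $(Y_t(\omega),t)$ because $Y_t(\omega) \le Y_t(\omega)$; hence $(Y_t(\omega),t) \in \Rcl$, so $t$ belongs to the set of times $r$ with $(Y_r(\omega),r) \in \Rcl$, and therefore $\taucl(\tilde\omega) \le t = \tau(\tilde\omega)$. The use of the \emph{closed} barrier here is precisely what allows the boundary value $Y_t(\omega)$ to be included.

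For the upper bound I would argue by contradiction. Suppose $\tau(\tilde\omega) > \tauop(\tilde\omega)$ for some $\tilde\omega \in N$. Since $\tauop(\tilde\omega)$ is an infimum and is strictly below $\tau(\tilde\omega)$, there is a time $s$ with $s < \tau(\tilde\omega)$ and $(Y_s(\omega),s) \in \Rop$; unravelling the definition of $\Rop$ yields a pair $(\eta,s) \in \Gamma$ with $Y_s(\omega) < Y_s(\eta)$. On the other hand, $(\omega,\tau(\tilde\omega)) \in \Gamma$ together with $\tau(\tilde\omega) > s$ gives $(\omega,s) \in \init\Gamma$ by \autoref{def:init}. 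Now hypothesis \eqref{eq:yaxis}, applied to the strict inequality $Y_s(\omega) < Y_s(\eta)$, shows $((\omega,s),(\eta,s)) \in \SG$; combined with $(\omega,s) \in \init\Gamma$ and $(\eta,s) \in \Gamma$ this gives $((\omega,s),(\eta,s)) \in \SG \cap (\init\Gamma \times \Gamma)$, contradicting \eqref{eq:monotonicity}. Hence $\tau(\tilde\omega) \le \tauop(\tilde\omega)$ for every $\tilde\omega \in N$, and putting the two bounds together yields \eqref{eq:geometry} on $N$, i.e.\ $\P$-a.s.

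I do not expect a genuine obstacle once \autoref{thm:monotonicity} is available — the argument is short and essentially set-theoretic. The two points requiring a little care are: (i) the infima defining $\taucl$ and $\tauop$ need not be attained, so in the upper-bound step one must pick the auxiliary time $s$ strictly smaller than $\tau(\tilde\omega)$, which is possible exactly because the contradiction hypothesis gives $\tauop(\tilde\omega) < \tau(\tilde\omega)$, rather than taking $s = \tauop(\tilde\omega)$; and (ii) one must keep the pairing between the closed/open barriers and the non-strict/strict inequalities straight, since this asymmetry is what produces the two-sided conclusion $\taucl \le \tau \le \tauop$ and in general prevents upgrading it to an equality. Measurability of $\taucl$ and $\tauop$ is not needed for the claimed almost-sure inequality, although it follows from the standing measurability and adaptedness hypotheses on $Y$ and $\Gamma$.
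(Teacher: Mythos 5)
Your proposal is correct and follows essentially the same route as the paper's own proof: the lower bound comes directly from $(B(\tilde\omega),\tau(\tilde\omega))\in\Gamma$ placing $(Y_{\tau(\tilde\omega)}(B(\tilde\omega)),\tau(\tilde\omega))$ in $\Rcl$, and the upper bound from the observation that any $s<\tau(\tilde\omega)$ with $(Y_s(B(\tilde\omega)),s)\in\Rop$ would produce, via \eqref{eq:yaxis}, a pair in $\SG\cap(\init\Gamma\times\Gamma)$, contradicting \eqref{eq:monotonicity}. The only cosmetic difference is that the paper proves the contrapositive (every $t$ with $(Y_t(B(\tilde\omega)),t)\in\Rop$ satisfies $t\ge\tau(\tilde\omega)$) rather than arguing by contradiction from $\tauop(\tilde\omega)<\tau(\tilde\omega)$, which is logically the same step.
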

When applying this Lemma to show that some optimal stopping problem has a barrier-type solution as symbolized for example by the pictures in \autoref{fig:Test} the process $Y_t(B)$ is of course what we are labelling the vertical axes in the pictures with.
So for the first picture $Y_t(\omega) = \omega(t)$, for the second one $Y_t(\omega) = \omega(t) - \sup_{s \leq t}\omega(s)$, for the third $Y_t(\omega) = -(\omega(t) - \sup_{s \leq t}\omega(s))$ (the sign is flipped relative to the labelling in the picture because in this picture the barrier is drawn \enquote{up} instead of \enquote{down}), etc.

Notice that, contrary to customs, when we draw the barriers \(\Rcl\)/\(\Rop\) in the pictures in \autoref{fig:Test} the first coordinate is the vertical axis and the second coordinate is the horizontal axis. This is because, to make cross-referencing and comparison with \cite{BCH} easier, we follow their convention of always having time as the second coordinate but still in the pictures it seems more natural to put the independent variable on the horizontal axis.

Note that a priori \(\taucl\) and \(\tauop\) need not be stopping times or even measurable, as we don't know much about the sets \(\Rcl\) and \(\Rop\).

Using the properties of a concrete process \(\timeindexed{Y}\) we will in the proofs of Corollaries \ref{cor:Bbarrier} and \ref{cor:maxbarrier} be able to show that \(\taucl = \tauop\) a.s.\ (this should not be surprising as for each time \(t\) the barriers \(\Rcl\) and \(\Rop\) differ by at most a single point) and therefore that the optimizer \(\tau\) is the hitting time of a barrier.

\begin{proof}[Proof of Lemma \ref{lem:geometry}]
Let \(\tilde\omega \in \Omega\) s.t.\ \(\left(B(\tilde\omega),\tau(\tilde\omega)\right) \in \Gamma\). By assumption this holds for \(\P\)-a.a.\ \(\tilde\omega\).
Then \(\left(Y_{\tau(\tilde\omega)}(B(\tilde\omega)),\tau(\tilde\omega)\right) \in \Rcl\) and therefore \(\taucl(\tilde\omega) \leq \tau(\tilde\omega)\).

Next we show that \(\tauop(\tilde\omega) \geq \tau(\tilde\omega)\).
Assume that \(\left(Y_t(B(\tilde\omega)),t\right) \in \Rop\). We want to show that \(t \geq \tau(\tilde\omega)\).
By the definition of \(\Rop\) we find that there is \(\eta \in \Paths\) with \((\eta,t) \in \Gamma\) and \(Y_t(B(\tilde\omega)) < Y_t(\eta)\), so by \eqref{eq:yaxis} we know \(\left((B(\tilde\omega),t),(\eta,t)\right) \in \SG\).
Assuming, if possible, \(t < \tau(\tilde\omega)\) we get according to \autoref{def:init} that \((B(\tilde\omega),t) \in \init{\Gamma}\). Therefore we have that \(\left((B(\tilde\omega),t),(\eta,t)\right) \in \SG \cap \left(\init\Gamma \times \Gamma\right)\), but this is a contradiction to \(\SG \cap \left(\init\Gamma \times \Gamma\right) = \emptyset\), so we must have \(t \geq \tau(\tilde\omega)\).
\end{proof}

\begin{remark}[Duality]
Problem \probref{OptStop} is an infinite-dimensional linear programming problem and one would hence expect that a corresponding dual problem can be formulated. Indeed, assuming that $c$ is lower semicontinuous and bounded from below, the value of the optimization problem equals 
\[ \sup_{M,\psi} \E[M_0]+ \tsint \psi\, d\mu, \]
where the supremum  is taken over  bounded $\timeindexed\G$-martingales $M=(M_t)_{t\geq 0}$ and  bounded continuous functions $\psi : \R_+\to \R$ satisfying (up to evanescence)
\[ M_t+\psi(t)\leq c(B,t)\fullstop\]
This can be established in complete analogy to the duality result derived in \cite[Theorem 1.2 / Section 4.2]{BCH}  and we do not elaborate.
\end{remark}

\section{Digesting the Appetizer}
\label{sec:digesting}%

We will now demonstrate how to use the Monotonicity Principle of \autoref{thm:monotonicity} to derive \autoref{cor:Bbarrier}. The proof of \autoref{cor:maxbarrier} is very similar but relies on understanding a technical detail which does not add much to the story at this point, so we leave it for the end of the paper.

\newcommand{\Barrier}{\mathcal{R}}
Both of the sets \(\Rcl\) and \(\Rop\) in \autoref{lem:geometry} have the property that (writing \(\Barrier\) for the set in question) \((y,t) \in \Barrier\) and \(y' \leq y\) implies \((y',t) \in \Barrier\). We call such sets (downwards) barriers. More specifically, for technical reasons in what follows it is slightly more convenient to talk about subsets of \(\extR \times \Time\) instead of subsets of \( \R \times \Time \), giving the following definition.
\begin{definition}
Let \(X\) be a topological space. A \emph{downwards barrier} is a set \(\Barrier \subseteq \extR \times X\) such that \(\{-\infty\} \times X \subseteq \Barrier\) and
\begin{align*}
(y,t) \in \Barrier \text{ and } y' \leq y \text{ implies } (y',t) \in \Barrier
\end{align*}
\end{definition}

Clearly, in \autoref{lem:geometry}, instead of talking about \(\Rcl \subseteq \R \times \Time\), we could have talked about \(\Rcl \cup (\{-\infty\} \times \Time) \subseteq \extR \times \Time\) without anything really changing, and likewise for \(\Rop\).

The reader will easily verify the following lemma.

\begin{lemma}
Let \(X\) be a topological space. There is a bijection between the set of all upper semicontinuous functions \(\beta : X \rightarrow \loneextR\) and the set of all \emph{closed} downwards barriers \(\Barrier \subseteq \extR \times X\) (where closure is to be understood in the product topology). This bijection maps any upper semicontinuous function \(\beta\) to the barrier \(\Barrier\) which is the hypograph of \(\beta\)
\begin{align*}
\Barrier := \left\{ (y,x) : y \leq \beta(x) \right\} \comma
\end{align*}
while the inverse maps a barrier \(\Barrier\) to the function \(\beta\) given by
\begin{align*}
\beta(x) := \sup \left\{ y : (y,x) \in \Barrier \right\} \fullstop
\end{align*}
\end{lemma}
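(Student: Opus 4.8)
The plan is to construct the two maps in the statement, check that each lands in the claimed set, and verify they are mutually inverse. All of this is elementary point-set topology once the right observations are made; the only real content is bookkeeping about suprema and closedness.

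First I would fix notation: given an upper semicontinuous \(\beta : X \to \loneextR\), set \(\Barrier_\beta := \{(y,x) \in \extR \times X : y \le \beta(x)\}\). It is immediate that \(\{-\infty\} \times X \subseteq \Barrier_\beta\) (since \(-\infty \le \beta(x)\) always) and that \((y,x) \in \Barrier_\beta\), \(y' \le y\) forces \(y' \le \beta(x)\), i.e.\ \(\Barrier_\beta\) is a downwards barrier. For closedness: suppose \((y_n,x_n) \to (y,x)\) in the product topology with \(y_n \le \beta(x_n)\). If \(y = -\infty\) we are done since \(\{-\infty\}\times X \subseteq \Barrier_\beta\). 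Otherwise, pick any \(c > \beta(x)\) (if \(\beta(x) = +\infty\) there is nothing to prove, so assume \(\beta(x) < \infty\)); by upper semicontinuity \(\{x' : \beta(x') < c\}\) is an open neighbourhood of \(x\), so \(\beta(x_n) < c\) eventually, hence \(y_n < c\) eventually, hence \(y \le c\); letting \(c \downarrow \beta(x)\) gives \(y \le \beta(x)\). So \(\Barrier_\beta\) is closed. One should keep in mind that \(\extR = \loneextR\) carries its order topology (the two-point compactification of \(\R\)), with respect to which these neighbourhood arguments work at \(\pm\infty\) as well; I would add a sentence pinning that down.

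Next, given a closed downwards barrier \(\Barrier\), define \(\beta_\Barrier(x) := \sup\{y : (y,x) \in \Barrier\}\), with the convention \(\sup\) taken in \(\loneextR\) (the set is nonempty since \(-\infty\) always qualifies, so \(\beta_\Barrier(x) \ge -\infty\)). I claim \(\beta_\Barrier\) is upper semicontinuous, i.e.\ \(\{x : \beta_\Barrier(x) \ge a\}\) is closed for every \(a \in \loneextR\); equivalently, if \(x_n \to x\) and \(\beta_\Barrier(x_n) \ge a\) then \(\beta_\Barrier(x) \ge a\). Here the key point is that \(\beta_\Barrier(x) \ge a\) is equivalent to \((a',x) \in \Barrier\) for all \(a' < a\): indeed if \(\beta_\Barrier(x) \ge a\) and \(a' < a\) then by definition of sup there is \(y \ge a'\) with \((y,x)\in\Barrier\), and the barrier property gives \((a',x)\in\Barrier\); conversely if \((a',x)\in\Barrier\) for all \(a'<a\) then \(\beta_\Barrier(x) \ge \sup_{a'<a} a' = a\). (A small care point: when \(a = -\infty\) the statement is vacuous, and one should check the equivalence also handles \(a' = -\infty\) and the endpoint \(a=+\infty\).) Now given \(x_n \to x\) with \(\beta_\Barrier(x_n)\ge a\): fix \(a' < a\); then \((a',x_n) \in \Barrier\) for all \(n\), and \((a',x_n) \to (a',x)\), so by closedness \((a',x) \in \Barrier\); since \(a' < a\) was arbitrary, \(\beta_\Barrier(x) \ge a\). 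Hence \(\beta_\Barrier\) is upper semicontinuous.

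Finally I would check the two composites are identities. For \(\beta_{\Barrier_\beta}(x) = \sup\{y : y \le \beta(x)\} = \beta(x)\) — immediate. For \(\Barrier_{\beta_\Barrier} = \Barrier\): the inclusion \(\Barrier \subseteq \Barrier_{\beta_\Barrier}\) is trivial (if \((y,x)\in\Barrier\) then \(y \le \beta_\Barrier(x)\) by definition of sup). For the reverse, take \((y,x) \in \Barrier_{\beta_\Barrier}\), i.e.\ \(y \le \beta_\Barrier(x)\); if \(y < \beta_\Barrier(x)\) then by definition of sup there is \(y' \ge y\) with \((y',x)\in\Barrier\), so \((y,x)\in\Barrier\) by the barrier property; if \(y = \beta_\Barrier(x)\) we need closedness: choose \(y_n \uparrow \beta_\Barrier(x)\) with \((y_n,x)\in\Barrier\) (again using the definition of sup, taking \(y_n = -\infty\) eventually acceptable if \(\beta_\Barrier(x) = -\infty\)), then \((y_n,x) \to (y,x)\) and closedness gives \((y,x)\in\Barrier\). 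This establishes the bijection.

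I do not expect any genuine obstacle here; the statement is flagged as "the reader will easily verify." The only places to be careful are (i) being explicit that \(\extR\) carries the compact order topology so that neighbourhood/convergence arguments near \(\pm\infty\) are legitimate, and (ii) the degenerate cases \(\beta(x) \in \{-\infty, +\infty\}\) and \(a \in \{-\infty, +\infty\}\) in the semicontinuity equivalences — these are the spots where a hasty proof would have a gap. Given the tone of the surrounding text, a compressed version recording the definitions of the two maps and the four verifications (barrier + closed; usc; the two composite identities) is what I would actually write.
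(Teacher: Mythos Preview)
The paper omits the proof entirely, noting only that ``the reader will easily verify'' the lemma; your argument supplies exactly the routine verification the authors had in mind and is substantively correct.

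One technical point deserves tightening: the lemma is stated for an arbitrary topological space \(X\), but your closedness and upper-semicontinuity checks run via sequences \((y_n,x_n)\to(y,x)\) and \(x_n\to x\). In a non-metrizable \(X\) sequential closedness need not imply closedness, so as written the arguments do not quite establish what is claimed. The fix is immediate: either replace sequences by nets throughout, or argue directly with open sets. For instance, to see that \(\Barrier_\beta\) is closed, take \((y,x)\notin\Barrier_\beta\), i.e.\ \(y>\beta(x)\); pick \(c\) with \(\beta(x)<c<y\), and then \((c,+\infty]\times\{\beta<c\}\) is an open neighbourhood of \((y,x)\) disjoint from \(\Barrier_\beta\). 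Similarly, for upper semicontinuity of \(\beta_\Barrier\) you have already reduced \(\{\beta_\Barrier\geq a\}\) to \(\bigcap_{a'<a}\{x:(a',x)\in\Barrier\}\), and each set in the intersection is the preimage of the closed set \(\Barrier\) under the continuous map \(x\mapsto(a',x)\), hence closed---no convergence argument is needed. In the paper's actual application \(X=\Time\) is metrizable, so the sequential version suffices there, but for the lemma as stated you should make this adjustment.
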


What we will show now, on the way to proving \autoref{cor:Bbarrier} is that the first hitting time after \(0\) of any downwards barrier by Brownian motion is a.s.\ equal to the first hitting time after \(0\) of the closure of that barrier. This serves to both resolve the question whether the times in \autoref{lem:geometry} are stopping times and to show that \(\taucl = \tauop\) a.s.

Let us assume for the rest of this section that \(B\) is actually a Brownian motion started in \(0\).

\begin{lemma}
\label{lem:lessnasty}
Let \(\Barrier\) be a downwards barrier in \(\extR \times \Time\). Let \(\closure{\Barrier}\) be the closure of \(\Barrier\) (in the product topology of the usual topologies on \(\extR\) and \(\Time\)).
Define
\begin{align*}
\tau(\omega) & := \inf \{ t > 0 : (B_t(\omega),t) \in \Barrier \} \\
\closure{\tau}(\omega) & := \inf \{ t > 0 : (B_t(\omega),t) \in \closure\Barrier \} \fullstop
\end{align*}
Then \(\tau = \closure\tau\) a.s.
\end{lemma}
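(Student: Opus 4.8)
The plan is first to dispose of one inequality for free and then to run a two-case argument at the random time $\closure\tau$. Since $\Barrier\subseteq\closure\Barrier$, the hitting time of the larger set is the smaller, so $\closure\tau\le\tau$ pointwise and the whole content is the a.s.\ inequality $\tau\le\closure\tau$, i.e.\ $\P(\closure\tau<\tau)=0$. I would fix notation: let $\beta(t):=\sup\{y:(y,t)\in\Barrier\}$ and $\closure\beta(t):=\sup\{y:(y,t)\in\closure\Barrier\}$, so that the $t$-slice of $\Barrier$ is $(-\infty,\beta(t))$ or $(-\infty,\beta(t)]$, the $t$-slice of $\closure\Barrier$ is $(-\infty,\closure\beta(t)]$, and $\closure\beta(t)=\limsup_{s\to t}\beta(s)$. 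Because $\closure\Barrier$ is closed and $t\mapsto(B_t,t)$ is continuous, on $\{\closure\tau<\infty\}$ we have $(B_{\closure\tau},\closure\tau)\in\closure\Barrier$, i.e.\ $B_{\closure\tau}\le\closure\beta(\closure\tau)$; and on $\{\closure\tau<\tau\}$ one has $(B_t,t)\notin\Barrier$ for all $t\in(0,\closure\tau]$ (whence $B_t\ge\beta(t)$) and even $(B_t,t)\notin\closure\Barrier$ for all $t\in(0,\closure\tau)$ (whence $B_t>\closure\beta(t)$).

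The key step is a dichotomy at the time $s:=\closure\tau$. \emph{Case I:} $\limsup_{t\to s,\,t\neq s}\beta(t)>B_s$. Then there is a sequence $t_n\to s$, all on one side of $s$, with $\beta(t_n)>B_s$; by continuity $B_{t_n}\to B_s$, so $B_{t_n}<\beta(t_n)$ eventually and hence $(B_{t_n},t_n)\in\Barrier$. If infinitely many $t_n<s$, this contradicts $(B_t,t)\notin\Barrier$ for $t\le s$; if $t_n\downarrow s$, then $\tau\le t_n$ for all large $n$, so $\tau\le\closure\tau$, again a contradiction. Hence on $\{\closure\tau<\tau\}$ we are in \emph{Case II:} $\limsup_{t\to s,\,t\neq s}\beta(t)\le B_s$, and combining this with $\beta(s)\le B_s\le\closure\beta(s)=\max(\beta(s),\limsup_{t\to s,\,t\neq s}\beta(t))$ forces $B_s=\closure\beta(s)$. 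Thus, on $\{\closure\tau<\tau\}$, the path $t\mapsto(B_t,t)$ stays strictly above the graph of $\closure\beta$ on $(0,\closure\tau)$ and first meets it, at time $\closure\tau$, exactly at the top: $B_{\closure\tau}=\closure\beta(\closure\tau)$, while $(B_{\closure\tau},\closure\tau)\notin\Barrier$.

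It remains to show that this ``tangential first touch'' event is $\P$-null, and this is where the real work lies. I would distinguish whether $\closure\tau$ is a spike time of $\beta$ — meaning $\beta(\closure\tau)=B_{\closure\tau}$ but $\beta<B_{\closure\tau}$ throughout a punctured neighbourhood, so that $(B_{\closure\tau},\closure\tau)$ is the missing top point of an open slice. Spike times form at most a countable set (two cannot be close, by the defining strict inequality), and for each fixed $t^*>0$ one has $\P(B_{t^*}=\beta(t^*))=0$ because the Brownian marginal is atomless; so this part is null. Otherwise there is a sequence $r_n\to\closure\tau$ with $r_n\neq\closure\tau$ and $\beta(r_n)\uparrow B_{\closure\tau}$; applying the strong Markov property at $\closure\tau$ to the post-$\closure\tau$ Brownian motion $\tilde B_u:=B_{\closure\tau+u}-B_{\closure\tau}$ together with Blumenthal's $0$--$1$ law (and the law of the iterated logarithm at small times) one obtains $\tilde B_{|r_n-\closure\tau|}\le\beta(r_n)-B_{\closure\tau}$ along a subsequence — which forces $(B_{r_n},r_n)\in\Barrier$ for times arbitrarily close to $\closure\tau$, hence $\tau\le\closure\tau$ — unless the gaps $B_{\closure\tau}-\beta(r)$ shrink strictly faster than $\sqrt{|r-\closure\tau|}$; in that remaining situation $\closure\beta$ turns out to be continuous with a sharp cusp at $\closure\tau$, which pins $B_{\closure\tau}$ to $\closure\beta(\closure\tau)$ in a way that, once more, can only happen on a $\P$-null set.

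The main obstacle is precisely this last null-set argument: one must separate the two independent sources of randomness — where the continuous path $B$ first reaches the graph of $\closure\beta$, versus the fresh Brownian motion started there by the strong Markov property — in order to rule out Brownian motion meeting $\closure\Barrier$ exactly at a level from which $\beta$ drops away so steeply that re-entering $\Barrier$ is delayed. Everything preceding it is routine (the two-case reduction, closedness of $\closure\Barrier$, continuity of the path, and the identity $\closure\beta=\limsup\beta$); the crux is marshalling the Brownian inputs — atomlessness of the marginals $B_t$ for $t>0$, Blumenthal's $0$--$1$ law, and the law of the iterated logarithm — to close this gap.
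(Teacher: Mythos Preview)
Your reduction to the ``tangential first touch'' event is correct: the inequality $\closure\tau\le\tau$ is free, and on $\{\closure\tau<\tau\}$ your Case~I/II dichotomy legitimately forces $B_{\closure\tau}=\closure\beta(\closure\tau)$ with $(B_{\closure\tau},\closure\tau)\notin\Barrier$. The spike-time sub-case is also fine, since strict local maxima of $\beta$ form a deterministic countable set and the one-dimensional marginals of $B$ are atomless.

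The gap is in the non-spike sub-case. Your LIL argument needs the post-$\closure\tau$ increment $\tilde B_{r_n-\closure\tau}$ to dip below $\beta(r_n)-B_{\closure\tau}$, but the LIL only controls $\tilde B_u$ down to order $-\sqrt{2u\log\log(1/u)}$, whereas $\beta(r_n)-B_{\closure\tau}$ may tend to $0$ arbitrarily fast (e.g.\ like $-\exp(-1/(r_n-\closure\tau))$). You acknowledge this and defer to a ``sharp cusp pins $B_{\closure\tau}$'' claim, but that claim is not proved and it is not clear how one would prove it: the set of such cusp times can be uncountable, $\closure\tau$ is random, and there is no evident mechanism preventing Brownian motion from first meeting $\closure\Barrier$ exactly at such a cusp with positive probability. (There is also a side issue: the $r_n$ may approach $\closure\tau$ from the left, where we already know $B_{r_n}>\closure\beta(r_n)\ge\beta(r_n)$, so the post-$\closure\tau$ Markov property is of no help there.) As written, the argument is incomplete at precisely the step you identify as the crux.

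The paper's proof bypasses all of this with a Girsanov sandwich. One sets $\closure\tau_\varepsilon:=\inf\{t>0:(B_t+\varepsilon A(t),t)\in\closure\Barrier\}$ for a bounded strictly increasing $A$ with $A(0)=0$ and $A'\in L^2(\Time)$. An elementary topological argument (using only that $\closure\Barrier$ is the closure of $\Barrier$ and that $A$ is strictly increasing with $A(0)=0$) gives $\tau\le\closure\tau_\varepsilon$ pointwise for every $\varepsilon>0$. Girsanov then shows $\closure\tau_{1/n}\to\closure\tau$ in distribution; since the sequence is decreasing and bounded below by $\closure\tau$, this upgrades to a.s.\ convergence, and the sandwich $\closure\tau\le\tau\le\closure\tau_{1/n}\to\closure\tau$ closes. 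This route is shorter and sidesteps any pathwise analysis of how $\beta$ behaves near the touching point.
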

\begin{proof}
\newcommand{\taue}{\closure\tau_{\varepsilon}}
As \(\closure\Barrier \supseteq \Barrier\) we clearly have \(\closure\tau(\omega) \leq \tau(\omega)\) for all \(\omega \in \Omega\). Define
\begin{align*}
\taue(\omega) := \inf \{ t > 0 : (B_t(\omega) + \varepsilon \cdot A(t),t) \in \closure\Barrier \} \comma
\end{align*}

where \(A(t) := \frac{t}{1+t} \) is a bounded, strictly increasing function.
Using just that \(\closure\Barrier\) is the closure of \(\Barrier\) one proves by elementary methods that \(\tau(\omega) \leq \taue(\omega)\) for all \(\omega \in \Omega\) and any \(\varepsilon > 0\).
\shortandlong{%
Because \(A(t) = \int_0^t (1+s)^{-2} \d s \) is the integral from \(0\) to \(t\) of a square integrable function we can apply Girsanov's theorem (see e.g.\ \cite[Theorem 38.5]{RoWi00}) to see that \(\closure\tau_{1/n}\) converges to \(\closure\tau\) in distribution as \(n \to \infty\).
}{%

Because \(A(t) = \int_0^t (1+s)^{-2} \d s \) is the integral from \(0\) to \(t\) of a square integrable function we can apply Girsanov's theorem (see e.g.\ \cite[Theorem 38.5 (ii)]{RoWi00}) to see that \(\closure\tau_{1/n}\) converges to \(\closure\tau\) in distribution as \(n \to \infty\).

In detail:

\emph{Route 1:}
To be able to apply \cite[Theorem 38.5 (ii)]{RoWi00} to \(\gamma^\varepsilon_t := - \varepsilon (1+t)^{-2}\) we need to show that \[\zeta^\varepsilon_t := \exp \left( \int_0^t \gamma^\varepsilon_s \d B_s - \frac 1 2 \int_0^t (\gamma^\varepsilon_s)^2 \d s \right)\] are uniformly integrable (they define a martingale, as stated in \cite[Corollary 37.11]{RoWi00}). The integral \(\int_0^t \gamma^\varepsilon_s \d B_s\) is normally distributed with variance \(\int_0^t (\gamma^\varepsilon_s)^2 \d s\) (source: Julio). This allows us to calculate \[\E\left[\abs{\zeta^\varepsilon_t}^{1+\delta}\right] = \exp\left( \frac 1 2 ((1+\delta)^2 - (1+\delta)) \int_0^t (\gamma^\varepsilon_s)^2 \d s \right)\] (or perhaps some other expression that's bounded in \(t\), if I made mistakes while calculating this), which shows that \(\zeta^\varepsilon_t\) are bounded in \(L^{1+\delta}\)-norm, for all \(\delta > 0\) (we really only needed one), uniformly in \(t\), which implies that they are uniformly integrable.

Now we know that \(B_t - \int_0^t \gamma^\varepsilon_s \d s = B_t + \varepsilon \cdot A(t)\) is a Brownian motion wrt the measure \(\mathbb Q^\varepsilon := F \mapsto \tsint F \cdot \zeta^\varepsilon_\infty \d\P\), i.e.\
\begin{align*}
\push{(\omega \mapsto (B_t(\omega) + \varepsilon \cdot A(t))_{t \in \Time})}{\mathbb Q^\varepsilon} = \push{(\omega \mapsto (B_t(\omega))_{t \in \Time})}{\P} \fullstop
\end{align*}

For continuous bounded \(F : \Time \rightarrow \R\)
\begin{multline*}
\lim_{n \to \infty} \tsint F(\closure\tau_{1/n}) \d\P = 
\tsint \lim_{n \to \infty} F(\closure\tau_{1/n}) \d\P = 
\tsint \lim_{n \to \infty} F(\closure\tau_{1/n}) \zeta^{1/n}_\infty \d\P = \\
\lim_{n \to \infty} \tsint F(\closure\tau_{1/n}) \zeta^{1/n}_\infty \d\P = 
\lim_{n \to \infty} \tsint F(\closure\tau_{1/n}) \d \mathbb Q^{1/n} = 
\lim_{n \to \infty} \tsint F(\closure\tau) \d\P \fullstop
\end{multline*}
Here we used that the limit \(\lim_{n \to \infty} \closure\tau_{1/n}\) and therefore also \(\lim_{n \to \infty} F(\closure\tau_{1/n})\) exists pointwise, then that \(\lim_{n \to \infty} \zeta^{1/n}_\infty = 1\) a.s. and in the last equality we used the previous equation.

\newcommand{\Wiener}{\mathbb W}%
\emph{Route 2:}
Set
\newcommand\ttaue[1][\varepsilon]{\tilde\tau_{#1}}
\begin{align*}
\ttaue & : \Paths \rightarrow \Time \\
\ttaue (\eta) & :=  \inf \{ t > 0 : (\eta(t) + \varepsilon \cdot A(t),t) \in \closure\Barrier \} \comma
\end{align*}
s.t.\ \(\taue = \ttaue(B)\).

Set
\newcommand{\Phie}[1][\varepsilon]{\Phi^{#1}}
\begin{align*}
\Phie & : \Paths \rightarrow \Paths \\
\Phie(\eta) & := t \mapsto \eta(t) - \varepsilon \cdot A(t)
\end{align*}
and observe \(\ttaue \circ \Phie = \ttaue[0]\).

Set \(\gamma^\varepsilon_t := \varepsilon (1+t)^{-2}\) and set
\[\zeta^\varepsilon_t := \exp \left( \int_0^t \gamma^\varepsilon_s \d \tilde B_s - \frac 1 2 \int_0^t (\gamma^\varepsilon_s)^2 \d s \right)\]
where \(\tilde B_s(\omega) = \omega(s)\) defines a Brownian motion on \(\Paths\) wrt \(\Wiener\).
The same arguments as in Route 1 show that \(\mathbb Q^\varepsilon := F \mapsto \tsint F \cdot \zeta^\varepsilon_\infty \d\Wiener \) is a probability measure and that \(\push{\Phie}{\mathbb Q} = \Wiener\).

Then for any bounded measurable \(F : \Time \rightarrow \R\):
\begin{multline*}
\tsint F(\taue) \d\P =
\tsint F(\ttaue(\omega)) \d\Wiener(\omega) =
\tsint F(\ttaue(\Phie(\omega))) \d\mathbb Q(\omega) = \\
\tsint F(\ttaue[0](\omega)) \d\mathbb Q(\omega) =
\tsint F(\ttaue[0](\omega)) \zeta^\varepsilon_\infty(\omega)  \d\Wiener(\omega)
\end{multline*}
The last integrand converges pointwise to \(F(\ttaue[0](\omega))\) for \(\varepsilon \to 0\).
Therefore
\begin{align*}
\lim_{n \to \infty} \tsint F(\taue) \d\P = \tsint F(\ttaue[0](\omega)) \d\Wiener(\omega) = \tsint F(\closure\tau) \d\P \fullstop
\end{align*}

}%

\shortandlong{%
As \(\left(\closure\tau_{1/n}\right)_n\) is a decreasing sequence bounded below by \(\closure\tau\) we get that convergence holds almost surely.
}{%
As \(\left(\closure\tau_{1/n}\right)_n\) is a decreasing sequence bounded below by \(\closure\tau\) we get that convergence holds almost surely.
In more detail, as \(\closure\tau_{1/n}\) is a decreasing sequence bounded below by \(\closure\tau\), we know that the pointwise limit exists and also that
\begin{multline*}
\left\{ \lim_{n \to \infty} \closure\tau_{1/n} \neq \closure\tau \right\} = 
\left\{ \lim_{n \to \infty} \closure\tau_{1/n} > \closure\tau \right\} = \\
\bigcup_{q \in \mathbb{Q}_+} \left\{ \lim_{n \to \infty} \closure\tau_{1/n} \geq q \right\} \cap \left\{ q > \closure\tau \right\} = 
\bigcup_{q \in \mathbb{Q}_+} \left\{ \lim_{n \to \infty} \closure\tau_{1/n} \geq q \right\} \setminus \left\{ \closure\tau \geq q \right\}
\end{multline*}
\(\closure\tau \leq \closure\tau_{1/n}\) for all \(n\) implies \( \left\{ \closure\tau \geq q \right\} \subseteq \left\{ \lim_{n \to \infty} \closure\tau_{1/n} \geq q \right\} \) and therefore \[ \P\left[ \left\{ \lim_{n \to \infty} \closure\tau_{1/n} \geq q \right\} \setminus \left\{ \closure\tau \geq q \right\} \right] = \P\left[ \lim_{n \to \infty} \closure\tau_{1/n} \geq q \right] - \P\left[\closure\tau \geq q\right] \] which is \(0\) because \(\closure\tau_{1/n}\) converges to \(\closure\tau\) in distribution and therefore
\begin{multline*}
\P\left[ \lim_{n \to \infty} \closure\tau_{1/n} \geq q \right] = \P\left[ \inf_n \closure\tau_{1/n} \geq q \right] = \P\left[ \bigcap_n \left\{ \closure\tau_{1/n} \geq q \right\} \right] \\ = \lim_{n \to \infty} \P\left[ \closure\tau_{1/n} \geq q \right] \leq \P\left[ \closure\tau \geq q\right] \fullstop
\end{multline*}
}
\end{proof}

The following is a particular case of \cite[Corollary 2.3]{Gr16} (which in turn relies on arguments given in \cite{Ro69,Lo70}). Note that this lemma is purely a statement about barrier-type stopping times and is not directly connected to the optimization problem under consideration.
\begin{lemma}[Uniqueness of Barrier-type solutions]
\label{lem:loynes}
Assume that \( \timeindexed{Y} \) is a \Sgood' process and that the process $Z$ defined through $Z_t:=Y_t(B)$ has a.s.\ continuous paths. 
Let \( \Barrier_1, \Barrier_2 \subseteq \extR \times \Time \) be closed downwards barriers such that for
\begin{align*}
\tau_i(\omega) := \inf \left\{ t > 0 : (Z_t(\omega),t) \in \Barrier_i \right\} 
\end{align*}
we have \(\tau_1 \sim \tau_2\).
Then $\tau_1= \tau_2$ a.s.
\end{lemma}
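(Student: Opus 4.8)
The plan is to adapt Loynes' classical uniqueness argument for Root-type barriers to the present setting.

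First I would record an elementary coupling fact used twice below: if two stopping times satisfy \(\sigma\le\rho\) a.s.\ and \(\sigma\sim\rho\), then \(\sigma=\rho\) a.s. Indeed, pick a bounded, strictly increasing continuous \(h:\extR\to\R\); then \(h(\sigma)\le h(\rho)\) a.s.\ while \(\E[h(\sigma)]=\E[h(\rho)]\) (both finite), so \(\E[h(\rho)-h(\sigma)]=0\) with a non-negative integrand, forcing \(h(\sigma)=h(\rho)\) and hence \(\sigma=\rho\) a.s. In particular, if \(\P[\tau_1<\tau_2]=0\) we are already done (then \(\tau_2\le\tau_1\) a.s.), and symmetrically, so it remains to rule out the case \(\P[\tau_1<\tau_2]>0\) and \(\P[\tau_2<\tau_1]>0\).

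Next I would pass to the union barrier \(\hat\Barrier:=\Barrier_1\cup\Barrier_2\). A finite union of closed downwards barriers is again a closed downwards barrier, and its hitting time \(\hat\tau:=\inf\{t>0:(Z_t,t)\in\hat\Barrier\}\) equals \(\tau_1\wedge\tau_2\); in particular \(\hat\tau\le\tau_i\) a.s. By the coupling fact it now suffices to prove \(\hat\tau\sim\tau_1\): then \(\hat\tau=\tau_1\) a.s., and since \(\tau_2\sim\tau_1\sim\hat\tau\) and \(\hat\tau\le\tau_2\) also \(\hat\tau=\tau_2\) a.s., whence \(\tau_1=\tau_2\) a.s. Writing \(\theta_i:=\tau_i-\hat\tau\ge0\), one has \(\theta_1\wedge\theta_2=0\) a.s.\ (on \(\{\hat\tau<\infty\}\) the closed set \(\hat\Barrier\) is actually hit at \(\hat\tau\), so \((Z_{\hat\tau},\hat\tau)\in\Barrier_1\cup\Barrier_2\); on \(\{\hat\tau=\infty\}\) all \(\tau_i\) are infinite), and \((Z_{\hat\tau},\hat\tau)\in\Barrier_i\) precisely when \(\theta_i=0\). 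Thus \(\Omega\) splits, up to a null set, into \(A_0=\{\theta_1=\theta_2=0\}\), \(A_1=\{\tau_1<\tau_2\}=\{\theta_1=0<\theta_2\}\) and \(A_2=\{\tau_2<\tau_1\}\), and \(\hat\tau\sim\tau_1\) is equivalent to \(\P[A_2]=0\).

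The substantive step, and the one I expect to be the main obstacle, is to show \(\P[A_1]=\P[A_2]=0\). Testing \(\tau_1\sim\tau_2\) against a bounded increasing \(h\) and using the branch decomposition yields \(\E[(h(\hat\tau+\theta_2)-h(\hat\tau))\indicator{A_1}]=\E[(h(\hat\tau+\theta_1)-h(\hat\tau))\indicator{A_2}]\), but this alone does not suffice since both sides are merely non-negative. The extra input is the strong Markov property of the driving Brownian motion \(B\) at the stopping time \(\hat\tau\): on \(A_1\) one has \(\beta_2(\hat\tau)<Z_{\hat\tau}\le\beta_1(\hat\tau)\) and \(\theta_2\) is the further time needed for the continued path to bring \(Z\) down onto the time-deterministic barrier \(\Barrier_2\), and symmetrically on \(A_2\) with the indices \(1\) and \(2\) exchanged, while conditionally on \(\G_{\hat\tau}\) the increments of \(B\) after \(\hat\tau\) are those of a fresh Brownian motion. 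The delicate point is that \(Z=Y_\cdot(B)\) need not itself be Markov, so one cannot simply say ``\(Z\) restarts afresh''; one has to argue through the Brownian continuation together with the time-determinism of the barriers, and then exploit the exchangeability of the roles of \(\Barrier_1\) and \(\Barrier_2\) on \(A_1\) versus \(A_2\) to conclude that both branches are negligible. This is precisely the work carried out in \cite[Corollary 2.3]{Gr16} (following \cite{Ro69,Lo70}); I would invoke that result, the above serving as the road map for a self-contained argument.
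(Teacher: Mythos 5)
Your preliminary reductions are correct and coincide with the paper's: the coupling fact (two ordered stopping times with the same law are a.s.\ equal) and the passage to the union barrier $\Barrier_1\cup\Barrier_2$ with hitting time $\hat\tau=\tau_1\wedge\tau_2$, so that everything reduces to showing $\hat\tau\sim\tau_1$. But at that point the proof stops: you correctly observe that testing against an increasing $h$ gives only an identity between two non-negative quantities, and your proposed way out --- conditioning on $\G_{\hat\tau}$, restarting via the strong Markov property of $B$, and an ``exchangeability'' of the roles of the two barriers --- is not carried out, and you yourself flag the obstruction that $Z=Y_\cdot(B)$ need not be Markov. Ending with ``I would invoke [Gr16, Corollary 2.3]'' leaves the substantive content of the lemma unproved, since that corollary \emph{is} the lemma.

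The missing idea (Loynes' argument, which is what the paper executes) is to partition the \emph{time axis} rather than $\Omega$: because the barriers are downwards barriers, for each fixed $t$ the time-slices $\{y:(y,t)\in\Barrier_1\}$ and $\{y:(y,t)\in\Barrier_2\}$ are nested, so $(0,\infty)=D_1\cup D_2$ where $D_1$ is the set of times at which the slice of $\Barrier_1$ contains that of $\Barrier_2$, and $D_2$ is defined symmetrically. For $D\subseteq D_1$ one then has the \emph{pathwise} inclusion $\{\hat\tau\in D\}\subseteq\{\tau_1\in D\}$: if $\hat\tau=t\in D$ then by closedness of the union barrier and continuity of $Z$ the point $(Z_t,t)$ lies in $\Barrier_1\cup\Barrier_2$, hence (by the definition of $D_1$) in $\Barrier_1$, so $\tau_1\le t=\hat\tau\le\tau_1$. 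This gives $\P[\hat\tau\in D]\le\target(D)$ on $D_1$, symmetrically on $D_2$, trivially on $\{\infty\}$ (since $\hat\tau\le\tau_1$), and via Blumenthal's $0$--$1$ law at $\{0\}$; summing over the partition forces $\hat\tau\sim\target$, and your coupling fact finishes. Note that no strong Markov property is needed for the main step --- the argument is deterministic once the time-axis decomposition is in place --- which is exactly why your attempted probabilistic route runs into the non-Markovianity of $Z$.
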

\shortandlong{%
\begin{proof} Is to be found in \cite[Corollary 2.3]{Gr16}.\end{proof}
}{%
\begin{proof}
I would like to record one detail here for myself in case I wonder about it later:
The context is general starting law and the stopping times may be distributed anywhere on \([0,\infty]\).

So, the strategy is: we show that if \(\tau_1 \sim \target\) and \(\tau_2 \sim \target\), where \(\target\) is a probability on \([0,\infty]\), then \(\tau_3 \sim \target\) where \(\tau_3\) is the hitting time of \(\Barrier_3 := \Barrier_1 \cup \Barrier_2\) by \(Z\). By definition \(\tau_3 \leq \tau_1\) and \(\tau_3 \leq \tau_2\) and because it has the same distribution, it must be equal to both.

To show that \(\tau_3 \sim \target\) we show that \(\P[\tau_3 \in D] \leq \target(D)\) for all \(D \subseteq [0,\infty]\). We do this by finding sets \(D_0, D_1, D_2, D_\infty \subseteq [0,\infty]\) such that \( D_0 \cup D_1 \cup D_2 \cup D_\infty = [0,\infty]\) and showing separately for \(i \in \{0,1,2,\infty\}\) that for \(D \subseteq D_i\) we have \(\P[\tau_3 \in D] \leq \target(D)\).

Define \(D_0 := \{0\}\), \(D_1 := \{ t \in (0,\infty) : (x,t) \in \Barrier_2 \implies (x,t) \in \Barrier_1 \}\), \(D_2 := \{ t \in (0,\infty) : (x,t) \in \Barrier_1 \implies (x,t) \in \Barrier_2 \}\), \(D_\infty = \{\infty\}\).
\(D_1\) is the set where \(\Barrier_1\) is higher than \(\Barrier_2\) and \(D_2\) is the set where \(\Barrier_2\) is higher than \(\Barrier_1\).

The classical part: For a.a.\ \(\omega \in \Omega\) we have: For \(D \subseteq D_1\), if \(\tau_3(\omega) = t \in D\), then \(Z_t(\omega) \in \Barrier_3\) (because \(\Barrier_3\) is closed and \(Z\) has a.s.\ continuous paths) and by the definition of \(D_1\) also \(B_t(\omega) \in \Barrier_1\), so \(\tau_1(\omega) \leq t\). But \(t = \tau_3(\omega) \leq \tau_1(\omega)\), so \(\tau_1(\omega) = t \in D\). Therefore \(\P[\tau_3 \in D] \leq \P[\tau_1 \in D] = \target(D)\). For \(D_2\) swap the roles of \(\tau_1\) and \(\tau_2\), etc.

As \(\tau_3 \leq \tau_1\) we have \(\P[\tau_3 = \infty] \leq \P[\tau_1 = \infty]\). This takes care of \(D \subseteq D_\infty\).

Now we want to show that \(\P[\tau_3 = 0] \leq \target(\{0\})\).
By Blumenthal's 0-1-law, for any \(x \in \R\) the probability that \(Y\) applied to Brownian motion started in \(x\) is immediately stopped by a barrier is either \(0\) or \(1\), so we may define \(\Barrier^0_i := \{ x \in \R : \text{Brownian motion started in $x$ is a.s.\ stopped immediately} \}\) (for \(i \in \{1,2,3\}\)).
Then we have \(\P[\tau_i = 0] = \initial(\Barrier^0_i)\).
Note that \(\Barrier^0_i\) is downwards closed, i.e.\ \(x \in \Barrier^0_i\) and \( y \leq x \) implies \(y \in \Barrier^0_i\).
\( x \notin \Barrier^0_1\) and \( x \notin \Barrier^0_2 \) means that almost surely \(Y\) applied to Brownian motion started in \(x\) will run for a positive amount of time before it hits either \(\Barrier_1\) or \(\Barrier_2\). So it will also run for a positive amount of time before it hits \(\Barrier_3 = \Barrier_1 \cup \Barrier_2\). In other words \(x \notin \Barrier^0_3\).
As \(\Barrier^0_1\) and \(\Barrier^0_2\) are downwards closed, at least one of them contains the other. Without loss of generality, \(\Barrier^0_1 \supseteq \Barrier^0_2\), so \( x \notin \Barrier^0_1 \) implies \( x \notin \Barrier^0_2 \) and these together imply \( x \notin \Barrier^0_3\), so \(1-\target(\{0\}) = \P[\tau_1 > 0] = \initial(\compl{(\Barrier^0_1)}) \leq \initial(\compl{(\Barrier^0_3)}) = \P[\tau_3 > 0] = 1-\P[\tau_3=0] \).
\end{proof}%
}%

We now have the necessary prerequisites to use our main results in showing that the first optimization problem in the \nameref{sec:appetizer} admits a (unique) barrier-type solution.

\begin{proof}[Proof of \autoref{cor:Bbarrier}]
The strategy is as follows: We choose a cost function and leverage \autoref{thm:existence} to show that an optimizer exists, the \nameref{thm:monotonicity} in the form of \autoref{thm:monotonicity} and \autoref{lem:geometry} will -- with some help from \autoref{lem:lessnasty} -- show that any optimizer must be the hitting time of a barrier.
\autoref{lem:loynes} shows that any two barrier-type solutions must be equal.

We now provide the details.
Start with a cost function \(\cost(\omega,t) := -\omega(t) A(t)\) for a strictly monotone function \(A: \Time \rightarrow \R\) which satisfies \(|A(t)| \leq K(1+t^p)\) and assume that \(\target\) has moment of order \(\frac 1 2 + p + \varepsilon\) for some \(\varepsilon > 0\). To prove that a barrier-type solution exists when \(\target\) has first moment, choose a bounded strictly increasing \(A\) and \(p=0\), \(\varepsilon=\frac 1 2\) in this step. (These assumptions guarantee in particular that the optimization problems considered below have a finite value.)
Clearly the problem \probref{OptStop} for \(c\) corresponds to \probref{OptStoppsiBt} for \(\psi(B_t,t) = B_t A(t)\) (i.e.\ $\psi$ takes the role of $-c$ such that the minimal/maximal values agree up to a change of sign).
We will deal with the case where \(\psi(B_t,t) = \phi(B_t)\) at the end of this proof.

We now check that the conditions in Version B of \autoref{thm:existence} are satisfied. We also need to check that \autoref{ass:specific} holds.
Here we need the assumption that \(\target\) has moment of order \(\frac 1 2 + p + \varepsilon\), as well as the H\"older and Burkholder-Davis-Gundy inequalities. The latter specialized to Brownian motion state that for all \(q > 0\) there are positive constants \(K_0\) and \(K_1\) such that for any stopping time \(\tau\) we have \( K_0 \, \E\left[\tau^{q/2}\right] \leq \E\left[(|B|^*_\tau)^q\right] \leq K_1 \, \E\left[\tau^{q/2}\right] \) (where \(|B|^*_t = \sup_{s \leq t} |B_s| \)).
With these in hand a straightforward calculation allows us to bound \(B_\tau A(\tau)\) in the \(L^{1+\delta}\)-norm for some \(\delta > 0\), independently of the stopping time \(\tau \sim \target\).

\shortandlong{%
}{%
In detail, we may set \(q_1 := 1+2p\), \(q_2 := \frac{1+2p}{2p}\), choose \(\delta>0\) s.t. \((1+\delta) (\frac 1 2 + p) = \frac 1 2 + p + \varepsilon\) and calculate.
\begin{multline*}
\E[|B_\tau A(\tau)|^{1+\delta}] \leq
\E[|B_\tau|^{(1+\delta) q_1}]^{1/q_1} \E[|A(\tau)|^{(1+\delta) q_2}]^{1/q_2} \leq \\
K_1 \E[\tau^{(1+\delta) q_1/2}]^{1/q_1} (\E[|1+\tau^p|^{(1+\delta) q_2}]^{1/((1+\delta)q_2)})^{1+\delta} \leq \\
K_1 \E[\tau^{(1+\delta) q_1/2}]^{1/q_1} (1 + \E[\tau^{p (1+\delta) q_2}]^{1/((1+\delta)q_2)})^{1+\delta} = \\
K_1 \E[\tau^{(1+\delta) (1/2+p)}]^{1/q_1} (1 + \E[\tau^{(1+\delta) (\frac 1 2 +p)}]^{1/((1+\delta)q_2)})^{1+\delta} = K_2
\end{multline*}
}%
This shows both that the uniform integrability condition in Version B of \autoref{thm:existence} is satisfied and that Assumption \assref{ass:wellposed} is satisfied.

On \(\Paths\) we may choose the (Polish) topology of uniform convergence on compacts. For the topology on \(\Time\) we start with the usual topology and turn \(A\) into a continuous function (if it wasn't), by making use of the fact that any measurable function from a Polish space to a second countable space may be turned into a continuous function by passing to a larger Polish topology (with the same Borel sets) on the domain. (This can be found for example in \cite[Theorem 13.11]{Ke95}.)

In the statement of \autoref{cor:Bbarrier} we did not require that the probability space \( (\Omega,\G,\timeindexed{\G},\P ) \) satisfy Assumption \assref{ass:randomization}. 
To remedy this we can enlarge the probability space by setting \(\tilde\Omega := \Omega \times [0,1]\), \(\tilde G_t := \G_t \atimes \Borel{[0,1]}\) and \(\tilde\P := \P \mtimes \Lebesgue\), where \(\Lebesgue\) is Lebesgue measure on \([0,1]\). 
On this space we consider the Brownian motion \(\tilde B_t (\omega, x) := B_t(\omega)\). 
\autoref{thm:existence} now gives us an optimal stopping time \(\tilde\tau\) on the enlarged probability space. 
If we can show that this stopping time is in fact the hitting time of a barrier, then it follows that \(\tilde\tau = \tau \circ ((\omega,x) \mapsto \omega)\) for a stopping time \(\tau\) which is defined as the hitting time of the Brownian motion \(B\) of the same barrier. 
As there are \emph{more} stopping times on \((\tilde\Omega, \tilde\G, \timeindexed{\tilde\G}) \) than on  \( (\Omega,\G,\timeindexed{\G} ) \) in the sense that any stopping time \(\tau'\) on \( (\Omega,\G,\timeindexed{\G} ) \) induces a stopping time \(\tilde\tau' := \tau' \circ ((\omega,x) \mapsto \omega)\) on \((\tilde\Omega, \tilde\G, \timeindexed{\tilde\G}) \) we conclude that \(\tau\) must also be optimal among the stopping times on \( (\Omega,\G,\timeindexed{\G} ) \).
With this out of the way, let us refer to our Brownian motion by \(B\), to the optimal stopping time by \(\tau\) and to our filtered probability space by \( (\Omega,\G,\timeindexed{\G},\P ) \) irrespective of whether this is the original process and space we started with, or an enlarged one.

Choosing \( p_0 := \frac 1 2 + p + \varepsilon \) in Assumption \assref{ass:finitemoment} we apply \autoref{thm:monotonicity} to obtain a set \(\Gamma\) on which \((B,\tau)\) is concentrated under \(\P\) and for which \eqref{eq:monotonicity} holds.
As \(\target\) is concentrated on \((0,\infty)\), we may assume that \(\Gamma \cap (\Paths \times \{ 0 \}) = \emptyset\).
Next we want to show that \autoref{lem:geometry} applies with \(Y_t(\omega) = \omega(t)\).

Translating \eqref{eq:yaxis} to our situation, we want to prove that \(\omega(t) < \eta(t)\) implies
\begin{align}
\label{eq:Byaxis}
-\omega(t) A(t) - \E\left[ \left(\eta(t) + \tilde{B}_\sigma\right) A(\sigma) \right] < -\eta(t) A(t) - \E\left[ \left(\omega(t) + \tilde{B}_\sigma\right) A(\sigma) \right] \comma
\end{align}
where \(\tilde B\) is Brownian motion started in \(0\) at time \(t\) on \(\Paths[t]\) and \(\sigma\) is any stopping time thereon with \(\Blaw{t}{0}(\sigma = t) < 1\), \(\Blaw{t}{0}(\sigma = \infty) = 0\), \(\tsint \sigma^{p_0} \d{\Blaw{t}{0}} < \infty\).
Again the Burkholder-Davis-Gundy inequality shows that \(\E[\tilde{B}_\sigma  A(\sigma)] < \infty\).
So \eqref{eq:Byaxis} turns into
\begin{align*}
\omega(t)\, \E[A(\sigma) - A(t)] < \eta(t)\, \E[A(\sigma) - A(t)]
\end{align*}
which clearly follows from the assumptions.
So we know that \autoref{lem:geometry} holds, i.e.\ using the names from said lemma we have \(\taucl \leq \tau \leq \tauop\) \(\P\)-a.s.

\(\Gamma \cap (\Paths \times \{ 0 \}) = \emptyset\) implies \(\Rcl \cap (\R \times \{0\}) = \emptyset\) and therefore \( \taucl(\omega) = \inf \{ t > 0 : (B_t(\omega),t) \in \Rcl \} \), and likewise for \(\Rop\) and \(\tauop\).
As \(\closure\Rcl = \closure\Rop =: \closure\Barrier\) it follows from \autoref{lem:lessnasty} that \(\taucl = \tau = \tauop\) a.s.\ and that $\tau$ is of the form claimed in  \eqref{eq:barriertype} with $\beta(t) := \sup\{y\in \R : (y,t) \in \closure\Barrier\}$.
The uniqueness claims follow from \autoref{lem:loynes} and what we have already proven.

We now treat the case where \(\psi(B_t,t) = \phi(B_t)\) with \(\phi''' > 0\), \(\abs{\phi(y)} \leq K (1 + |y|^p)\) and \(\target\) has finite moment of order \( \frac p 2 + \varepsilon \) for some \(\varepsilon > 0\). Most of the proof remains unchanged.
Setting \(\cost(\omega,t) = -\phi(\omega(t))\) we may again use the Burkholder-Davis-Gundy inequalities to show that \(\cost(B_\tau,\tau)\) is bounded in \(L^{1+\delta}\)-norm, independently of the stopping time \(\tau \sim \target\), thereby showing both that Assumption \assref{ass:wellposed} is satisfied and that the uniform-integrability condition in Version B of \autoref{thm:existence} is satisfied.

It remains to show that \(\omega(t) < \eta(t)\) implies \(((\omega,t),(\eta,t)) \in \SG\).
\(\phi''' > 0\) implies that the map \(y \mapsto \phi(\eta(t) + y) - \phi(\omega(t) + y)\) is strictly convex. By the strict Jensen inequality \(\E[ \phi(\eta(t) + \tilde B_\sigma) - \phi(\omega(t) + \tilde B_\sigma) ] > \phi(\eta(t)) - \phi(\omega(t))\) for any stopping time \(\sigma\) on \(\Paths[t]\) which is almost surely finite, satisfies optional stopping and is not almost surely equal to \(t\).
As we may choose \(p_0 := \frac p 2 + \varepsilon \), which is greater than \(1\), we may assume that the \(\sigma\) in the definition of \(\SG\) has finite first moment, which is enough to guarantee that it satisfies optional stopping.
Rearranging the last inequality gives \eqref{eq:SG}.
\end{proof}

\section{Existence of an Optimizer}
The proof of existence of solutions to the Problem \probref{OptStop} crucially depends on thinking of stopping times as the joint distribution of the process to be stopped and the stopping time. We introduce some concepts to make this precise and give a proof of \autoref{thm:existence} at the end of this section.

\begin{lemma}
\label{lem:cExp}
Let \(G: \Paths[t] \rightarrow \R \), and \(s \geq t\). The function
\begin{align*}
\omega \mapsto \tsint G((\omega,s) \conc \theta) \d{\Blaw{s}{0}}(\theta)
\end{align*}
is a version of the conditional expectation \(\E_{\Blaw{t}{\initial}}[G|\F^t_s]\) (for any initial distribution \(\initial\)). Henceforth, by \(\cExp[t]{s}{G}\) we will mean this function.

If \(G \in C_b\left(\Paths[t]\right)\), then \(\cExp[t]{s}{G} \in C_b\left(\Paths[t]\right)\).
\end{lemma}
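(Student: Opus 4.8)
The plan is to verify the defining properties of conditional expectation directly, using the structure of Brownian motion together with the concatenation operation of Definition \ref{def:concat}. Write $H(\omega) := \tsint G((\omega,s) \conc \theta) \d{\Blaw{s}{0}}(\theta)$ for $\omega \in \Paths[t]$. First I would check that $H$ is $\F^t_s$-measurable: since $(\omega,s)\conc\theta$ restricted to $[t,s]$ depends only on $\restr{\omega}{[t,s]}$, and for $r>s$ equals $\omega(s)+\theta(r)$, the value $H(\omega)$ depends on $\omega$ only through $\restr{\omega}{[t,s]}$, hence through $\F^t_s$. (One should remark that $H$ is well-defined and measurable because $G$ is measurable and $\theta\mapsto G((\omega,s)\conc\theta)$ is integrated against the fixed law $\Blaw{s}{0}$; a monotone-class / Fubini argument handles measurability in $\omega$.) Second, I would verify the averaging property: for every $F \in \F^t_s$ we need $\tsint_F H \d{\Blaw{t}{\initial}} = \tsint_F G \d{\Blaw{t}{\initial}}$. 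This is where the key probabilistic input enters — namely the Markov property of Brownian motion in the form of the decomposition of $\Blaw{t}{\initial}$: conditionally on $\F^t_s$, the restriction of the canonical path to $[s,\infty)$, recentered at its value at time $s$, is an independent copy of $\Blaw{s}{0}$, and the full path is recovered as $(\omega,s)\conc\theta$. Concretely, the map $\Paths[t] \to \Paths[t]\times\Ct[s]$ sending a path to (its restriction to $[t,s]$, suitably extended) together with its recentered tail pushes $\Blaw{t}{\initial}$ forward to (law of restricted path) $\otimes\, \Blaw{s}{0}$, and composing with $\conc$ gives back the identity in distribution. Integrating $G$ and applying Fubini then yields exactly the averaging identity, with the indicator of $F\in\F^t_s$ pulling through because $F$ does not constrain the tail $\theta$.

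For the continuity statement, assume $G \in C_b(\Paths[t])$. Boundedness of $H$ is immediate since $|H(\omega)| \le \sup|G|$. For continuity, suppose $\omega_n \to \omega$ uniformly on compacts in $\Paths[t]$. Then for each fixed $\theta \in \Ct[s]$ with $\theta(s)=0$ one has $(\omega_n,s)\conc\theta \to (\omega,s)\conc\theta$ uniformly on compacts: on $[t,s]$ this is just $\omega_n\to\omega$, and on $(s,\infty)$ it is $\omega_n(s)+\theta \to \omega(s)+\theta$, which holds because $\omega_n(s)\to\omega(s)$. Hence $G((\omega_n,s)\conc\theta) \to G((\omega,s)\conc\theta)$ for every such $\theta$, and since $|G|$ is bounded, dominated convergence with respect to $\Blaw{s}{0}$ gives $H(\omega_n)\to H(\omega)$. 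Thus $H \in C_b(\Paths[t])$.

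The main obstacle I anticipate is the bookkeeping in the averaging step: one must carefully state the Markov-property-as-pushforward decomposition of $\Blaw{t}{\initial}$ under concatenation and splitting, check that the relevant maps are measurable, and justify the Fubini interchange (which is harmless here since $G$ is either integrable in the continuous-bounded case or one restricts to integrable $G$, but in the general measurable case one may need $G$ to be, say, bounded or to interpret the statement modulo integrability). Everything else — the $\F^t_s$-measurability of $H$ and the continuity claim — is routine once the concatenation identities are spelled out. If one wants to avoid recomputing the Markov decomposition from scratch, it can simply be quoted as the defining property of $\Blaw{t}{\initial}$ together with the strong Markov property already assumed of $B$.
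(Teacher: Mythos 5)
Your argument is correct and is exactly the standard one the paper has in mind; the paper's own proof consists of the single word ``Obvious.'' The three steps you spell out --- $\F^t_s$-measurability of $H$ via the fact that $(\omega,s)\conc\theta$ sees $\omega$ only through $\restr{\omega}{[t,s]}$, the averaging identity via the Markov property of Brownian motion as a pushforward decomposition of $\Blaw{t}{\initial}$ under splitting and concatenation plus Fubini, and continuity via dominated convergence --- are precisely the details being suppressed, and your caveat about integrability of $G$ in the general measurable case is the right one to flag.
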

\begin{proof}
Obvious.
\end{proof}
Here we use \(C_b(X)\) to denote the set of continuous bounded functions from a topological space \(X\) to \(\R\).
The last sentence of the lemma is of course true for any topology on \(\Paths[t]\) for which the map \(\omega \mapsto \omega \conc \theta\) is continuous for all \(\theta\), but we will only need it for the topology of uniform convergence on compacts.\footnote{And that choice is rather arbitrary itself, as close reading will reveal.}

Given spaces \(X\) and \(Y\) we will denote the projection from \(X \times Y\) to \(X\) by \(\proj{X}\) (and similarly for \(Y\)). For a measurable map \(F: X \rightarrow Y\) between measure spaces and a measure \(\nu\) on \(X\) we denote the pushforward of \(\nu\) under \(F\) by \(\push{F}{\nu} := D \mapsto \nu(\invimage{F}{D})\).

{%
\renewcommand{\initial}{\kappa}%
\renewcommand{\target}{\nu}%
\begin{definition}[$\RST$]
\label{def:RST}%
The set \(\RSTty{t}{\initial}\) of \emph{randomized stopping} times (of Brownian motion started at time \(t\) with initial distribution \(\initial\)) is defined as the set of all subprobability measures \(\xi\) on \(\Spacetime[t]\) such that \(\push{(\proj{\Paths[t]})}{\xi} \leq \Blaw{t}{\initial}\) and that
\begin{align}
\label{eq:RSTadapted}
 \tsint F(r) \left( G(\omega) - \cExp[t]{s}{G}(\omega) \right ) \d{\xi}(\omega,r) = 0
\end{align}
for all \(s > t\), all \( G \in C_b\left(C\left(\lcro{t,\infty}\right)\right) \) and all \(F \in C_b\left(\lcro{t,\infty}\right) \) supported on \([t,s]\).

In this definition the topology on \(\Paths[t]\) is that of uniform convergence on compacts and the topology on \(\Time[t]\) is the usual topology.

Given a distribution \(\target\) on \(C\left(\lcro{t,\infty}\right)\) we write 
\begin{align*}
\RSTtyd{t}{\initial}{\target} := \left\{ \xi \in \RSTty{t}{\initial} : \push{(\proj{\Time[t]})}{\xi} = \target \right\} \fullstop
\end{align*}
We write \(\RSTtyf{t}{\initial}\) for the set of all \(\xi \in \RSTty{t}{\initial}\) with mass \(1\) and call these the \emph{finite} randomized stopping times.

In any of these, if we drop the superscript \(t\) then we will mean time \(t = 0\), while, if we drop the subscript \(\initial\), then we mean that the initial distribution \(\initial = \delta_0\), i.e.\ the Brownian motion to be stopped is started deterministically in \(0\).
\end{definition}%

To explain the qualifier \emph{finite} it may help to imagine that for a non-finite randomized stopping time of mass \(\alpha < 1\), the mass \(1-\alpha\) which is missing is placed along \(C(\lcro{t,\infty})\times\{\infty\}\).

The following \autoref{lem:equivOpt} from \cite{BCH} shows that the problem \probref{OptStop} is equivalent to the following optimization problem \probref{OptStop'} in the sense that a solution of one can be translated into a solution of the other and vice versa. This of course also implies that the values of the two problems are equal, thereby showing that the concrete space \( (\Omega,\G,\timeindexed{\G},\P ) \) has no bearing on this value, as long as Assumptions \ref{ass:general} and \ref{ass:specific} are satisfied.

The definition we have given for a randomized stopping time is only the most convenient (for our purposes) of a number of possible equivalent definitions. Although \autoref{lem:equivOpt} below should provide some intuition on what a randomized stopping time \emph{is}, the reader may still wish to refer to \cite[\autoref{BCH-thm:equiv RST}]{BCH} for the other possible ways of defining randomized stopping times. The first step in connecting condition \eqref{eq:RSTadapted}, which is one of the equivalent conditions listen in said theorem, to the others, is to notice that \eqref{eq:RSTadapted} can be rewritten as
\begin{align*}
 \tsint \left( \tsint F(r) \d{\xi_\omega}(r) \right) \left( G(\omega) - \cExp[t]{s}{G}(\omega) \right ) \d{\Blaw{t}{\initial}}(\omega) = 0 \comma
\end{align*}
where \(\xi_\omega\) is a disintegration of \(\xi\) with respect to \(\Blaw{t}{\kappa}\).
This says that the function \(\omega \mapsto \tsint F(r) \d{\xi_\omega}(r)\) is orthogonal to \(G - \cExp[t]{s}{G} \) for all bounded continuous \(G\), i.e.\ that it is a.s.\ \(\F^t_s\)-measurable whenever \(F\) is supported on \([t,s]\). A limit argument then shows that \(\omega \mapsto \xi_\omega([t,s])\) is a.s.\ \(\F^t_s\)-measurable.
Again, we refer the reader to \cite{BCH} for a more detailed exposition.
}

\begin{problem}[\textsc{OptStop'}]
\label{OptStop'}
Among all randomized stopping times $\xi \in \RSTyd{\initial}{\target}$ find the minimizer of
\begin{align*} \xi' \mapsto \tsint \cost \d{\xi'} \fullstop \end{align*}
\end{problem}

\begin{BCHlemma}[{\cite[Lemma \ref{BCH-lem:equivOpt}]{BCH}}]\label{lem:equivOpt}
  Let $\tau$ be a $\timeindexed\G$-stopping time and consider
  \begin{align*}
    \Phi & : \Omega\to \Paths \times [0,\infty] \\
    \Phi(\omega) & := ((B_t(\omega))_{t\ge 0}, \tau(\omega)) \fullstop
  \end{align*}
  Then $\xi:= \mrestr{\push{\Phi}{\P}}{\Spacetime}$ is a randomized stopping time, i.e.\ \(\xi \in \RSTy{\initial}\), and for any non-negative measurable process $F: \Spacetime \to \R$ we have
  \begin{align}\label{RST2ST}
  \tsint F \d{\xi} = \E[(F \cdot \indicator{\Spacetime}) \circ \Phi] = \E[F(B,\tau) \cdot \indicator{\Time}(\tau)] \fullstop
  \end{align}
  For any $\xi \in \RSTy{\initial}$, we can find a $\timeindexed\G$-stopping time $\tau$ such that $\xi = \push{\Phi}{\P}$ and \eqref{RST2ST} holds.

  \(\xi\) is a finite randomized stopping time iff \(\tau\) is a.s.\ finite.
\end{BCHlemma}

\begin{proof}[Proof of Theorem \ref{thm:existence}]
We prove Version B of the theorem. Version A is a special case.
We show that Problem \probref{OptStop'} has a solution. To this end we show that the set \(\RSTyd{\initial}{\target}\) is compact (in the weak topology). From the fact that \(\cost\) is lower semicontinuous and bounded from below in an appropriate sense we then deduce by the Portmanteau theorem that the map 
\begin{align*}
\hat\cost & : \RSTyd{\initial}{\target} \rightarrow \lorc{-\infty,\infty} \\
\hat\cost(\zeta) & := \tsint \cost \d{\zeta}
\end{align*}
is lower semicontinuous and therefore that the infimum \( \inf_{\zeta \in \RSTyd{\initial}{\target}} \hat\cost(\zeta) \) is attained.

Now for the details.
On each of the spaces \(\Paths\) and \(\Time\) we are dealing with two topologies, one coming from the \autoref{def:RST} of randomized stopping times (to wit, the topology of uniform convergence on compacts on the space \(\Paths\) and the usual topology on \(\Time\)) and one coming from the assumptions in the statement of this theorem.
We can equip each of these spaces with the smallest topology which contains the two topologies in question.
These are again Polish topologies and they still generate the standard sigma-algebras on the respective spaces.
For the remainder of this proof all topological notions are to be understood relative to these topologies.
So the topology on \(\Spacetime\) is the product topology of these two topologies, and the weak topology on the space of measures on \(\Spacetime\) is to be understood relative to this product topology.
The cost function \(\cost\) of course remains lower semicontinuous and by \autoref{lem:cExp} the functions \((\omega,r) \mapsto F(r) \left( G(\omega) - \cExp{s}{G} \right)\) appearing in \autoref{def:RST} are continuous.

Note that for \(\xi \in \RSTyd{\initial}{\target}\) as \(\target\) has mass \(1\), so must \(\xi\) and \(\push{(\proj{\Paths})}{\xi}\), which together with \(\push{(\proj{\Paths})}{\xi} \leq \Blaw{0}{\initial}\) implies \(\push{(\proj{\Paths})}{\xi} = \Blaw{0}{\initial}\).
So we deduce
\newcommand{\CplWmu}{\Pi}
\begin{align*}
\RSTyd{\initial}{\target} = \left\{ \xi \in \CplWmu : \tsint F(s) \big( G - \E[G|\F^0_t] \big)(\omega) \d{\xi}(\omega,s) = 0 \enspace \forall (t,F,G) \in \star \right\}
\end{align*}
where
\begin{align*}
\xi \in \CplWmu & \iff  \push{(\proj{\Paths})}{\xi} = \Blaw{0}{\initial} \text{ and } \push{(\proj{\Time})}{\xi} = \target\\
(t,F,G) \in \star & \iff \parbox[t]{8.5cm}{ \(t > 0\), \(F \colon \Time \rightarrow \R\) is bounded and continuous in the usual topologies, and \(0\) outside \([0,t]\), \(G \colon \Paths \rightarrow \R\) is bounded and continuous as a function from the topology of uniform convergence on compacts.} 
\end{align*}
\shortandlong{%
The set \( \CplWmu \) is compact by Prokhorov's Theorem and the fact that pushforwards are continuous maps between measure spaces.
}{%
The set \( \CplWmu \) is closed because pushforwards are continuous maps. We show that it is also tight, so that Prokhorov's Theorem implies that it is compact. Let \(\varepsilon > 0\) and choose compact sets \(K_1 \subseteq \Paths\), \(K_2 \subseteq \Time\) s.t.\ \(\Blaw{0}{\initial}(\compl{K_1}) < \frac \varepsilon 2 \) and \(\target(\compl{K_2}) < \frac \varepsilon 2\), then for all \(\xi \in \CplWmu\) we have \(\xi(\compl{(K_1 \times K_2)}) \leq \xi(\compl{K_1}\times\Time) + \xi(\Paths\times\compl{K_2}) < \varepsilon\).
}%
It remains to show that \(\RSTyd{\initial}{\target}\) is a nonempty closed subset.
It is nonempty because the product measure \(\Blaw{0}{\initial} \mtimes \target \in \RSTyd{\initial}{\target}\).
It is closed because, as noted, \((\omega,s) \mapsto F(s) \left( G - \cExp{t}{G} \right )(\omega)\) is continuous for all \((t,F,G) \in \star\).

Now we show that \( \hat\cost \) is lower semicontinuous.
The functions \( \cost^N := \cost \vee -N \) are each bounded from below and lower semicontinuous. By the Portmanteau theorem the maps \(\hat\cost^N := \zeta \mapsto \tsint \cost^N \d{\zeta}\) are lower semicontinuous.
On \(\RSTyd{\initial}{\target}\) they converge uniformly to \(\hat \cost\) because
\begin{align*}
\sup_\zeta \abs{\hat\cost(\zeta) - \hat\cost^N(\zeta)} \leq \sup_\zeta \tsint \abs{ \cost - \cost^N } \d{\zeta} \leq \sup_{\zeta \in \RSTyd{\initial}{\target}} \tsint \cost_- \cdot \indicator{\cost_- \geq N} \d{\zeta} \comma
\end{align*}
which converges to \(0\) as \(N\) goes to \(\infty\) by the uniform integrability assumption.
As a uniform limit of lower semicontinuous functions is again lower semicontinuous we see that \(\hat\cost\) is lower semicontinuous.
\end{proof}

\section{Geometry of the Optimizer}

This section is devoted to the proof of \autoref{thm:monotonicity}. The proof closely mimicks that of \autoref{BCH-GlobalLocal}/\autoref{BCH-GlobalLocal2} in \cite{BCH}. For the benefit of those readers already familiar with said paper we will first describe the changes required to the proofs there to make them work in our situation and then -- for the sake of a more self-contained presentation -- indulge in reiterating the main arguments and only citing results from \cite{BCH} that we can use verbatim.

\newcommand{\SGw}{\widehat\SG^\xi}
\begin{proof}[Sketch of differences in the proof of \autoref{thm:monotonicity} relative to {\cite[Theorem 5.7]{BCH}}
]
Again the strategy is to show that for a larger set \(\SGw \supseteq \SG\) we can find a set \(\Gamma \subseteq \Spacetime\) such that \(\SGw \cap \left(\init\Gamma \times \Gamma\right) = \emptyset\). The definition of \(\SGw\) must of course be adapted analoguously to the changes required to the definition of \(\SG\).

Apart from that the only real changes are to \cite[\autoref{BCH-Invisible2}]{BCH}. Whereas previously it was essential that the randomized stopping time \(\xi^{r(\omega,s)}\) is also a valid randomized stopping time of the Markov process in question when started at a different time but the same location \(\omega(s)\), we now need that \(\xi^{r(\omega,s)}\) will also be a randomized stopping time of our Markov process when started at the same time \(s\) but in a different place. Of course, when we are talking about Brownian motion both are true, but this difference is the reason why in the case of the Skorokhod embedding the right class of processes to generalize the argument to is that of Feller processes while in our setup we don't need our processes to be time-homogeneous but we do need them to be space-homogeneous.
That we are able to plant this \enquote{bush} \(\xi^{r(\omega,s)}\) in another location is what guarantees that the measure \(\xi_1^\pi\) defined in the proof of \autoref{BCH-Invisible2} of \cite{BCH} is again a randomized stopping time.

Whereas in the Skorokhod case the task is to show that the new better randomized stopping time \(\xi^\pi\) \emph{embeds} the same distribution as \(\xi\) we now have to show that the randomized stopping time we construct \emph{has} the same distribution as \(\xi\). The argument works along the same lines though -- instead of using that \(\left((\omega,s),(\eta,t)\right) \in \SGw\) implies \(\omega(s)=\eta(t)\) we now use that \(\left((\omega,s),(\eta,t)\right) \in \SGw\) implies \(s=t\).
\end{proof}

We now present the argument in more detail.

As may be clear by now, what we will show is that if \(\xi \in \RSTyd{\initial}{\target}\) is a solution of \probref{OptStop'}, then there is a \Sgood' set \(\Gamma \subseteq \Spacetime\) such that \(\SG \cap \left(\init{\Gamma} \times \Gamma\right) = \emptyset\). Using \autoref{lem:equivOpt} this implies \autoref{thm:monotonicity}.

\newcommand{\casesif}{& \quad \text{for} \enspace}%
We need to make some preparations.
To align the notation with \cite{BCH} and to make some technical steps easier it is useful to have another characterization of \Sgood' processes and sets. To this end define
\begin{definition}
\label{def:S}
\begin{align*}
S & := \bigcup_{t \in \Time} C([0,t]) \times \{t\} \\
r & : \Spacetime \rightarrow S \\
r(\omega,t) & := \left(\restr{\omega}{[0,t]},t\right)
\end{align*}
\(r\) has many right inverses. A simple one is
\newcommand{\rinv}{r'}%
\begin{align*}
\rinv & : S \rightarrow \Spacetime \\
\rinv (f,s) & := \left( t \mapsto
\begin{cases}
f(t) \casesif t \leq s \\
f(s) \casesif t > s
\end{cases} \enspace ,\enspace s \right) \fullstop
\end{align*}
We endow S with the sigma algebra generated by \(\rinv\).
\end{definition}

\cite[Theorem \ref{BCH-S2F}]{BCH}, which is a direct consequence of \cite[Theorem IV. 97]{DeMeA}, asserts that a process \(X\) is \Sgood' iff X factors as \(X=X'\circ r\) for a measurable function \(X' : S \rightarrow \R\). So a set \(D \subseteq \Spacetime\) is \Sgood' iff \(D = \invimage{r}{D'}\) for some measurable \(D' \subseteq S\).

Note that \(r(\omega,t) = r(\omega',t')\) implies \((\omega,t) \conc \theta = (\omega',t') \conc \theta\) and therefore
\begin{align*}
\SG & =\invimage{(r \ftimes r)}{\SG'}
\end{align*}
for a set \(\SG' \subseteq S \times S\) which is described by an expression almost identical to that in \autoref{def:SG}. Namely we can overload \(\conc\) to also be the name for the operation whose first operand is an element of \(S\), such that \((\omega,t) \conc \theta = r(\omega,t) \conc \theta\) and note that as \(\cost\) is measurable, \(\timeindexed\Fnull\)-adapted we can write \(\cost = \costS \circ r\) and thus get a cost function \( \costS \) which is defined on \(S\).

Given an optimal \(\xi \in \RSTyd{\initial}{\target}\) we may therefore rephrase our task as having to find a measurable set \(\Gamma \subseteq S\) such that \(\push{r}{\xi}\) is concentrated on \(\Gamma\) and that \(\SG' \cap \left(\init{\Gamma} \times \Gamma\right) = \emptyset\), where \(\init{\Gamma} := \left\{ (\restr{g}{[0,s]},s) : (g,t) \in \Gamma, s < t \right\}\).

Note that for \(\Gamma \subseteq S\) although \(\init{\left(\invimage{r}{\Gamma}\right)}\) is not equal to \( \invimage{r}{\init{\Gamma}} \) we still have \(\SG \cap \left(\invimage{r}{\init{\Gamma}} \times \invimage{r}{\Gamma}\right) = \emptyset \) iff \(\SG \cap \left(\init{(\invimage{r}{\Gamma})} \times \invimage{r}{\Gamma}\right) = \emptyset \).

One of the main ingredients of the proof of \cite[Theorem \ref{BCH-GlobalLocal}]{BCH} and of our \autoref{thm:monotonicity} is a procedure whereby we accumulate many infinitesimal changes to a given randomized stopping time \(\xi\) to build a new stopping time \(\xi^\pi\). The guiding intuition for the authors is to picture these changes as replacing certain \enquote{branches} of the stopping time \(\xi\) by different branches. Some of these branches will actually enter the statement of a somewhat stronger theorem (\autoref{thm:relative-monotonicity} below), so we begin by describing these.
Our way to get a handle on \enquote{branches} -- i.e.\ infinitesimal parts of a randomized stopping time -- is to describe them through a disintegration (wrt \(\Blaw{0}{\initial}\)) of the randomized stopping time.
We need the following statement from \cite{BCH} which should also serve to provide more intuition on the nature of randomized stopping times.

\begin{lemma}
\cite[Theorem \ref{BCH-thm:equiv RST}]{BCH}
\label{lem:equiv RST}
Let \(\xi\) be a measure on \(\Spacetime\). Then \( \xi \in \RSTy{\initial} \) iff there is a disintegration \((\xi_{\omega})_{\omega \in \Paths}\) of \(\xi\) wrt \(\Blaw{0}{\initial}\) such that \((\omega,t) \mapsto \xi_\omega([0,t])\) is measurable, \(\timeindexed\Fnull\)-adapted and maps into \([0,1]\).
\end{lemma}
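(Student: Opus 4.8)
The statement is \cite[Theorem~\ref{BCH-thm:equiv RST}]{BCH}, so the plan is to reproduce the structure of that argument. First I would split off the part that has nothing to do with \eqref{eq:RSTadapted}: the domination $\push{(\proj{\Paths})}{\xi} \le \Blaw{0}{\initial}$ is exactly what is needed to disintegrate $\xi$ over $\Blaw{0}{\initial}$, for with $f := \frac{d\push{(\proj{\Paths})}{\xi}}{d\Blaw{0}{\initial}}$ (so that $0 \le f \le 1$ $\Blaw{0}{\initial}$-a.s.) one obtains a measurable kernel $(\xi_\omega)_{\omega \in \Paths}$ with $\xi = \int \xi_\omega \d{\Blaw{0}{\initial}}(\omega)$ and $\xi_\omega(\Time) = f(\omega) \le 1$; hence $(\omega,t) \mapsto \xi_\omega([0,t])$ is jointly measurable, takes values in $[0,1]$, and is increasing and right-continuous in $t$. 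This disintegration is $\Blaw{0}{\initial}$-a.e.\ unique, and conversely any kernel with $\xi_\omega(\Time) \le 1$ reassembles to a measure on $\Spacetime$ with $\Paths$-marginal dominated by $\Blaw{0}{\initial}$. So the only genuine content is the equivalence of \eqref{eq:RSTadapted} with $\timeindexed{\Fnull}$-adaptedness of $(\omega,t) \mapsto \xi_\omega([0,t])$.

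The bridge between these two formulations is a measurability reformulation of \eqref{eq:RSTadapted}, which is in fact already written out in the discussion following \autoref{def:RST}. Recalling from \autoref{lem:cExp} that $\cExp{s}{G}$ is a version of $\E_{\Blaw{0}{\initial}}[G \mid \F^0_s]$, I would disintegrate \eqref{eq:RSTadapted} and set $h_F(\omega) := \int F(r)\d{\xi_\omega}(r)$ (a bounded function of $\omega$); the tower property then turns \eqref{eq:RSTadapted} into
\begin{align*}
\int \bigl( h_F - \E[h_F \mid \F^0_s] \bigr)(\omega)\, G(\omega)\, \d{\Blaw{0}{\initial}}(\omega) = 0 \qquad \text{for all } G \in C_b(\Paths).
\end{align*}
Since $C_b(\Paths)$ is measure-determining on the Polish space $\Paths$ and the first factor is bounded, this is equivalent to $h_F = \E[h_F \mid \F^0_s]$ holding $\Blaw{0}{\initial}$-a.s., i.e.\ to $h_F$ being $\F^0_s$-measurable, for every $s > 0$ and every $F \in C_b(\Time)$ supported on $[0,s]$.

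It then remains to pass between such test functions $F$ and the indicators $\indicator{[0,t]}$, whose $\xi_\omega$-integrals are the quantities $\xi_\omega([0,t])$. In the direction where one assumes adaptedness and wants \eqref{eq:RSTadapted}, I would approximate a continuous $F$ supported on $[0,s]$ uniformly by step functions $\sum_i c_i\,\indicator{\lorc{a_i,b_i}}$ with $b_i \le s$; since $\int \indicator{\lorc{a_i,b_i}}\d{\xi_\omega} = \xi_\omega([0,b_i]) - \xi_\omega([0,a_i])$ is $\F^0_{b_i} \subseteq \F^0_s$-measurable by hypothesis, while $\xi_\omega(\Time) \le 1$ makes the approximation uniform in $\omega$, the function $h_F$ is $\F^0_s$-measurable, and reversing the computation above recovers \eqref{eq:RSTadapted}. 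In the converse direction one approximates $\indicator{[0,t]}$ from above by continuous $F_n$ supported on $[0, t+\tfrac1n]$, so that $h_{F_n}$ decreases pointwise to $\omega \mapsto \xi_\omega([0,t])$ while each $h_{F_n}$ is $\F^0_{t+1/n}$-measurable.

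The main obstacle lies in the very last step. This squeeze only shows that $\omega \mapsto \xi_\omega([0,t])$ is $\F^0_{t+}$-measurable -- a continuous test function cannot reproduce the jump of $\indicator{[0,t]}$ exactly -- whereas the lemma asserts honest $\timeindexed{\Fnull}$-adaptedness, and $\F^0_{t+}$ and $\F^0_t$ genuinely differ for the canonical process on $\Paths$. Because the disintegration is determined only up to a $\Blaw{0}{\initial}$-null set, the fix is to exhibit a genuinely $\timeindexed{\Fnull}$-adapted version of $(\omega,t) \mapsto \xi_\omega([0,t])$ (the same phenomenon that lets one replace the hitting time of an open set by the a.s.-equal hitting time of its closure); organizing this consistently in $t$, with right-continuity and monotonicity, is the delicate point of \cite[Theorem~\ref{BCH-thm:equiv RST}]{BCH}. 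The remaining ingredients -- existence and joint measurability of disintegration kernels on Polish spaces, $C_b$ being measure-determining, uniform approximation of bounded functions by step functions -- are routine, so in the end I would simply cite \cite{BCH} for this lemma rather than reprove it.
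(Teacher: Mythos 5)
Your proposal matches the paper: the lemma is quoted from \cite{BCH} and the paper offers no proof beyond the informal discussion following \autoref{def:RST}, which is exactly the disintegration/orthogonality/limit argument you reconstruct. You also correctly isolate the one genuinely delicate point (continuous test functions only yield $\F^0_{t+}$-measurability, so one must select a properly adapted version of $(\omega,t)\mapsto\xi_\omega([0,t])$), and deferring that detail to \cite{BCH}, as you do, is precisely what the paper does as well.
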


Using \autoref{lem:equiv RST} above let us fix for the rest of this section both \(\xi \in \RSTyd{\initial}{\target}\) and a disintegration \(\left(\xi_{\omega}\right)_{\omega \in \Paths}\) with the properties above. Both \autoref{def:CRST} below and \autoref{thm:relative-monotonicity} implicitly depend on this particular disintegration and we emphasize that whenever we write \(\xi_{\omega}\) in the following we are always referring to the same fixed disintegration with the properties given in \autoref{lem:equiv RST}.
Note that the measurability properties of \(\left(\xi_{\omega}\right)_{\omega \in \Paths}\) imply that for any \( I \subseteq [0,s] \) we can determine \( \xi_\omega(I) \) from \( \restr{\omega}{[0,s]} \) alone. For \((f,s) \in S\) we will again overload notation and use \(\xi_{(f,s)} \) to refer to the measure on \([0,s]\) which is equal to \(\mrestr{\left(\xi_\omega\right)}{[0,s]}\) for any \( \omega \in \Paths \) such that \(r(\omega,s) = (f,s)\).

\begin{definition}[conditional randomized stopping time]
\label{def:CRST}
Let \((f,s) \in S\). We define a new randomized stopping time \(\xi^{(f,s)} \in \RSTt{s}\) by setting
\begin{align}
\label{eq:CRST}
\begin{aligned}
\xi^{(f,s)}_{\omega} & :=
\begin{cases}
\frac{1}{1-\xi_{(f,s)}([0,s])} \mrestr{\left(\xi_{(f,s) \conc \omega}\right)}{(s,\infty)} \casesif \xi_{(f,s)}([0,s]) < 1 \\
\delta_s \casesif \xi_{(f,s)}([0,s]) = 1
\end{cases} \\[2mm]
\tsint F \d\xi^{(f,s)} & := \tsiint F(\omega,t) \d\xi^{(f,s)}_\omega(t) \d\Blaw s 0 (\omega)
\end{aligned}
\end{align}
for all bounded measurable \(F : \Spacetime[s] \rightarrow \R\), i.e. \((\xi^{(f,s)}_{\omega})_{\omega \in \Paths[s]}\) is the disintegration of \( \xi^{(f,s)} \) wrt \(\Blaw{s}{0}\).

\end{definition}
Here \(\delta_s\) is the Dirac measure concentrated at \(s\). Really, the definition in the case where \( \xi_{(f,s)}([0,s]) = 1 \) is somewhat arbitrary -- it's more a convenience to avoid partially defined functions. What we will use is that \( \left(1-\xi_{(f,s)}([0,s])\right) \xi^{(f,s)}_\omega = \mrestr{\left(\xi_{(f,s) \conc \omega}\right)}{(s,\infty)} \).

\begin{definition}[relative Stop-Go pairs]
\label{def:SGxi}
The set \(\SG^\xi\) consists of all \(\left((f,t), (g,t)\right) \in S \times S\) (again the times have to match) such that either
\begin{align}
\label{eq:SGxi}
\costS(f,t) + \tsint \cost((g,t) \conc \theta, u) \d{\xi^{(f,t)}}(\theta,u) < \costS(g,t) + \tsint \cost((f,t) \conc \theta, u) \d{\xi^{(f,t)}}(\theta,u)
\end{align}
or any one of
\begin{enumerate}
\item\label{it:xilt1}    \(\xi^{(f,t)}\left(\Spacetime\right) < 1\) or \(\tsint s^{p_0} \d{\xi^{(f,t)}}(\theta,s) = \infty\)
\item\label{it:intinfty} the integral on the right hand side equals \(\infty\)
\item\label{it:intundef} either of the integrals is not defined
\end{enumerate}
holds.
We also define
\begin{align}
\label{eq:SGw}
\SGw := \SG^\xi \cup \left\{(f,s) \in S : \xi_{(f,s)}([0,s]) = 1 \right\} \times S
\end{align}
\end{definition}
\autoref{lem:xifs} below says that the numbered cases above are exceptional in an appropriate sense and one may consider them a technical detail. Note that when we say \(\left((f,t),(g,t)\right) \in \SG^\xi\) we are implicitly saying that \( \xi_{(f,t)}([0,t]) < 1 \).

Note that the sets \(\SG^\xi\) and \(\SGw\) are measurable (in contrast to \(\SG\), which may be more complicated).

\begin{definition}
We call a measurable set \(F \subseteq S\) evanescent if \(\invimage{r}{F}\) is evanescent, that is, if \(\Blaw{0}{\initial}\left(\proj{\Paths}\left[\invimage{r}{F}\right]\right) = 0\).
\end{definition}

\begin{lemma}
\cite[Lemma \ref{BCH-lem:xifs}]{BCH}
\label{lem:xifs}
Let \(F: \Spacetime \rightarrow \R\) be some measurable function for which \(\tsint F \d{\xi} \in \R\).
Then the following sets are evanescent.
\begin{itemize}
\item \(\left\{ (f,s) \in S : \xi^{(f,s)}\left(\Spacetime\right) < 1 \right\}\)
\item \(\left\{ (f,s) \in S : \tsint F((f,s) \conc \theta,u) \d{\xi^{(f,s)}}(\theta,u) \not\in \R \right\}\)
\end{itemize}
\end{lemma}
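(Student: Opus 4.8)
The plan is to rewrite both quantities appearing in the statement --- the total mass \(\xi^{(f,s)}(\Spacetime)\) and the integral \(\tsint F((f,s)\conc\theta,u)\d{\xi^{(f,s)}}(\theta,u)\) --- as ordinary conditional expectations along the canonical filtration \(\timeindexed{\Fnull}\) on \(\Paths\), and then to invoke the principle that the conditional-expectation process of a \(\Blaw{0}{\initial}\)-negligible event is evanescently \(0\), and that of a \(\Blaw{0}{\initial}\)-integrable function evanescently finite. The bridge between the two pictures is the identity \(\bigl(1-\xi_{(f,s)}([0,s])\bigr)\xi^{(f,s)}_\omega=\restr{(\xi_{(f,s)\conc\omega})}{(s,\infty)}\) recorded after \autoref{def:CRST}, together with the explicit representation \(\cExp{s}{H}(\omega)=\tsint H((\omega,s)\conc\theta)\d{\Blaw{s}{0}}(\theta)\) of \autoref{lem:cExp}.

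Fix a stem \((f,s)\in S\) with \(\xi_{(f,s)}([0,s])<1\) and pick any \(\omega\in\Paths\) with \(r(\omega,s)=(f,s)\); the remaining stems, on which \(\xi^{(f,s)}_\cdot=\delta_s\), automatically satisfy \(\xi^{(f,s)}(\Spacetime)=1\) (so they do not meet the first set) and cause no trouble for the second either. Integrating the displayed identity against \(\Blaw{s}{0}\) and using \autoref{lem:cExp} gives \(\xi^{(f,s)}(\Spacetime)=\bigl(1-\xi_{(f,s)}([0,s])\bigr)^{-1}\cExp{s}{\xi_\cdot((s,\infty))}(\omega)\); since the fixed disintegration takes values in subprobabilities (\(\xi_\cdot(\Time)\le 1\) everywhere) and \(\omega''\mapsto\xi_{\omega''}([0,s])\) is \(\F^0_s\)-measurable with value \(\xi_{(f,s)}([0,s])\) on the stem, one reads off that \(\xi^{(f,s)}(\Spacetime)<1\) precisely when \(\cExp{s}{\indicator{N}}(\omega)>0\), where \(N:=\{\,\omega''\in\Paths:\xi_{\omega''}(\Time)<1\,\}\) is \(\Blaw{0}{\initial}\)-negligible because \(\xi(\Spacetime)=1\). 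In exactly the same way, \(\tsint F((f,s)\conc\theta,u)\d{\xi^{(f,s)}}(\theta,u)=\bigl(1-\xi_{(f,s)}([0,s])\bigr)^{-1}\cExp{s}{H_s}(\omega)\) with \(H_s(\omega''):=\int_{(s,\infty)}F(\omega'',u)\d{\xi_{\omega''}}(u)\), and \(|H_s|\le\bar H\) pointwise, uniformly in \(s\), for the \(s\)-independent function \(\bar H(\omega''):=\int_{\Time}|F(\omega'',u)|\d{\xi_{\omega''}}(u)\), which lies in \(L^1(\Blaw{0}{\initial})\) because \(\int\bar H\d{\Blaw{0}{\initial}}=\tsint|F|\d{\xi}<\infty\) (the hypothesis \(\tsint F\d{\xi}\in\R\) being precisely absolute integrability). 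Hence, after pulling back through \(r\), the first set of the lemma is contained in \(\invimage{r}{\{(\omega,s):\cExp{s}{\indicator{N}}(\omega)>0\}}\) and the second in \(\invimage{r}{\{(\omega,s):\cExp{s}{\bar H}(\omega)=\infty\}}\), and it remains to show that these two subsets of \(\Spacetime\) are evanescent.

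Because \(\indicator{N}\) and \(\bar H\) do not depend on \(s\), the processes \(s\mapsto\cExp{s}{\indicator{N}}\) and \(s\mapsto\cExp{s}{\bar H}\) are martingales closed in \(L^1(\Blaw{0}{\initial})\), so for each fixed \(s\) one has \(\cExp{s}{\indicator{N}}=0\) and \(\cExp{s}{\bar H}<\infty\) almost surely. The real work is to upgrade these \enquote{for each \(s\)} statements to \enquote{for all \(s\), a.s.}, i.e.\ to genuine evanescence: one passes to a right-continuous version of each martingale, for which \enquote{for all \(s\)} follows from \enquote{for all rational \(s\)} together with, for the finiteness assertion, Doob's maximal inequality on each interval \([0,T]\); and one checks that the explicit version \(\cExp{s}{\cdot}\) supplied by \autoref{lem:cExp} coincides with that right-continuous version outside an evanescent set. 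This last identification --- which rests on the projection theory of \cite{DeMeA}, the same machinery behind the remark that Assumption \assref{ass:cadapted} imposes no real restriction --- is the step I expect to be delicate; it is the exact analogue of a lemma in \cite{BCH}, from which the present statement is quoted. Granting it, a measurable subset of \(S\) is evanescent exactly when its \(r\)-preimage is, so transporting the two evanescent subsets of \(\Spacetime\) back through \(r\) completes the proof, the degenerate stems \(\xi_{(f,s)}([0,s])=1\) being handled by a minor variant.
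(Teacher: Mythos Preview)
The paper does not actually prove this lemma; it simply defers to \cite{BCH}. Your sketch is a faithful outline of how such a proof goes: rewrite both quantities as the explicit conditional expectations of \autoref{lem:cExp}, reduce to showing that the conditional expectation of a \(\Blaw{0}{\initial}\)-null indicator is evanescently zero and that of an \(L^1\) function evanescently finite, and then upgrade ``for each \(s\)'' to ``for all \(s\), a.s.'' via martingale regularization / optional projection. That is the right mechanism, and the identification of \(\tsint F\d\xi\in\R\) with \(\tsint|F|\d\xi<\infty\), hence \(\bar H\in L^1(\Blaw{0}{\initial})\), is correct.

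One wording quibble and one genuine caveat. The quibble: your ``precisely when'' is an overstatement. What the calculation gives is \(\xi^{(f,s)}(\Spacetime)<1\) iff \(\cExp{s}{1-\xi_\cdot(\Time)}(\omega)>0\), and since \(1-\xi_\cdot(\Time)\le\indicator{N}\) this yields only the inclusion \(\{\xi^{(f,s)}(\Spacetime)<1\}\subseteq\{\cExp{s}{\indicator{N}}>0\}\)---which is exactly the direction you need and is what you state later, so no harm done. The caveat: your claim that degenerate stems ``cause no trouble for the second [set] either'' is not obviously true. With the convention \(\xi^{(f,s)}_\cdot=\delta_s\) one gets \(\tsint F((f,s)\conc\theta,u)\d{\xi^{(f,s)}}(\theta,u)=\tsint F((f,s)\conc\theta,s)\d{\Blaw{s}{0}}(\theta)\), and the hypothesis \(\tsint F\d\xi\in\R\) says nothing about integrability of \(F(\cdot,s)\) at a time \(s\) that \(\xi\) does not charge; for suitably pathological \(F\) this can be infinite on a non-evanescent set of degenerate stems. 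This is immaterial for the paper---the only application is in the proof of \autoref{Invisible2}, where \(\bar\pi\) is concentrated on \(\invimage{(r\times r)}{\SG^\xi}\) and membership in \(\SG^\xi\) forces the first stem to be non-degenerate---but your ``minor variant'' does not dispose of it, and a clean statement would either restrict to non-degenerate stems or change the convention.
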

\begin{proof}See \cite{BCH}.\end{proof}

\begin{lemma}[{\cite[Lemma \ref{BCH-lem:SGsubSGW}]{BCH}}]
\label{lem:SGsubSGW}
\begin{align*}
\SG' \subseteq \SGw
\end{align*}
\end{lemma}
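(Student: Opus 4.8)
Here is the plan.

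\medskip

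The plan is to fix a pair $((f,t),(g,t))\in\SG'$ and show that it lies in $\SGw$, disposing first of the degenerate cases. If $\xi_{(f,t)}([0,t])=1$, then $(f,t)$ lies in $\{(f,s)\in S:\xi_{(f,s)}([0,s])=1\}$ and the pair belongs to the second component of $\SGw$. If instead $\xi_{(f,t)}([0,t])<1$ (so that $\xi^{(f,t)}$ is given by the first branch of \eqref{eq:CRST}) but one of the three exceptional clauses in \autoref{def:SGxi} holds, then by definition $((f,t),(g,t))\in\SG^\xi\subseteq\SGw$. So it remains to treat the case where $\xi_{(f,t)}([0,t])<1$, where $\xi^{(f,t)}$ has mass $1$ and finite $p_0$-moment, and where the two cost integrals in \eqref{eq:SGxi} are both defined with $\int\cost((f,t)\conc\theta,u)\d\xi^{(f,t)}(\theta,u)<\infty$; in this case I must establish the strict inequality \eqref{eq:SGxi}. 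Note that $\costS(f,t),\costS(g,t)\in\R$ because $\cost$ is real-valued.

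The main idea is to write $\xi^{(f,t)}$ as an average of genuine $\timeindexed[s][t]{\F^t}$-stopping times of Brownian motion started at time $t$ and then integrate the inequality defining $\SG'$. Using \autoref{lem:equiv RST}, fix a disintegration $(\xi^{(f,t)}_\omega)_\omega$ of $\xi^{(f,t)}$ with respect to $\Blaw{t}{0}$ for which $(\omega,s)\mapsto\xi^{(f,t)}_\omega([t,s])$ is $\timeindexed[s][t]{\F^t}$-adapted; since $\xi^{(f,t)}$ has mass $1$, the $\xi^{(f,t)}_\omega$ are probability measures on $[t,\infty)$ for $\Blaw{t}{0}$-a.e.\ $\omega$. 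For $a\in(0,1)$ put
\[ \rho_a(\omega):=\inf\{s\geq t:\xi^{(f,t)}_\omega([t,s])\geq a\}. \]
Right-continuity of $s\mapsto\xi^{(f,t)}_\omega([t,s])$ gives $\{\rho_a\leq s\}=\{\omega:\xi^{(f,t)}_\omega([t,s])\geq a\}\in\F^t_s$, so each $\rho_a$ is an $\timeindexed[s][t]{\F^t}$-stopping time; moreover $\int_0^1\delta_{\rho_a(\omega)}\d a=\xi^{(f,t)}_\omega$, whence
\[ \int_0^1\!\!\int F(\theta,\rho_a(\theta))\d\Blaw{t}{0}(\theta)\d a=\int F\d\xi^{(f,t)} \]
for every measurable $F:\Spacetime[t]\rightarrow\R$ for which the right-hand side is defined.

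Next I would apply this averaging identity to $F(\theta,u)=u^{p_0}$, to $F=\indicator{\{u=t\}}$ and $F=\indicator{\{u=\infty\}}$, and to the positive and negative parts of $\cost((f,t)\conc\cdot)$ and $\cost((g,t)\conc\cdot)$, and read off from the standing hypotheses that for a.e.\ $a\in(0,1)$ the stopping time $\sigma:=\rho_a$ meets all the conditions in \autoref{def:SG} — $\Blaw{t}{0}(\rho_a=t)<1$, $\Blaw{t}{0}(\rho_a=\infty)=0$, $\int\rho_a^{p_0}\d\Blaw{t}{0}<\infty$ — and that both sides of \eqref{eq:SG} are real numbers. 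For each such $a$, membership of the pair in $\SG'$ gives
\[ \costS(f,t)+\int\cost((g,t)\conc\theta,\rho_a(\theta))\d\Blaw{t}{0}(\theta)<\costS(g,t)+\int\cost((f,t)\conc\theta,\rho_a(\theta))\d\Blaw{t}{0}(\theta). \]
Since the inequality is strict and holds for a.e.\ $a$ in the probability interval $(0,1)$, integrating in $a$ and using the averaging identity to reassemble the $\xi^{(f,t)}$-integrals yields exactly \eqref{eq:SGxi}; thus $((f,t),(g,t))\in\SG^\xi\subseteq\SGw$.

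I expect the real work to sit in the clause ``the standing hypotheses ensure that for a.e.\ $a$ both sides of \eqref{eq:SG} are real'': this is precisely the bookkeeping showing that the three exceptional clauses of \autoref{def:SGxi} capture every way the scheme above can degenerate. One has to check, outside those clauses, that $\int\cost((f,t)\conc\theta,\rho_a(\theta))\d\Blaw{t}{0}$ and $\int\cost((g,t)\conc\theta,\rho_a(\theta))\d\Blaw{t}{0}$ are not merely defined but finite for a.e.\ $a$ — using finiteness of $\costS(f,t),\costS(g,t)$ for the boundary terms and Tonelli applied to $\cost^\pm$ for the integral terms — and in particular to rule out $\int\cost((g,t)\conc\theta,u)\d\xi^{(f,t)}(\theta,u)=+\infty$, which, run in the contrapositive, again reduces to the averaging argument. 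Everything else (the disintegration of \autoref{lem:equiv RST}, the quantile construction, and the a.e.\ statements, cf.\ \autoref{lem:xifs}) is routine in this circle of ideas; the argument parallels that in \cite{BCH}.
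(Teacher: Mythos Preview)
The paper does not give its own proof of this lemma but simply cites \cite{BCH}. Your plan --- decompose the conditional randomized stopping time $\xi^{(f,t)}$ into a one-parameter family of genuine $\timeindexed[s][t]{\F^t}$-stopping times via the quantile construction, apply the defining inequality of $\SG'$ to each, and average over the parameter --- is precisely the argument carried out in \cite{BCH}, and your identification of the integrability bookkeeping around the exceptional clauses of \autoref{def:SGxi} as the place where the real work sits is accurate.
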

\begin{proof}
Can be found in \cite{BCH}. Note that they fix \(p_0 = 1\).
\end{proof}

\begin{theorem}
\label{thm:relative-monotonicity}
Assume that \( \xi \) is a solution of \probref{OptStop'}. Then there is a measurable set \( \Gamma \subseteq S \) such that \( \push{r}{\xi}(\Gamma) = 1 \) and
\begin{align}
\label{eq:relative-monotonicity}
\SGw \cap \left(\init{\Gamma} \times \Gamma\right) = \emptyset \fullstop
\end{align}
\end{theorem}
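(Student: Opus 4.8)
The plan is to argue by contradiction along the lines of \cite[\autoref{BCH-GlobalLocal}]{BCH}: if no admissible $\Gamma$ exists, then $\xi$ can be modified into a competitor $\xi^\pi \in \RSTyd{\initial}{\target}$ of strictly smaller cost, contradicting the assumed optimality. First I would reduce the theorem to a single measure-theoretic claim. Fix a measurable $\Gamma_0 \subseteq S$ which exactly carries $\push{r}{\xi}$, so that $\init{\Gamma_0}$ consists of genuine initial segments of $\xi$-typical paths. Using \autoref{lem:xifs} (applicable since $\tsint \cost \d{\xi} \in \R$ by Assumption~\assref{ass:wellposed}), a disintegration of $\tsint s^{p_0}\d{\xi} < \infty$ (Assumption~\assref{ass:finitemoment}), and the trivial remark that $\xi_{(f,s)}([0,s]) < 1$ whenever $(f,s)$ is an initial segment of a path that $\xi$ stops strictly later, one may delete an evanescent subset of $\Gamma_0$ so that the second summand in the definition \eqref{eq:SGw} of $\SGw$ no longer meets $\init{\Gamma_0}$ and so that every pair $\bigl((f,t),(g,t)\bigr) \in \SGw$ with $(f,t) \in \init{\Gamma_0}$ lies in $\SG^\xi$, has $\xi_{(f,t)}([0,t]) < 1$, and obeys the genuine strict inequality \eqref{eq:SGxi}. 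Now put $R := \Gamma_0 \cap \proj{2}\bigl(\SGw \cap (\init{\Gamma_0}\times S)\bigr)$, a projection of an analytic set and hence universally measurable, and set $\Gamma := \Gamma_0 \setminus R$. Since $\init{\Gamma} \subseteq \init{\Gamma_0}$, any element of $\SGw \cap (\init{\Gamma}\times\Gamma)$ would have its second coordinate in $R$, hence not in $\Gamma$ — so $\Gamma$ satisfies \eqref{eq:relative-monotonicity} automatically, and (passing, if necessary, to a Borel subset of $\Gamma$ of the same mass) the only thing left to prove is that $\push{r}{\xi}(R) = 0$.

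Assume, towards a contradiction, that $\push{r}{\xi}(R) > 0$. By the Jankov--von Neumann selection theorem there is a universally measurable map $R \ni (g,t) \mapsto (f_{g,t},t) \in \init{\Gamma_0}$ with $\bigl((f_{g,t},t),(g,t)\bigr) \in \SG^\xi$ — the times match because membership in $\SG^\xi$ forces it. Pushing $\mrestr{\push{r}{\xi}}{R}$ forward under $(g,t) \mapsto \bigl((f_{g,t},t),(g,t)\bigr)$ yields a positive measure $\pi$ on $S \times S$ that is concentrated on $\SG^\xi$ and has second marginal $\mrestr{\push{r}{\xi}}{R}$. After multiplying $\pi$ by a suitable bounded positive density I would arrange, keeping $\pi$ of positive mass, that its first marginal is absolutely continuous, with bounded density, with respect to the measure recording how much bush-mass $\bigl(1-\xi_{(f,t)}([0,t])\bigr)$ is available at each $(f,t) \in S$; this compatibility of supply and demand is what makes the next step possible.

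Following the construction in the proof of \cite[\autoref{BCH-Invisible2}]{BCH}, I would then define $\xi^\pi$ by peeling off $\xi$, at each $(f,t)$, the amount of the conditional bush $\xi^{(f,t)}$ of \autoref{def:CRST} dictated by the first marginal of $\pi$ — so that this mass now simply stops at time $t$ — and re-attaching an equal amount of spatially translated copies of $\xi^{(f,t)}$ at the matched points $(g,t)$, which previously stopped at time $t$. That $\xi^\pi$ is again a randomized stopping time, and in particular still has time-marginal $\target$, is precisely the content imported from \cite[\autoref{BCH-Invisible2}]{BCH}, with the one adaptation noted in the sketch above: since the stop-go pairs here have matching \emph{time} components (rather than matching \emph{spatial} components, as in the Skorokhod setting), one invokes the space-homogeneity of $B$ to plant the bush $\xi^{(f,t)}$ at the other location $g(t)$, and it is again the equality of times that makes the mass removed from $\{\tau = t\}$ along the $(f,t)$-branches cancel the mass added to $\{\tau = t\}$ along the $(g,t)$-branches, leaving $\target$ unchanged. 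Finally, by \autoref{def:CRST}, \autoref{def:SGxi} and the construction,
\begin{multline*}
\tsint \cost \d{\xi^\pi} - \tsint \cost \d{\xi} = \\
\tsint \Bigl( \costS(f,t) + \tsint \cost((g,t)\conc\theta,u) \d{\xi^{(f,t)}}(\theta,u) - \costS(g,t) - \tsint \cost((f,t)\conc\theta,u) \d{\xi^{(f,t)}}(\theta,u) \Bigr) \d{\pi}((f,t),(g,t)),
\end{multline*}
and the integrand is strictly negative $\pi$-almost everywhere by \eqref{eq:SGxi} while $\pi$ has positive total mass; hence $\tsint \cost \d{\xi^\pi} < \tsint \cost \d{\xi}$, contradicting the optimality of $\xi$. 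Therefore $\push{r}{\xi}(R) = 0$, which completes the argument. The main obstacle is, as in \cite{BCH}, the bookkeeping behind the measurable selection and reweighting that fit the first marginal of $\pi$ to the available bush supply, and — above all — the verification that $\xi^\pi$ is a genuine element of $\RSTyd{\initial}{\target}$; this is exactly the place where space-homogeneity of $B$ takes over the role played by time-homogeneity (the Feller property) in the Skorokhod-embedding argument, while the strict cost decrease itself is immediate from the defining inequality \eqref{eq:SGxi} of $\SG^\xi$.
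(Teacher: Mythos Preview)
Your overall strategy --- contradict optimality by exhibiting a competitor $\xi^\pi$ of strictly smaller cost --- is the right one, and your identification of the role of space-homogeneity is correct. But there is a genuine gap at the step where you build $\pi$ by Jankov--von~Neumann selection and then invoke the construction of $\xi^\pi$ from \cite[\autoref{BCH-Invisible2}]{BCH}. That construction requires $\pi \in \JOIN(\push{r}{\xi})$ in the sense of \autoref{def:JOIN}: the (conditional) first marginals of $\pi$ must lift to genuine elements of $\RSTy{\initial}$. This is what makes the non-negativity condition \eqref{eq:nonegativebushes} of the \nameref{lem:gardener} verifiable (one cannot peel off more bush at $(f,t)$ than $\xi$ actually carries there) and what licenses the use of \autoref{lem:markov} in the calculation. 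A measure obtained by pushing $\mrestr{\push{r}{\xi}}{R}$ forward along a measurable selection has no reason to have this structure: the selection may well send a set of positive $\push{r}{\xi}$-mass to a single point $(f_0,t_0)\in S$, in which case the first marginal is a Dirac mass whose lift to $\Spacetime$ is singular with respect to $\Blaw{0}{\initial}$, and the non-negativity check fails outright. Your proposed fix --- multiply $\pi$ by a bounded density so that its first marginal becomes absolutely continuous with respect to ``the measure recording how much bush-mass is available'' --- is not well-posed (there is no canonical reference measure on $S$ against which to test absolute continuity), and in any case multiplying by a bounded density cannot alter the absolute continuity class of the first marginal.

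The paper handles precisely this obstruction by decoupling the two halves of the argument. First \autoref{Invisible2} shows that \emph{if} some $\pi\in\JOIN(\push{r}{\xi})$ charges $\SG^\xi$, then $\xi$ is not optimal; here the $\JOIN$ hypothesis is exactly what supplies the randomized-stopping-time structure on the first marginal. Second, \autoref{KeLe} --- a nontrivial Kellerer-type duality imported from \cite{BCH} --- converts ``no $\pi\in\JOIN$ charges $\SG^\xi$'' into a covering $\SG^\xi\subseteq (F\times S)\cup(S\times N)$ with $F$ evanescent and $\push{r}{\xi}(N)=0$, from which $\Gamma$ is read off directly. The implication you are trying to shortcut --- from $\push{r}{\xi}(R)>0$ to the existence of a $\pi\in\JOIN$ that charges $\SG^\xi$ --- is exactly (one direction of) \autoref{KeLe}, and it cannot be replaced by a bare measurable selection.
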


\newcommand{\JOIN}{\mathsf{JOIN}_{\initial}}
\newcommand{\Id}{\mathsf{Id}}

Our  argument follows \cite[Theorem \ref{BCH-GlobalLocal2}]{BCH}. We also need the following two auxilliary propositions, which in turn require some definitions.
\begin{definition}
\label{def:JOIN}
Let \(\upsilon\) be a probability measure on some measure space \(Y\). The set \(\JOIN(\upsilon)\) is the set of all subprobability measures \(\pi\) on \((\Spacetime) \times Y\) such that
\begin{align*}
& \push{(\proj{Y})}{\pi} \leq \upsilon \quad \text{ and} \\
& \push{(\proj{\Spacetime})}{\mrestr{\pi}{\Spacetime \times D}} \in \RSTy{\initial} \quad \text{for all measurable } D \subseteq Y \fullstop
\end{align*}
\end{definition}

\begin{proposition}
\label{Invisible2}
Let \( \xi \) be a solution of \probref{OptStop'}. Then \( \push{\left(r \ftimes \Id\right)}{\pi}(\SG^\xi) = 0 \) for all \( \pi \in \JOIN(\push{r}{\xi}) \).
\end{proposition}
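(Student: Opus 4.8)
The plan is to argue by contradiction from the optimality of $\xi$. Suppose $\push{(r\ftimes\Id)}{\pi}(\SG^\xi)>0$ for some $\pi\in\JOIN(\push{r}{\xi})$; I would then build from $\pi$ a competitor $\xi^\pi\in\RSTyd{\initial}{\target}$ with strictly smaller cost, contradicting that $\xi$ solves \probref{OptStop'}. The construction is the \enquote{branch transplant} of \cite[\autoref{BCH-Invisible2}]{BCH}, modified in the one way indicated after \autoref{thm:monotonicity}: a relative Stop-Go pair $((f,t),(g,t))\in\SG^\xi$ now forces the \emph{times} to coincide rather than the space values, so the conditional randomized stopping time $\xi^{(f,t)}$ gets replanted at a different space point at the \emph{same} time, and space-homogeneity of $B$ takes over the role played by time-homogeneity in \cite{BCH}.

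First I would pass to a well-behaved sub-measure of $\pi$. By \autoref{lem:xifs}, the sets on which $\xi^{(f,t)}(\Spacetime)<1$, on which $\tsint s^{p_0}\d{\xi^{(f,t)}}(\theta,s)=\infty$, or on which the integrals entering \eqref{eq:SGxi} fail to be real, are all evanescent, hence $\push{r}{\xi}$-null, and may be discarded; on what remains, membership in $\SG^\xi$ is witnessed by the genuine strict inequality \eqref{eq:SGxi} with both sides finite, so the exceptional alternatives of \autoref{def:SGxi} never occur. Restricting $\pi$ to $(r\ftimes\Id)^{-1}(\SG^\xi)$ and decomposing along the size of the gap between the two sides of \eqref{eq:SGxi}, countable additivity produces a positive-mass piece $\pi'$ on which this gap exceeds a fixed $\delta>0$; a measurable selection furnishes, measurably in the coupling variable, the bush $\xi^{(f,t)}$ to be grafted. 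Finally I would replace $\pi'$ by $\varepsilon\pi'$ for $\varepsilon>0$ small enough that the surgery only ever redirects a fraction of the mass available at each site, so all intermediate weights stay in $[0,1]$ and the total mass is preserved.

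The competitor $\xi^\pi$ is assembled as in \cite{BCH}: for the $\varepsilon\pi'$-selected mass, the part of $\xi$ that continues past a point $(f,t)$ according to $\xi^{(f,t)}$ is redirected to \emph{stop} at $(f,t)$, while the part of $\xi$ that \emph{stops} at the coupled point $(g,t)$ is redirected to continue following $\xi^{(f,t)}$ replanted at location $g(t)$. The time-marginal is unchanged, since the two redirections cancel: at the $(f,t)$-sites the time-marginal of $\xi^{(f,t)}$ is removed and $\delta_t$ is added, while at the $(g,t)$-sites $\delta_t$ is removed and the time-marginal of $\xi^{(f,t)}$ added, leaving $\push{(\proj{\Time})}{\xi^\pi}=\target$. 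The cost strictly decreases: by \eqref{eq:SGxi} and the choice of $\pi'$ each unit of redirected mass lowers it by at least $\delta$, and since on the good set all integrals involved are real and $\cost_-$ is controlled exactly as in the proof of \autoref{thm:existence}, $\tsint\cost\d{\xi^\pi}$ is strictly smaller than $\tsint\cost\d{\xi}$ — by at least $\varepsilon\delta$ times the total mass of $\pi'$.

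The genuine obstacle is to verify $\xi^\pi\in\RSTy{\initial}$, i.e.\ that the adaptedness relation \eqref{eq:RSTadapted}, equivalently the disintegration criterion of \autoref{lem:equiv RST}, survives the transplant. This is exactly where the two structural inputs are indispensable. Because $((f,t),(g,t))\in\SG^\xi$ forces the common time $t$, the piece $\xi^{(f,t)}$ is a randomized stopping time of $B$ restarted at time $t$ in $f(t)$, and space-homogeneity of $B$ ensures that moving it to $g(t)$ again yields a randomized stopping time restarted at time $t$; this is the step where time-homogeneity of the Skorokhod setting in \cite{BCH} is traded for space-homogeneity. And the defining property of $\JOIN(\push{r}{\xi})$ — that $\push{(\proj{\Spacetime})}{\mrestr{\pi}{\Spacetime\times D}}\in\RSTy{\initial}$ for every measurable $D\subseteq S$ — is precisely what makes the whole family of these relocated bushes, weighted and glued according to $\pi$, assemble into an adapted sub-randomized-stopping-time, so that $\xi^\pi$ is again a finite randomized stopping time. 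With $\xi^\pi\in\RSTyd{\initial}{\target}$ of strictly smaller cost we contradict optimality, and conclude $\push{(r\ftimes\Id)}{\pi}(\SG^\xi)=0$.
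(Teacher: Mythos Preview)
Your strategy is the paper's strategy: restrict $\pi$ to $(r\times\Id)^{-1}(\SG^\xi)$, transplant the conditional bush $\xi^{(f,t)}$ from $(f,t)$ to $(g,t)$ (which works because the times agree and $B$ is space-homogeneous), observe that the time marginal is preserved, and read the cost drop off from \eqref{eq:SGxi}. Two points about the execution deserve comment.

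First, the $\varepsilon$-scaling is not how the paper controls the mass, and as you phrase it the device is shaky. You want $\varepsilon$ small enough that ``the surgery only ever redirects a fraction of the mass available at each site'', but the available post-$t$ mass at a first-coordinate site $(f,t)$ is $1-\xi_{(f,t)}([0,t])$, which may be arbitrarily small on $\SG^\xi$; there is no reason a single $\varepsilon$ should work uniformly. The paper avoids this altogether: it builds $\xi^\pi=\tfrac12(\xi_0^\pi+\xi_1^\pi)$ via the \nameref{lem:gardener}, inserting the \emph{exact} survival weight $(1-\xi_\omega([0,s]))$ into the measure $\alpha$ along which branches are replaced. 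With this weight the nonnegativity condition \eqref{eq:nonegativebushes} is verified by a direct calculation (reducing to $\push{(\proj{\Paths})}{\zeta}\leq\Blaw{0}{\initial}$), no $\varepsilon$ is needed, and the $\delta$-gap restriction is likewise unnecessary: one integrates the strict inequality \eqref{eq:SGxi} directly, weighted by the positive factor $(1-\xi_\omega([0,s]))$.

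Second, no measurable selection is required --- the map $(f,t)\mapsto\xi^{(f,t)}$ is explicit from \autoref{def:CRST}. Also, discarding the exceptional cases of \autoref{def:SGxi} via \autoref{lem:xifs} does handle items \ref{it:xilt1} and (for the right-hand integral) \ref{it:intundef}, but the left-hand integral in \eqref{eq:SGxi} mixes both coordinates, so its finiteness is not a direct corollary of \autoref{lem:xifs}; the paper obtains it instead from the bound $\int(1-\xi_\omega([0,s]))\int c_\pm((\eta,t)\conc\tilde\omega,u)\,d\xi^{r(\omega,s)}\,d\bar\pi\leq\int c_\pm\,d\xi$, derived in the course of extending \eqref{eq:Invisible2} from bounded $F$ to $c$.
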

Here we use \(\times\) to denote the Cartesian product map, i.e.\ for sets \(X_i,Y_i\) and functions \(F_i : X_i \rightarrow Y_i\) where \(i \in \{1,2\}\) the map \(F_1 \times F_2 : X_1 \times X_2 \rightarrow Y_1 \times Y_2\) is given by \((F_1 \times F_2)(x_1,x_2) = (F_1(x_1),F_2(x_2))\).
\autoref{Invisible2} is an analogue of \cite[Proposition \ref{BCH-Invisible2}]{BCH} and it is where the material changes compared to \cite{BCH} take place. We will give the proof at the end of this section.

\begin{proposition}
\cite[Proposition \ref{BCH-KeLe}]{BCH}
\label{KeLe}
Let \((Y, \upsilon)\) be a Polish probability space and let \( E \subseteq S \times Y \) be a measurable set. Then the following are equivalent
\begin{enumerate}
\item \( \push{\left(r \ftimes \Id\right)}{\pi}(E) = 0 \) for all \(\pi \in \JOIN(\upsilon)\)
\item \( E \subseteq (F \times Y) \cup (S \times N) \) for some evanescent set \(F \subseteq S\) and a measurable set \(N \subseteq Y\) which satisfies \(\upsilon(N) = 0\).
\end{enumerate}
\end{proposition}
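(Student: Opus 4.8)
The plan is to prove the two implications separately. The implication \enquote{$(2)\Rightarrow(1)$} will be a short direct estimate. Assuming $E\subseteq(F\times Y)\cup(S\times N)$ with $F\subseteq S$ evanescent and $\upsilon(N)=0$, and fixing $\pi\in\JOIN(\upsilon)$, I would use $\invimage{\left(r\ftimes\Id\right)}{F\times Y}=\invimage{r}{F}\times Y$ and $\invimage{\left(r\ftimes\Id\right)}{S\times N}=\Spacetime\times N$ to bound
\[
\push{\left(r\ftimes\Id\right)}{\pi}(E)\ \le\ \push{\proj{\Spacetime}}{\pi}\bigl(\invimage{r}{F}\bigr)+\push{\proj{Y}}{\pi}(N).
\]
The second term is $\le\upsilon(N)=0$; and for the first, taking $D=Y$ in \autoref{def:JOIN} gives $\push{\proj{\Spacetime}}{\pi}\in\RSTy{\initial}$, so its $\Paths$-marginal is $\le\Blaw{0}{\initial}$ and, since $\invimage{r}{F}\subseteq\proj{\Paths}[\invimage{r}{F}]\times\Time$, the first term is $\le\Blaw{0}{\initial}\bigl(\proj{\Paths}[\invimage{r}{F}]\bigr)=0$ by evanescence of $F$.

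For \enquote{$(1)\Rightarrow(2)$} I would argue by contraposition, manufacturing from the failure of (2) some $\pi\in\JOIN(\upsilon)$ with $\push{\left(r\ftimes\Id\right)}{\pi}(E)>0$. The first step is to record that a measurable $F\subseteq S$ is evanescent iff $\push{r}{\zeta}(F)=0$ for every $\zeta\in\RSTy{\initial}$: if $F$ is non-evanescent, then $\invimage{r}{F}$ is a non-evanescent, $\timeindexed{\Fnull}$-adapted subset of $\Spacetime$ (it factors through $r$), and the section theorem (after the usual completion) yields a stopping time $\sigma$ with $\Blaw{0}{\initial}\bigl(\sigma<\infty,\ r(\cdot,\sigma)\in F\bigr)>0$, whose associated randomized stopping time charges $\invimage{r}{F}$. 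Applying this to the fibres $E_y$ and selecting measurably in $y$ (Jankov--von Neumann), I get a measurable family $y\mapsto\zeta_y\in\RSTy{\initial}$ with $\push{r}{\zeta_y}(E_y)>0$ on $N_1:=\{y:E_y\text{ is not evanescent}\}$ (and $\zeta_y$ the immediate-stop rule off $N_1$). If $\upsilon(N_1)>0$, I would take $\pi:=\int_Y\zeta_y\otimes\delta_y\,d\upsilon(y)$: it lies in $\JOIN(\upsilon)$ because the constraints defining $\RSTy{\initial}$ in \autoref{def:RST} are linear and hence stable under $\upsilon$-mixtures, the marginal bound $\le\Blaw{0}{\initial}$ survives as $\upsilon$ has mass $1$, and $\push{\proj{Y}}{\pi}=\upsilon$; moreover $\push{\left(r\ftimes\Id\right)}{\pi}(E)=\int_Y\push{r}{\zeta_y}(E_y)\,d\upsilon(y)>0$, contradicting (1). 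Hence under (1) necessarily $\upsilon(N_1)=0$, i.e.\ $E_y$ is evanescent for $\upsilon$-a.e.\ $y$.

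The hard part will be the remaining case $\upsilon(N_1)=0$: I then need that $F_0:=\bigcup_{y\notin N_1}E_y=\proj{S}[E\cap(S\times N_1^c)]$ is contained in an evanescent measurable set, for then $(F_0,N_1)$ realises (2). The subtlety is that the fibres $E_y$ ($y\notin N_1$) are only \emph{individually} evanescent, and an uncountable union of $\Blaw{0}{\initial}$-null sets need not be null, so this step genuinely uses the geometry of $S$ and of $r$ — each nonempty fibre, pulled back, contains a cylinder $\{\omega:\restr{\omega}{[0,t]}=f\}$, so one is dealing with the evanescent $\sigma$-ideal generated by such cylinders. If $F_0$ were non-evanescent, one would build a $\pi\in\JOIN(\upsilon)$ charging $E$ \emph{not} fibrewise (impossible, since each $\zeta_y\in\RSTy{\initial}$ must miss the evanescent $E_y$) but by an \emph{aggregate} stopping rule: a measurable selection of a pair $(\sigma(\omega),\psi(\omega))$ with $r(\omega,\sigma(\omega))\in E_{\psi(\omega)}$ over the non-null shadow of $\invimage{r}{F_0}$, the selection domain shrunk so that $\psi$ pushes the underlying mass to a measure dominated by $\upsilon$; crucially $\JOIN(\upsilon)$ is large enough to admit such $\pi$ even though their $Y$-fibres need not be randomized stopping times. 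Making this aggregation rigorous — together with the bookkeeping that keeps $\sigma$ a stopping time and the $Y$-marginal absolutely continuous — is precisely the content of \cite[Proposition~\ref{BCH-KeLe}]{BCH}, which I would invoke. This case is the main obstacle; everything else is routine. (Combined with \autoref{Invisible2}, this proposition then yields \autoref{thm:relative-monotonicity}.)
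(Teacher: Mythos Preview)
The paper does not give its own proof of this proposition: it simply records that \enquote{\autoref{KeLe} is proved in \cite{BCH} and we will not repeat the proof here.} Your proposal is therefore not in conflict with the paper --- you supply the routine direction $(2)\Rightarrow(1)$ and a plausible case split for $(1)\Rightarrow(2)$, and then, like the paper, defer the substantive step to \cite{BCH}.

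One remark on presentation: your final appeal is phrased as invoking \cite[Proposition~\ref{BCH-KeLe}]{BCH} itself, which is literally the statement you are proving; this reads as circular. What you mean (and what the paper does) is to defer to the \emph{proof} given in \cite{BCH}, whose key ingredient is an optional-section/measurable-selection argument of Kellerer type that handles exactly the \enquote{aggregate} construction you describe. It would be cleaner either to cite \cite{BCH} for the whole proposition (as the paper does) or, if you want to keep your scaffolding, to point to the specific section-theorem machinery in \cite{BCH} rather than to the proposition itself. Also note that your case $\upsilon(N_1)>0$ is not really a separate branch of the proof: under (1) it cannot occur, and the construction you sketch there (the mixture $\int \zeta_y\otimes\delta_y\,d\upsilon$) is subsumed by the general \enquote{aggregate} construction needed in the remaining case anyway.
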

\autoref{KeLe} is proved in \cite{BCH} and we will not repeat the proof here.

\begin{proof}[Proof of \autoref{thm:relative-monotonicity}]
Using \autoref{Invisible2} we see that \( \push{\left(r \ftimes \Id\right)}{\pi}(\SG^\xi) = 0 \) for all \( \pi \in \JOIN(\push{r}{\xi}) \). Plugging this into \autoref{KeLe} we find an evanescent set \(F_1 \subseteq S\) and a set \( N \subseteq S\) such that \(\push{r}{\xi}(N) = 0\) and \(\SG^\xi \subseteq (F_1 \times S) \cup (S \times N)\).
Defining for any Borel set \(E \subseteq S\) the analytic set
\begin{align*}
E^> := \left\{ (g,t) \in S : \exists s < t, \left(\restr{g}{[0,s]},s\right) \in E \right\}
\end{align*}
we observe that \( \init{\left(\compl{(E^>)}\right)} \subseteq \compl{E} \) and find \(\push{r}{\xi}(F_1^>) = 0\).

Setting \(F_2 := \left\{(f,s) \in S : \xi_{(f,s)}([0,s]) = 1 \right\}\) and arguing on the disintegration \(\left(\xi_\omega\right)_{\omega \in \Paths}\) we see that \( \push{r}{\xi}(F_2^>) = 0 \), so \(\push{r}{\xi}(F^>) = 0\) for \(F := F_1 \cup F_2\).

This shows that \(S \setminus (N \cup F^>)\) has full \(\push{r}{\xi}\)-measure. Let \(\Gamma\) be a Borel subset of that set which also has full \(\push{r}{\xi}\)-measure.

Then
\begin{align*}
\init{\Gamma} \times \Gamma & \subseteq \init{\left(\compl{(F^>)}\right)} \times \compl{N} \subseteq \compl{F} \times \compl{N} \text{ and}\\
\SGw & \subseteq (F \times S) \cup (S \times N)
\end{align*}
which shows \( \SGw \cap \left(\init{\Gamma} \times \Gamma\right) = \emptyset \).
\end{proof}

\begin{lemma}
\label{lem:helper}
If \( \alpha \in \RSTy{\initial} \) and \( G : \Spacetime \rightarrow [0,1] \) is \Sgood', then the measure defined by
\begin{align}
\label{eq:helper}
F \mapsto \tsint F(\omega, t) G(\omega, t) \d{\alpha}(\omega,t)
\end{align}
is still in \( \RSTy{\initial} \).
\end{lemma}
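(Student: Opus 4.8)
The plan is to use the description of randomized stopping times by disintegrations in \autoref{lem:equiv RST}. Fix a disintegration \((\alpha_\omega)_{\omega\in\Paths}\) of \(\alpha\) with respect to \(\Blaw{0}{\initial}\) for which \((\omega,t)\mapsto\alpha_\omega([0,t])\) is measurable, \(\timeindexed{\Fnull}\)-adapted and \([0,1]\)-valued, and let \(\beta\) be the measure defined in \eqref{eq:helper}. The natural candidate for a disintegration of \(\beta\) is the family \((\beta_\omega)_{\omega\in\Paths}\) with \(\beta_\omega(I):=\int_I G(\omega,s)\,\alpha_\omega(ds)\). Since \(0\le G\le 1\), each \(\beta_\omega\) is a subprobability measure on \(\Time\) dominated by \(\alpha_\omega\), and \(\beta\) is a subprobability measure on \(\Spacetime\); and applying the defining property of \((\alpha_\omega)_\omega\) to the bounded integrand \((\omega,t)\mapsto F(\omega,t)G(\omega,t)\) shows \(\tsint F\d{\beta}=\tsint\left(\int F(\omega,t)\,\beta_\omega(dt)\right)\d{\Blaw{0}{\initial}}(\omega)\) for every bounded measurable \(F\). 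Hence \((\beta_\omega)_\omega\) is a disintegration of \(\beta\) with respect to \(\Blaw{0}{\initial}\), so in particular \(\push{(\proj{\Paths})}{\beta}\le\Blaw{0}{\initial}\), and by \autoref{lem:equiv RST} it only remains to show that \((\omega,t)\mapsto\beta_\omega([0,t])\) is measurable, \(\timeindexed{\Fnull}\)-adapted and \([0,1]\)-valued.

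The last property is immediate, as \(\beta_\omega([0,t])=\int_{[0,t]}G(\omega,s)\,\alpha_\omega(ds)\le\alpha_\omega([0,t])\le1\). Joint measurability of \((\omega,t)\mapsto\beta_\omega([0,t])\) is a functional monotone class argument: the bounded measurable functions \(H\) on \(\Spacetime\) for which \((\omega,t)\mapsto\int_{[0,t]}H(\omega,s)\,\alpha_\omega(ds)\) is measurable form a vector space closed under bounded monotone limits, and they contain every \(H(\omega,s)=h(\omega)\indicator{[0,u]}(s)\) with \(h\) bounded measurable (for such \(H\) the integral equals \(h(\omega)\,\alpha_\omega([0,t\wedge u])\), which is measurable by the choice of disintegration); since these functions form a multiplicative class generating the product \(\sigma\)-algebra on \(\Spacetime\), every bounded measurable function — in particular \(G\) — belongs to the vector space.

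For \(\timeindexed{\Fnull}\)-adaptedness I would invoke the factorisation of adapted objects through \(S\) from \cite[Theorem \ref{BCH-S2F}]{BCH}. Write \(G=G'\circ r\) with \(G'\colon S\to[0,1]\) measurable, and recall that the measurability properties of \((\alpha_\omega)_\omega\) imply that the restriction of \(\alpha_\omega\) to \([0,t]\) is determined by \(\restr{\omega}{[0,t]}\) alone; together with \(G(\omega,s)=G'(\restr{\omega}{[0,s]},s)\) for \(s\le t\) this shows that \(\beta_\omega([0,t])\) is determined by \(r(\omega,t)\) alone. Hence \((\omega,t)\mapsto\beta_\omega([0,t])\) factors as \(B'\circ r\), where \(B'\) may be taken to be the composition of the (just established to be measurable) map \((\omega,t)\mapsto\beta_\omega([0,t])\) with the measurable right inverse \(r'\) of \(r\) from \autoref{def:S}; thus \(B'\) is measurable on \(S\), and \cite[Theorem \ref{BCH-S2F}]{BCH} yields that \((\omega,t)\mapsto\beta_\omega([0,t])\) is measurable and \(\timeindexed{\Fnull}\)-adapted. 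Then \autoref{lem:equiv RST} gives \(\beta\in\RSTy{\initial}\). The only real work in the lemma is this measurability-and-adaptedness bookkeeping, which the factorisation through \(S\) is designed to make painless; no genuine difficulty arises.
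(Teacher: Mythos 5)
Your proof is correct and follows exactly the paper's route: the paper's own proof also fixes a disintegration $(\alpha_\omega)_\omega$ as in \autoref{lem:equiv RST}, multiplies each fibre by $G(\omega,\cdot)$, and observes that the resulting family is a disintegration of the new measure with $(\omega,t)\mapsto\hat\alpha_\omega([0,t])$ measurable, adapted and $[0,1]$-valued. The paper leaves the measurability and adaptedness bookkeeping implicit, whereas you spell it out via a monotone class argument and the factorisation through $S$; both steps are sound.
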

\begin{proof}
We use the criterion in \autoref{lem:equiv RST}.
Let \( (\alpha_\omega)_{\omega \in \Paths} \) be a disintegration of \( \alpha \) wrt \( \Blaw{0}{\initial} \) for which \( (\omega,t) \mapsto \alpha_\omega([0,t]) \) is \Sgood' and maps into \([0,1]\). Then \((\hat\alpha_\omega)_\omega\) defined by \( \hat\alpha_{\omega} := F \mapsto \tsint F(t) G(\omega,t) \d{\alpha_{\omega}}(t) \) is a disintegration of the measure in \eqref{eq:helper} for which \((\omega,t) \mapsto \hat\alpha_\omega([0,t]) \) is \Sgood' and maps into \([0,1]\).
\end{proof}

\begin{lemma}[Strong Markov property for RSTs]
\label{lem:markov}
Let \( \alpha \in \RSTy{\initial} \). Then
\begin{align*}
\tsint F(\omega,t) \d{\alpha}(\omega,t) = \tsiint F((\omega,t) \conc \tilde\omega, t) \d{\Blaw{t}{0}}(\tilde\omega) \d{\alpha}(\omega,t)
\end{align*}
for all bounded measurable \( F : \Spacetime \rightarrow \R \).
\end{lemma}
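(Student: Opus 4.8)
The plan is to deduce the identity from the ordinary strong Markov property of \(B\), after replacing the abstract randomized stopping time \(\alpha\) by a genuine stopping time by means of \autoref{lem:equivOpt}. Since \(\alpha\) is a subprobability measure and \(F\) is bounded, both sides are finite and linear in \(F\), and the identity is trivial for constant \(F\); so we may assume \(F\ge 0\).

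First I would use \autoref{lem:equivOpt} to obtain a filtered probability space carrying a Brownian motion \(\timeindexed{B}\) with \(B_0\sim\initial\) and a stopping time \(\tau\) such that \(\alpha=\mrestr{\push{\Phi}{\P}}{\Spacetime}\) with \(\Phi=(\timeindexed{B},\tau)\), and \(\tsint G\d\alpha=\E[G(B,\tau)\indicator{\tau<\infty}]\) for every non-negative measurable \(G\). Applying \eqref{RST2ST} to \(G=F\) rewrites the left-hand side as \(\E[F(B,\tau)\indicator{\tau<\infty}]\). Now put
\[
\overbar{F}(\omega,t):=\tsint F((\omega,t)\conc\theta,t)\d{\Blaw{t}{0}}(\theta)\fullstop
\]
Because \((\omega,t)\conc\theta\) depends on \(\omega\) only through \(\restr{\omega}{[0,t]}\), the map \(\overbar{F}\) is bounded, measurable and \(\timeindexed{\Fnull}\)-adapted, so \eqref{RST2ST} also applies to \(G=\overbar{F}\) and rewrites the right-hand side as \(\E[\overbar{F}(B,\tau)\indicator{\tau<\infty}]\). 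It thus remains to show \(\E[F(B,\tau)\indicator{\tau<\infty}]=\E[\overbar{F}(B,\tau)\indicator{\tau<\infty}]\).

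The heart of the matter is the identity \(\E[F(B,\tau)\mid\G_\tau]=\overbar{F}(B,\tau)\) a.s.\ on \(\{\tau<\infty\}\); this is where the strong Markov property of \(B\) enters. On \(\{\tau<\infty\}\) the stopped path \((B_{r\wedge\tau})_{r\ge0}\) and \(\tau\) are \(\G_\tau\)-measurable, while by the strong Markov property the increment path \(W:=(B_{\tau+s}-B_\tau)_{s\ge0}\) is a Brownian motion from \(0\), with law \(\Blaw{0}{0}\), independent of \(\G_\tau\). Since \(B\) is the concatenation at time \(\tau\) of the stopped path with the time-shift of \(W\) by \(\tau\), we have \(F(B,\tau)=\Psi((B_{r\wedge\tau})_{r\ge0},\tau,W)\) for the measurable map \(\Psi(a,t,w):=F((a,t)\conc(\text{shift}_t w),t)\). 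Conditioning on \(\G_\tau\) then freezes the first two arguments of \(\Psi\) and integrates the independent \(W\) against \(\Blaw{0}{0}\); identifying the push-forward of \(\Blaw{0}{0}\) under time-shift by \(t\) with \(\Blaw{t}{0}\) gives \(\E[F(B,\tau)\mid\G_\tau]\indicator{\tau<\infty}=\overbar{F}(B,\tau)\indicator{\tau<\infty}\) (using once more that \(\overbar{F}\) is adapted, to pass from the stopped path to \(B\)). Taking expectations and using the tower property finishes the proof.

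The only delicate point is this last step. One must be sure that the strong Markov property is applied with respect to a filtration for which \(\tau\) is a stopping time and the post-\(\tau\) increments are truly independent of the past — which holds for the filtration supplied by \autoref{lem:equivOpt}, after the (harmless) augmentation by the randomization variable of \autoref{ass:randomization} if necessary — and that the reparametrisation between paths on \([0,\infty)\) and paths on \([\tau,\infty)\) encoded in \autoref{def:concat} is matched with the definition of \(\Blaw{\tau}{0}\) as the law of Brownian motion started at time \(\tau\). A purely measure-theoretic route avoiding \autoref{lem:equivOpt} also works: a functional monotone class argument reduces the claim to \(F(\omega,t)=H(t)G(\omega)\) with \(H\in C_b(\Time)\) and \(G\in C_b(\Paths)\), for which \((\omega,t)\mapsto\tsint G((\omega,t)\conc\theta)\d{\Blaw{t}{0}}(\theta)\) is bounded and continuous, and one bridges from the defining relation \eqref{eq:RSTadapted} of \(\RSTy{\initial}\) to the time-dependent conditioning by a Riemann-sum approximation in \(t\) together with the tower property of conditional expectations; this is workable but noticeably more technical.
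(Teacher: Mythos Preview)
Your proof is correct and follows essentially the same route as the paper: reduce the randomized stopping time to a genuine stopping time via \autoref{lem:equivOpt}, then invoke the classical strong Markov property of Brownian motion. The only presentational difference is that the paper applies \autoref{lem:equivOpt} on the concrete space \(\Omega=[0,1]\times\Paths\) with \(\G_t=\Borel{[0,1]}\atimes\Fnull_t\), so that for each fixed \(y\in[0,1]\) the slice \(\omega\mapsto\tau(y,\omega)\) is an \(\timeindexed{\Fnull}\)-stopping time and the strong Markov property is applied in the natural filtration before integrating over \(y\); this sidesteps exactly the ``delicate point'' you flag about whether the post-\(\tau\) increments are independent of \(\G_\tau\) in a possibly larger filtration.
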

\begin{proof}
Using integral notation instead of the more conventional \(\E\), we may write the classical form of the strong markov property as
\begin{multline*}
\tsint G\left(\Theta_{\tau(\omega)}(\omega)\right) H(\omega) \cdot \indicator{\Time}(\tau(\omega)) \d\Blaw 0 \initial(\omega) = \\
\tsiint G(\tilde\omega) H(\omega) \cdot \indicator{\Time}(\tau(\omega)) \d \Blaw{\tau(\omega)}{\omega(\tau(\omega))}(\tilde\omega) \d \Blaw 0 \initial (\omega) 
\end{multline*}
for all bounded measurable \(G : \Paths \rightarrow \R\) and all bounded \(\Fnull_\tau\)-measurable \(H : \Paths \rightarrow \R\). Here \(\Theta_t\) is the function which cuts off the initial segment of a path up to time \(t\).
From this a simple monotone class argument shows that 
\begin{multline*}
\tsint K\left(\Theta_{\tau(\omega)}(\omega),\omega\right) \cdot \indicator{\Time}(\tau(\omega)) \d\Blaw 0 \initial(\omega) = \\
\tsiint K(\tilde\omega,\omega) \cdot \indicator{\Time}(\tau(\omega)) \d \Blaw{\tau(\omega)}{\omega(\tau(\omega))}(\tilde\omega) \d \Blaw 0 \initial (\omega) 
\end{multline*}
for all bounded \(\Fnull_\infty \atimes \Fnull_\tau\)-measurable \(K : \Paths \times \Paths\).

We may then choose for \(K(\tilde\omega, \omega)\) the function \(F(\eta, \tau(\omega))\) where the path \(\eta\) is created by cutting off the tail of \(\omega\) after time \(\tau(\omega)\) and attaching \(\tilde\omega\) in its place. Noting the relationship between \(\Blaw {\tau(\omega)} x\) and \(\Blaw {\tau(\omega)} 0\) we then get
\begin{multline*}
\tsint F(\omega,\tau(\omega)) \cdot \indicator{\Time}(\tau(\omega)) \d\Blaw 0 \initial(\omega) = \\
\tsiint F((\omega,\tau(\omega)) \conc \tilde\omega,\tau(\omega)) \cdot \indicator{\Time}(\tau(\omega)) \d \Blaw{\tau(\omega)}{0}(\tilde\omega) \d \Blaw 0 \initial (\omega) \fullstop
\end{multline*}

Using \autoref{lem:equivOpt} with \(\Omega = [0,1] \times \Paths\) and \(\G_t = \Borel{[0,1]} \atimes \F_t\) we find a \(\timeindexed\G\)-stopping time \(\tau\) s.t.\ we may write \(\alpha\) as \[ \alpha = \mrestr{\push{\big((y,\omega) \mapsto (\omega,\tau(y,\omega))\big)}{\Lebesgue \mtimes \Blaw 0 \initial}}{\Spacetime} \] (where \(\Lebesgue\) is Lebesgue measure on \([0,1]\)).
For a fixed \(y \in [0,1]\), \(\omega \mapsto \tau(y,\omega)\) is an \(\timeindexed\Fnull\)-stopping time, so we may apply the previous equation to these stopping times and integrate over \(y \in [0,1]\) to get
\begin{multline*}
\tsint F(\omega,\tau(y,\omega)) \cdot \indicator{\Time}(\tau(y,\omega)) \d(\Lebesgue \mtimes \Blaw 0 \initial)(y,\omega) = \\
\tsiint F((\omega,\tau(y,\omega)) \conc \tilde\omega,\tau(y,\omega)) \cdot \indicator{\Time}(\tau(y,\omega)) \d \Blaw{\tau(y,\omega)}{0}(\tilde\omega) \d (\Lebesgue \mtimes \Blaw 0 \initial) (y,\omega) \fullstop
\end{multline*}
Using the equation for \(\alpha\) we see that this is what we wanted to prove.
\end{proof}

\begin{lemma}[Gardener's Lemma]
\label{lem:gardener}
Assume that we have \(\xi \in \RSTyf{\initial}\), a measure \(\alpha\) on \(\Spacetime\) and two families \( \beta^{(\omega,t)} \), \( \gamma^{(\omega,t)} \), where \( (\omega,t) \in \Spacetime \), with \( \beta^{(\omega,t)}, \gamma^{(\omega,t)} \in \RSTtf{t} \) such that both maps
\begin{align*}
(\omega,t) & \mapsto \tsint \indicator{D}\left((\omega,t) \conc \tilde\omega,s\right) \d{\beta^{(\omega,t)}}(\tilde\omega,s) \enskip \text{ and } \\
(\omega,t) & \mapsto \tsint \indicator{D}\left((\omega,t) \conc \tilde\omega,s\right) \d{\gamma^{(\omega,t)}}(\tilde\omega,s)
\end{align*}
are measurable for all Borel \(D \subseteq \Spacetime\) and that 
\begin{align}
\label{eq:nonegativebushes}
\xi(D) - \tsiint \indicator{D}\left((\omega,t) \conc \tilde\omega,s\right) \d{\beta^{(\omega,t)}}(\tilde\omega,s) \d{\alpha}(\omega,t) \geq 0
\end{align}
for all Borel \(D \subseteq \Spacetime\).
Then for \(\hat\xi\) defined by
\begin{align*}
\tsint F \d{\hat\xi} := \tsint F \d{\xi} & - \tsiint F((\omega,t) \conc \tilde\omega,s) \d{\beta^{(\omega,t)}}(\tilde\omega,s) \d{\alpha}(\omega,t) \\
& + \tsiint F((\omega,t) \conc \tilde\omega,s) \d{\gamma^{(\omega,t)}}(\tilde\omega,s) \d{\alpha}(\omega,t)
\end{align*}
for all bounded measurable \(F\) we have \(\hat\xi \in \RSTyf{\initial}\).
\end{lemma}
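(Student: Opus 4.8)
The plan is to verify directly the three requirements for $\hat\xi$ to lie in $\RSTyf{\initial}$: that $\hat\xi$ is a non-negative finite measure; that it has total mass $1$ and path-marginal $\Blaw{0}{\initial}$; and that it satisfies the adaptedness relation \eqref{eq:RSTadapted}. Throughout I will use that each bush $\beta^{(\omega,t)},\gamma^{(\omega,t)}\in\RSTtf{t}$ has mass $1$, hence --- being dominated on the path coordinate by the probability measure $\Blaw{t}{0}$ --- has path-marginal exactly $\Blaw{t}{0}$. Write $\mu_\beta,\mu_\gamma$ for the two grafting measures occurring in the definition of $\hat\xi$, so that $\hat\xi=\xi-\mu_\beta+\mu_\gamma$ in the sense that $\tsint F\d{\hat\xi}$ is given by the stated formula for all bounded measurable $F$; the measurability hypotheses guarantee, by monotone convergence over disjoint unions, that $\mu_\beta$ and $\mu_\gamma$ are well-defined Borel measures.

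First I would show $\hat\xi$ is a probability measure. Taking $D=\Spacetime$ in \eqref{eq:nonegativebushes} and using mass $1$ of the bushes gives $\alpha(\Spacetime)\le 1$, so $\alpha$, $\mu_\beta$, $\mu_\gamma$ are finite; and \eqref{eq:nonegativebushes} says precisely $\mu_\beta\le\xi$, whence $\xi-\mu_\beta\ge0$ and $\hat\xi=(\xi-\mu_\beta)+\mu_\gamma\ge0$. For the total mass, $\mu_\beta(\Spacetime)=\mu_\gamma(\Spacetime)=\alpha(\Spacetime)$ since the bushes have mass $1$, so these cancel and $\hat\xi(\Spacetime)=\xi(\Spacetime)=1$. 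For the path-marginal, since every $\beta^{(\omega,t)}$ and $\gamma^{(\omega,t)}$ has path-marginal $\Blaw{t}{0}$, both $\push{(\proj{\Paths})}{\mu_\beta}$ and $\push{(\proj{\Paths})}{\mu_\gamma}$ equal the common measure $A\mapsto\tsiint\indicator{A}((\omega,t)\conc\tilde\omega)\d{\Blaw{t}{0}}(\tilde\omega)\d{\alpha}(\omega,t)$; they cancel, so $\push{(\proj{\Paths})}{\hat\xi}=\push{(\proj{\Paths})}{\xi}=\Blaw{0}{\initial}$.

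The main step is the adaptedness relation \eqref{eq:RSTadapted}. Fix $s>0$, $G\in C_b(\Paths)$, $F\in C_b(\Time)$ supported on $[0,s]$, and integrate $F(r)(G(\omega)-\cExp{s}{G}(\omega))$ against $\hat\xi$. The $\xi$-contribution vanishes because $\xi\in\RSTy{\initial}$, so it is enough to show that for every $(\omega,t)\in\Spacetime$ the inner integral
\[
I(\omega,t):=\tsint F(u)\,\bigl(G-\cExp{s}{G}\bigr)\bigl((\omega,t)\conc\tilde\omega\bigr)\,\d{\rho}(\tilde\omega,u)
\]
vanishes for $\rho=\beta^{(\omega,t)}$, and likewise for $\rho=\gamma^{(\omega,t)}$; the $\mu_\beta$- and $\mu_\gamma$-terms then integrate to $0$ against $\alpha$. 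I would distinguish three cases according to the position of the grafting time $t$ relative to $s$. If $t>s$, then $u\ge t>s$ on the support of $\rho$, so $F(u)=0$ and $I(\omega,t)=0$. If $t<s$, then $\tilde\omega\mapsto(\omega,t)\conc\tilde\omega$ is continuous and sends $G$ to some $G'\in C_b(\Paths[t])$; using \autoref{lem:cExp} and the associativity of concatenation one checks $\cExp{s}{G}((\omega,t)\conc\tilde\omega)=\cExp[t]{s}{G'}(\tilde\omega)$, so $I(\omega,t)=\tsint F(u)\bigl(G'-\cExp[t]{s}{G'}\bigr)(\tilde\omega)\d{\rho}(\tilde\omega,u)$, which is $0$ by relation \eqref{eq:RSTadapted} for $\rho$ (a randomized stopping time of Brownian motion started at time $t$), since $F$ restricted to $\Time[t]$ is a bounded continuous function supported on $[t,s]$ with $s>t$. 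The delicate case is $t=s$: here $F(u)$ reduces on the support of $\rho$ to $F(s)\indicator{\{u=s\}}$, and $\cExp{s}{G}((\omega,s)\conc\tilde\omega)$ equals the $\tilde\omega$-independent constant $c:=\tsint G((\omega,s)\conc\theta)\d{\Blaw{s}{0}}(\theta)$; since the $\sigma$-algebra $\F^{s}_{s}$ of Brownian motion started at time $s$ in a point is trivial, the immediate-stop mass $\rho_{\tilde\omega}(\{s\})$ is $\Blaw{s}{0}$-a.s.\ a constant $m$, so $\rho$ restricted to $\{u=s\}$ has path-marginal $m\,\Blaw{s}{0}$ and $I(\omega,s)=F(s)\,m\,(c-c)=0$. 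Assembling the cases, $I\equiv0$ for both families, so \eqref{eq:RSTadapted} holds for $\hat\xi$.

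Combining the last two paragraphs, $\hat\xi$ is an element of $\RSTy{\initial}$ of mass $1$, i.e.\ $\hat\xi\in\RSTyf{\initial}$. I expect the boundary case $t=s$ of the adaptedness verification to be the main obstacle: the orthogonality relation \eqref{eq:RSTadapted} available for the bushes covers only parameters strictly beyond the starting time, so the immediate-stop contribution must be dispatched separately by the Blumenthal-type (trivial-filtration) argument above. An alternative would sidestep the case analysis by invoking \autoref{lem:equiv RST}: one disintegrates $\hat\xi$ as $\hat\xi_\eta=\xi_\eta-(\mu_\beta)_\eta+(\mu_\gamma)_\eta$ with respect to $\Blaw{0}{\initial}$ (the domination $\mu_\beta\le\xi$ keeping the differences non-negative) and argues that $\eta\mapsto(\mu_\beta)_\eta([0,t])$ and $\eta\mapsto(\mu_\gamma)_\eta([0,t])$ are $\F^0_t$-measurable because, each bush being itself adapted, these quantities depend only on the grafted path up to time $t$; this, however, trades the boundary case for a fussier disintegration argument, so I would follow the direct route above.
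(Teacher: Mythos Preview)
Your proof is correct and follows essentially the same route as the paper: verify that $\hat\xi$ is a non-negative probability measure, check the path-marginal by cancelling the $\beta$- and $\gamma$-contributions, and verify the adaptedness relation \eqref{eq:RSTadapted} by reducing the inner integral to an instance of \eqref{eq:RSTadapted} for the bush $\beta^{(\omega,t)}$ (resp.\ $\gamma^{(\omega,t)}$).

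The one place you diverge is the boundary case $t=s$, which you flag as ``the main obstacle'' and handle via a Blumenthal-type argument on the immediate-stop mass. This works, but it is unnecessary: since $F\in C_b(\Time)$ is supported on $[0,s]$, continuity forces $F(s)=0$. Hence for $t\ge s$ one has $F(u)=0$ for all $u\ge t$, and $I(\omega,t)=0$ trivially. The paper absorbs $t=s$ into the ``$t$ large'' case in this way (writing the conclusion ``for all $(\omega,t)$ and $r\ge t$'' without further comment), so only the case $t<s$ requires invoking the adaptedness of the bush. Your more elaborate treatment is sound but can be replaced by this one-line observation.
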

\begin{remark}
The intuition behind the \nameref{lem:gardener} is that we are replacing certain branches \( \beta^{(\omega,t)} \) of the randomized stopping time \( \xi \) by other branches \( \gamma^{(\omega,t)} \) to obtain a new stopping time \( \hat\xi \). This process happens \emph{along} the measure \(\alpha\).
Note that \eqref{eq:nonegativebushes} implies that \(\tsint \indicator{D}\left((\omega,t) \conc \tilde\omega\right) \d{\Blaw{t}{0}}(\tilde\omega) \d{\alpha}(\omega,t) \leq \Blaw{0}{\initial}(D) \) for all Borel \(D \subseteq \Paths\).
The authors like to think of \(\alpha\) as a stopping time and of the maps \((\omega,t) \mapsto \beta^{(\omega,t)}\) and \((\omega,t) \mapsto \gamma^{(\omega,t)}\) as adapted (in some sense that would need to be made precise). As these assumptions aren't necessary for the proof of the \nameref{lem:gardener}, they were left out, but it might help the reader's intuition to keep them in mind.
\end{remark}
\begin{proof}[Proof of \autoref{lem:gardener}]
We need to check that the \(\hat\xi\) we define is indeed a measure, that \(\push{(\proj{\Paths})}{\hat\xi} = \Blaw{0}{\initial}\) and that \eqref{eq:RSTadapted} holds for \(\hat\xi\).

Checking that \(\hat\xi\) is a measure is routine -- we just note that \eqref{eq:nonegativebushes} guarantees that \(\hat\xi(D) \geq 0 \) for all Borel D.

Let \(G: \Paths \rightarrow \R \) be a bounded measurable function.
\begin{alignat*}{2}
\tsint G(\omega) \d{\hat\xi}(\omega,t) & = \tsint G(\omega) \d{\xi}(\omega,t) & & - \tsiint G((\omega,t) \conc \tilde\omega) \d{\beta^{(\omega,t)}}(\tilde\omega,s) \d{\alpha}(\omega,t) \\
& & & + \tsiint G((\omega,t) \conc \tilde\omega) \d{\gamma^{(\omega,t)}}(\tilde\omega,s) \d{\alpha}(\omega,t) \\
& = \tsint G \d{\Blaw{0}{\initial}} & & - \tsiint G((\omega,t) \conc \tilde\omega) \d{\Blaw{t}{0}} \d{\alpha}(\omega,t) \\
& & & + \tsiint G((\omega,t) \conc \tilde\omega) \d{\Blaw{t}{0}} \d{\alpha}(\omega,t) \\
& = \tsint G \d{\Blaw{0}{\initial}} &&
\end{alignat*}
Now let \(F : \Time \rightarrow \R\) and \(G: \Paths \rightarrow \R\) be bounded continuous functions, with \(F\) supported on \([0,r]\).
\begin{multline}
\label{eq:hatxiadapted}
\tsint F(t) \left(G - \cExp{r}{G}\right)(\omega) \d{\hat\xi}(\omega,t) = \tsint F(t) \left(G - \cExp{r}{G}\right)(\omega) \d{\xi}(\omega,t) \\
\shoveright{
- \tsiint F(s) \left(G - \cExp{r}{G}\right)((\omega,t) \conc \tilde\omega) \d{ \beta^{(\omega,t)}}(\tilde\omega,s) \d{\alpha}(\omega,t)} \\
- \tsiint F(s) \left(G - \cExp{r}{G}\right)((\omega,t) \conc \tilde\omega) \d{\gamma^{(\omega,t)}}(\tilde\omega,s) \d{\alpha}(\omega,t) 
\end{multline}
The first summand is \(0\) because \(\xi \in \RSTyf{\initial}\).
Looking at the second summand we expand the definition of \(\cExp{r}{G}\).
\begin{align*}
\cExp{r}{G}((\omega,t) \conc \tilde\omega)
& = \tsint G(((\omega,t) \conc \tilde\omega,r) \conc \theta) \d{\Blaw{r}{0}}(\theta) \\
& = \tsint G((\omega,t) \conc ((\tilde\omega,r) \conc \theta)) \d{\Blaw{r}{0}}(\theta)
\end{align*}
whenever \(t \leq r\), which is the case for those \(t\) which are relevant in the integrand above, because \( F(s) \neq 0 \) implies \( s \leq r \) and moreover \(\beta^{(\omega,t)}\) is concentrated on \((\tilde\omega,s)\) for which \( t \leq s \).

Setting \(\hat G^{(\omega,t)}(\tilde\omega) := G((\omega,t) \conc \tilde\omega)\) and \(\hat F^{(\omega,t)} := \restr{F}{\Time[t]}\) we can write
\begin{multline*}
\tsiint F(s) \left(G - \cExp{r}{G}\right)((\omega,t) \conc \tilde\omega) \d{ \beta^{(\omega,t)}}(\tilde\omega,s) \d{\alpha}(\omega,t) = \\
\tsint \indicator{[0,r]}(t) \tsint \hat F^{(\omega,t)}(s) \left( \smash{\hat G^{(\omega,t)}} - \cExp[t]{r}{\smash{\hat G^{(\omega,t)}}} \right)(\tilde\omega) \d{ \beta^{(\omega,t)}}(\tilde\omega,s) \d{\alpha}(\omega,t)
\end{multline*}
which is \(0\) because \(\beta^{(\omega,t)} \in \RSTtf{t}\) and therefore \[ \tsint \hat F^{(\omega,t)}(s) \left( \smash{\hat G^{(\omega,t)}} - \cExp[t]{r}{\smash{\hat G^{(\omega,t)}}} \right)(\tilde\omega) \d{ \beta^{(\omega,t)}}(\tilde\omega,s) = 0 \] for all \((\omega,t)\) and \(r \geq t\).
The same argument works for the third summand in \eqref{eq:hatxiadapted}.
\end{proof}

\newcommand{\x}[1]{\xi_{#1}^\pi}
\makeatletter
\newcommand{\pushright}[1]{\ifmeasuring@#1\else\omit\hfill$\displaystyle#1$\fi\ignorespaces}
\newcommand{\pushleft}[1]{\ifmeasuring@#1\else\omit$\displaystyle#1$\hfill\fi\ignorespaces}
\makeatother

\begin{proof}[Proof of \autoref{Invisible2}]
We prove the contrapositive. Assuming that there exists a \( \pi' \in \JOIN(\push{r}{\xi}) \) with \( \push{\left(r \ftimes \Id\right)}{\pi'}(\SG^\xi) > 0 \), we construct a \( \xi^\pi \in \RSTyd{\initial}{\target} \) such that \( \tsint \cost \d{\xi^\pi} < \tsint \cost \d{\xi} \).

If \(\pi' \in \JOIN(\push{r}{\xi})\), then for any two measurable sets \(D_1,D_2 \subseteq S\), because \(\mrestr{\pi'}{(\Spacetime) \times D_2} \in \RSTy{\initial}\) and by making use of \autoref{lem:helper} we can deduce that \(\push{(\proj{\Spacetime})}{\mrestr{\pi'}{\invimage{(r \ftimes \Id)}{D_1 \times D_2}}} \in \RSTy{\initial}\). Using the monotone classe theorem this extends to any measurable subset of \(S \times S\) in place of \(D_1 \times D_2\). So we can set \(\pi := \mrestr{\pi'}{\invimage{\left(r \ftimes \Id\right)}{\smash{\SG^\xi}}}\) and know that \(\push{(\proj{\Spacetime})}{\pi} \in \RSTy{\initial}\) and that \(\pi\) is concentrated on \(\SG^\xi\).

We will be using a disintegration of \( \pi \) wrt \(r(\xi)\), which we call \( \left(\pi_{(g,t)}\right)_{(g,t) \in S} \) and for which we assume that \(\pi_{(g,t)}\) is a subprobability measure for all \((g,t) \in S\). It will also be useful to assume that \( \pi_{(g,t)} \) is concentrated on the set \( \{ (\omega,s) \in \Spacetime : s = t \} \) not just for \( r(\xi) \)-almost all \( (g,t) \) but for all \( (g,t) \). Again this is no restriction of generality.
We will also push \( \pi \) onto \( \left(\Spacetime\right) \times \left(\Spacetime\right) \), defining a measure \(\bar\pi\) via
\begin{align*}
\tsint F \d{\bar\pi} := \tsiint F\left((\omega,s),((g,t) \conc \tilde\eta, t)\right) \d{\Blaw{t}{0}}(\tilde\eta) \d{\pi}\left((\omega,s),(g,t)\right)
\end{align*}
for all bounded measurable \(F\). Observe that by \autoref{lem:markov} the pushforward of \(\pi\) under projection onto the second coordinate (pair) is \(\xi\) and that a disintegration of \(\bar\pi\) wrt to \(\xi\) (again in the second coordinate) is given by \(\left(\pi_{r(\eta,t)}\right)_{(\eta,t) \in \Spacetime}\).
Let us name \(\push{(\proj{\Spacetime})}{\pi} =: \zeta \in \RSTy{\initial} \).
We will now use the \nameref{lem:gardener} to define two modifications \(\x0\), \(\x1\) of \(\xi\) such that \(\xi^\pi := \frac{1}{2}(\x0 + \x1)\) is our improved randomized stopping time.

\newcommand{\Surv}[2]{(1-\xi_{#1}([0,#2]))}

For all bounded measurable \(F : \Spacetime \rightarrow \R\) define
\begin{align*}
\tsint F \d{\x0} & := \tsint F \d{\xi} + \tsint \Surv{\omega}{s} \Big( - \tsint F((\omega,s) \conc \tilde\omega, u) \d{\xi^{r(\omega,s)}}(\tilde\omega,u) \\ & \pushright{ + F(\omega,s) \Big) \d{\zeta}(\omega,s)} \\
\tsint F \d{\x1} & := \tsint F \d{\xi} + \tsint \Surv{\omega}{s} \Big( -        F(\eta,t)                                                                 \\ & \pushright{ + \tsint F((\eta,t) \conc \tilde\omega,u) \d{\xi^{r(\omega,s)}}(\tilde\omega, u) \Big) \d{\bar\pi}((\omega,s),(\eta,t))} \fullstop
\end{align*}
The concatenation on the last line is well-defined \(\bar\pi\)-almost everywhere because \(\bar\pi\) is concentrated on \( \invimage{(r \ftimes r)}{\SG^\xi} \) and so in the integrand above \(s = t\) on a set of full measure.

We need to check that the \nameref{lem:gardener} applies in both cases. First of all observe that the product measure \( \Blaw{t}{0} \mtimes \delta_t \) is in \( \RSTtf{t} \) and that \autoref{lem:markov} implies
\begin{align*}
\tsint F(\omega,t) \d{\alpha}(\omega,t) = \tsiint F((\omega,t) \conc \tilde\omega, s) \d{\left(\Blaw{t}{0} \mtimes \delta_t\right)}(\tilde\omega,s) \d{\alpha}(\omega,t) \fullstop
\end{align*}
for any randomized stopping time \(\alpha\). So for \(\x0\) the measures \(\gamma^{(\omega,t)}\) are given by \( \Blaw{t}{0} \mtimes \delta_t \) and for \(\x1\) the measures \(\beta^{(\omega,t)}\) are given by \( \Blaw{t}{0} \mtimes \delta_t \).

For \( \x0 \) the measure along which we are replacing branches is given by
\begin{align*}
F \mapsto \tsint F(\omega,s) \Surv{\omega}{s} \d{\zeta}(\omega,s) \fullstop
\end{align*}
The branches \( \beta^{(\omega,s)} \) we remove are \(\xi^{r(\omega,s)} \).
We need to check that
\begin{align*}
\tsint F \d{\xi} - \tsint \Surv{\omega}{s} \tsint F((\omega,s) \conc \tilde\omega, u) \d{\xi^{r(\omega,s)}}(\tilde\omega,u) \d{\zeta}(\omega,s) \geq 0
\end{align*}
for all positive, bounded, measurable \( F : \Spacetime \rightarrow \R \).
Let us calculate.
\begin{multline*}
\tsint \Surv{\omega}{s} \tsint F((\omega,s) \conc \tilde\omega, u) \d{\xi^{r(\omega,s)}}(\tilde\omega,u) \d{\zeta}(\omega,s) = \\
\tsiiint F((\omega,s) \conc \tilde\omega, u) \d{\left(\mrestr{(\xi_{(\omega,s) \conc \tilde\omega})}{(s,\infty)}\right)}(u) \d{\Blaw{s}{0}}(\tilde\omega) \d{\zeta}(\omega,s) = \\
\tsiint F(\omega, u) \d{\left(\mrestr{(\xi_{\omega})}{(s,\infty)}\right)}(u) \d{\zeta}(\omega,s) \leq 
\tsiint F(\omega, u) \d{(\xi_{\omega})}(u) \d{\zeta}(\omega,s) \leq \\ 
\tsiint F(\omega, u) \d{(\xi_{\omega})}(u) \d{\Blaw{0}{\initial}}(\omega) = 
\tsint F(\omega,u) \d{\xi}(\omega,u)
\end{multline*}
Here we first used the definition of \( \xi^{r(\omega,s)} \) and then \autoref{lem:markov} and finally that \(\push{(\proj{\Paths})}{\zeta} \leq \Blaw{0}{\initial}\).

For \(\x1\) we replace branches along
\begin{align*}
F & \mapsto \tsint F(\eta,t) \Surv{\omega}{s} \d{\bar\pi}\left((\omega,s),(\eta,t)\right) \\
  & = \tsint F(\eta,t) \tsint \Surv{\omega}{s} \d{\pi_{r(\eta,t)}}(\omega,s) \d{\xi}(\eta,t)
\fullstop
\end{align*}
The calculation above shows that
\begin{align*}
\tsint F \d{\xi} - \tsint \Surv{\omega}{s} F(\eta,t) \d{\bar\pi}\left((\omega,s),(\eta,t)\right) \geq 0
\end{align*}
for all positive, bounded, measurable \( F : \Spacetime \rightarrow \R \).
For \(\x1\) the branches \(\gamma^{(\eta,t)}\) that we add are given by
\begin{align*}
F \mapsto \frac{ \tsint \Surv{\omega}{s} \tsint F(\tilde\omega,u) \d{\xi^{r(\omega,s)}}(\tilde\omega,u) \d{\pi_{r(\eta,t)}}(\omega,s)
 }{ \tsint \Surv{\omega}{s} \d{\pi_{r(\eta,t)}}(\omega,s) }
\end{align*}
when \(\tsint \Surv{\omega}{s} \d{\pi_{r(\eta,t)}}(\omega,s) > 0\) and \( \delta_t \) otherwise (again, the latter is arbitrary).
In the more interesting case \( \gamma^{(\eta,t)} \) is an average over elements of \(\RSTtf{t}\) and therefore itself in \(\RSTtf{t}\). Here it is again crucial that for \( \pi_{r(\eta,t)} \)-almost all \((\omega,s)\) we have \(s = t\), otherwise we would be averaging randomized stopping times of our process started at unrelated times.

Putting this together we see that \(\xi^\pi := \frac{1}{2}(\x0 + \x1)\) is a randomized stopping time and that
\begin{multline}
\label{eq:Invisible2}
2 \tsint F \d{(\xi^\pi - \xi)} = 
\tsint \Surv{\omega}{s} \Big( F(\omega,s) - \tsint F((\omega,s) \conc \tilde\omega, u) \d{\xi^{r(\omega,s)}}(\tilde\omega,u) \\ - F(\eta,t) + \tsint F((\eta,t) \conc \tilde\omega,u) \d{\xi^{r(\omega,s)}}(\tilde\omega, u) \Big) \d{\bar\pi}((\omega,s),(\eta,t))
\end{multline}
for all bounded measurable  \( F : \Spacetime \rightarrow \R \).
Specializing to \(F(\omega,s) = G(s)\) for \(G : \Time \rightarrow \R\) bounded measurable we find that
\begin{align*}
\tsint G(s) \d{(\xi-\xi^\pi)}(\omega,s) = 0 \text{ ,}
\end{align*}
again because for \(\bar\pi\)-almost all \(\left((\omega,s),(\eta,t)\right)\) we have \(s=t\). This shows that \(\xi^\pi \in \RSTyd{\initial}{\target}\).

\makeatletter
\newcommand{\specialcell}[1]{\ifmeasuring@#1\else\omit$\displaystyle#1$\ignorespaces\fi}
\makeatother

We now want to extend \eqref{eq:Invisible2} to \(\cost\). We first show that \eqref{eq:Invisible2} also holds for \(F: \Spacetime \rightarrow \R\) which are measurable and positive and for which \(\tsint F \d{\xi} < \infty\).
To see this, approximate such an \(F\) from below by bounded measurable functions (for which \eqref{eq:Invisible2} holds) and note that by previous calculations both 
\begin{align*}
\tsint \! \Surv{\omega}{s} \tsint F((\omega,s) \conc \tilde\omega, u) \d{\xi^{r(\omega,s)}}(\tilde\omega,u) \d{\bar\pi}((\omega,s),(\eta,t)) \leq \tsint F \d{\xi} & < \infty \\
\specialcell{\text{and } \hfill \tsint \Surv{\omega}{s} F(\eta,t) \d{\bar\pi}((\omega,s),(\eta,t)) \leq \tsint F \d{\xi}} & < \infty \fullstop
\end{align*}
Looking at positive and negative parts of \(\cost\) and using Assumption \assref{ass:wellposed} to see that \( \tsint c_{-} \d{(\xi^\pi-\xi)} \in \R \) we get that indeed \eqref{eq:Invisible2} holds for \(F = c\).

Now we will argue that the integrand in the right hand side of \eqref{eq:Invisible2} is negative \(\bar\pi\)-almost everywhere. This will conclude the proof.

By inserting an \(r\) in appropriate places we can read off from \autoref{def:SGxi} what it means that \(\bar\pi\) is concentrated on \(\invimage{(r \ftimes r)}{\SG^\xi}\). In the course of verifying that \eqref{eq:Invisible2} applies to \(\cost\) we already saw that cases \ref{it:intinfty} and \ref{it:intundef} in \autoref{def:SGxi} can only occur on a set of \(\bar\pi\)-measure \(0\). \autoref{lem:xifs} excludes case \ref{it:xilt1} \(\bar\pi\)-almost everywhere. This means that \eqref{eq:SGxi} holds \(\bar\pi\)-almost everywhere -- or more correctly, that for \(\bar\pi\)-a.a. \(((\omega,s),(\eta,t))\) we have \(s=t\) and
\begin{multline}
\label{eq:integrand_negative}
\cost(\omega,s) - \tsint \cost((\omega,s) \conc \tilde\omega, u) \d{\xi^{r(\omega,s)}}(\tilde\omega,u) \\
- \cost(\eta,t) + \tsint \cost((\eta,t) \conc \tilde\omega, u) \d{\xi^{r(\omega,s)}}(\tilde\omega,u) < 0 \comma
\end{multline}
completing the proof.
\end{proof}

\section{Variations on the Theme}

We proceed to prove \autoref{cor:maxbarrier}. This is closely modelled on the treatment of the Azema-Yor embedding in \cite[Theorem \ref{BCH-thm:AY}]{BCH}.
As is the case there we run into a technical obstacle, though one which can be overcome by combining the ideas we have already seen in slightly new ways.

To demonstrate the problem let us begin an attempt to prove \autoref{cor:maxbarrier}. Again, we read off \(\cost(\omega,t) = -\rmax \omega t\), with \(\rmax\omega t = \sup_{s \leq t}\omega(s)\). We may use \autoref{thm:existence} to find a solution \(\tau\) of the problem \probref{OptStopmax} and we use \autoref{thm:monotonicity} to find a set \(\Gamma \subseteq \Spacetime\) for which \(\P[(B,\tau) \in \Gamma] = 1\) and \(\SG \cap (\init{\Gamma}\times\Gamma) = \emptyset\).
Now we would like to apply \autoref{lem:geometry} with \(Y_t(\omega) = \omega(t) - \rmax\omega t\), as proposed by \autoref{cor:maxbarrier}, so we want to prove that \(\omega(t) - \rmax \omega t < \eta(t) - \rmax \eta t\) implies \(((\omega,t),(\eta,t)) \in \SG\).
Let us do the calculations. We start with an \(\timeindexed[s][t]{\F^t}\)-stopping time \(\sigma\), for which \(\Blaw{t}{0}(\sigma = t) < 1\), \(\Blaw{t}{0}(\sigma = \infty) = 0\) and for which both sides in \eqref{eq:SG} are defined and finite. To reduce clutter, let us name \(\push{(\omega \mapsto (\omega,\sigma(\omega)))}{\Blaw{t}{0}} =: \alpha\), so that \eqref{eq:SG}, which we want to prove, reads
\begin{align}
\label{eq:SG1}
- \rmax \omega t + \tsint \rmax{((\omega,t) \conc \theta)}{s} \d{\alpha}(\theta,s) < 
- \rmax \eta   t + \tsint \rmax{((\eta  ,t) \conc \theta)}{s} \d{\alpha}(\theta,s)
\end{align}
We may rewrite the left hand side as
\begin{multline*}
\tsint \Big(\rmax \omega t \vee \big(\omega(t) + \rmax \theta s\big)\Big) - \rmax \omega t \d{\alpha}(\theta,s) = \\
\tsint 0 \vee \big(\omega(t) - \rmax \omega t + \rmax \theta s \big) \d{\alpha}(\theta,s) \fullstop
\end{multline*}
For the right hand side we get the same expression with \(\omega\) replaced by \(\eta\). Looking at the integrands we see that if
\begin{align}
\label{eq:AY_simple_case}
0 < \eta(t) - \rmax \eta t + \rmax \theta s
\end{align}
then
\begin{align*}
0 \vee \big( \omega(t) - \rmax \omega t + \rmax \theta s \big) < 
0 \vee \big( \eta(t) - \rmax \eta t + \rmax \theta s \big) \comma
\end{align*}
but in the other case
\begin{align*}
0 \vee \big( \omega(t) - \rmax \omega t + \rmax \theta s \big) = 0 =
0 \vee \big( \eta(t) - \rmax \eta t + \rmax \theta s \big) \fullstop
\end{align*}
So if \eqref{eq:AY_simple_case} holds for \((\theta,s)\) from a set of positive \(\alpha\)-measure, then we proved what we wanted to prove. But if \( \rmax \theta s \leq \rmax \eta t - \eta(t) \) for \(\alpha\)-a.a.\ \((\theta,s)\) then in \eqref{eq:SG} we have equality instead of strict inequality.

As in \cite[Theorem \ref{BCH-thm:AY}]{BCH}, one way of getting around this is to introduce a secondary optimization criterion.
One way to explain the idea of secondary optimization is to think about what happens if, instead of considering a cost function \(\cost : \Spacetime \rightarrow \R\) we consider a cost function \(\cost : \Spacetime \rightarrow \R^n\).
Of course, to be able to talk about optimization, we will then want to have an order on \(\R^n\). For reasons that should become clear soon, we decide on the lexicographical order.
For the case \(n = 2\) that we are actually interested in for \autoref{cor:maxbarrier} this means that 
\begin{align*}
(x_1,x_2) \leq (y_1,y_2) \iff x_1 < y_1 \text{ or } (x_1 = y_1 \text{ and } x_2 \leq y_2) \fullstop
\end{align*}

We claim that \autoref{thm:monotonicity} is still true if we replace \(\cost : \Spacetime \rightarrow \R\) by \(\cost : \Spacetime \rightarrow \R^n\) and read any symbol \(\leq\) which appears between vectors in \(\R^n\) as the lexicographic order on \(\R^n\) (and of course likewise for all the derived symbols and notions \(<\), \(\geq\), \(>\), \(\inf\), etc.).
Moreover, the arguments are exactly the same.
Indeed the crucial part that may deserve some mention is at the end of the proof of \autoref{Invisible2}, where we use the assumption that \eqref{eq:integrand_negative} holds on a set of positive measure, i.e.\ that the integrand is \( < 0 \) on a set of positive measure, and that the integrand is \(0\) outside that set, to conclude that the integral itself must be \( < 0 \). This implication is also true for the lexicographical order on \(\R^n\).
One more detail to be aware of is that integrating functions which map into \(\R^2\) may give results of the form \((\infty,x)\), \((x,-\infty)\), etc.
In the case of a one-dimensional cost function we excluded such problems by making Assumption \assref{ass:wellposed}.
What we really want in the proof of \autoref{Invisible2} is that \(\tsint \cost \d{\xi}\) and \(\tsint \cost \d{\xi^\pi} \) should be finite. Clearly a sufficient condition to guarantee this is to replace Assumption \assref{ass:wellposed} by
\begin{enumerate}
\item[(\ref{ass:wellposed}')] \( \E[\cost(B,\tau)] \in \R^n \) for all stopping times \( \tau \sim \target \).
\end{enumerate}
This is not the most general version possible but it will suffice for our purposes.

To get an existence result we may assume that \(\cost=(\cost_1,\cost_2)\) is component-wise lower semicontinuous and that both \(\cost_1\) and \(\cost_2\) are bounded below (in either of the ways described in the two versions of \autoref{thm:existence}).
Note that -- because we are talking about the lexicographic order -- \(\xi \in \RSTyd{\initial}{\target}\) is a solution of \probref{OptStop'} for \(\cost\) iff \(\xi\) is a solution of \probref{OptStop'} for \(\cost_1\) and among all such solutions \(\xi'\), \(\xi\) minimizes \(\tsint \cost_2 \d{\xi'}\).
By \autoref{thm:existence} in the form that we have already proved the set of solutions of \probref{OptStop'} for \(\cost_1\) is non-empty. It is also a closed subset of a compact set and therefore itself compact. This allows us to reiterate the argument that we used in the proof of \autoref{thm:existence} to find inside this set a minimizer of \(\xi' \mapsto \tsint \cost_2 \d{\xi'}\). This minimizer is the solution of \probref{OptStop'} for \(\cost\).

With this in hand we may pick up our
\begin{proof}[Proof of \autoref{cor:maxbarrier}]
The same arguments as in the proof of \autoref{cor:Bbarrier} apply, so we may assume that our probability space satisfies Assumption \assref{ass:randomization}.
We start with a cost function \(\cost(\omega,t) := (\cost_1(\omega,t),\cost_2(\omega,t)) := (-\rmax \omega t, (\rmax \omega t - \omega(t))^3)\).
\(\Lnorm{3}{\cost_1(B,\tau)} \leq \Lnorm{3}{\rmaxP{\abs{B}} \tau} \leq K_1 \Lnorm{3/2}{\tau}^{1/2}\), by the Burkholder-Davis-Gundy inequalities, so \((\cost_1)_-\) satisfies the uniform integrability condition and \(\E[\cost(B,\tau)]\) is finite for all stopping times \(\tau \sim \target\).
\(c_2 \geq 0\) and by the Burkholder-Davis-Gundy inequalities \(\E[c_2(B,\tau)] \leq \E[(\rmax B \tau)^3] \leq K_1 \E[\tau^{3/2}] = K_1 \tsint t^{3/2} \d{\target}(t)\) for some constant \(K_1\). The last term is finite by assumption.

By our discussion in the preceding paragraphs we find a solution \(\tau\) of \probref{OptStop} for \(\cost\) and a \Sgood' set \(\Gamma \subseteq \Spacetime\), for which \(\P[(B,\tau) \in \Gamma] = 1\) and \(\SG \cap (\init{\Gamma}\times\Gamma) = \emptyset\), where now \( ((\omega,t),(\eta,t)) \in \SG\) iff for all \(\timeindexed[s][t]{\F^t}\)-stopping times \(\sigma\) for which \(\Blaw{t}{0}(\sigma = t) < 1\), \(\Blaw{t}{0}(\sigma = \infty) = 0\), \(\tsint \sigma^{3/2} \d{\Blaw{t}{0}} < \infty\), setting \(\alpha := \push{(\omega \mapsto (\omega,\sigma(\omega)))}{\Blaw{t}{0}}\) we have that
either equation \eqref{eq:SG1} holds or
\begin{align}
\label{eq:SG1eq}
- \rmax \omega t + \tsint \rmax{((\omega,t) \conc \theta)}{s} \d{\alpha}(\theta,s) = 
- \rmax \eta   t + \tsint \rmax{((\eta  ,t) \conc \theta)}{s} \d{\alpha}(\theta,s)
\end{align}
\hspace*{\fill} and \hspace*{\fill}
\begin{align}
\label{eq:SG2}
\cost_2(\omega,t) - \tsint \cost_2((\omega,t) \conc \theta, s) \d{\alpha}(\theta,s) < 
\cost_2(\eta  ,t) - \tsint \cost_2((\eta  ,t) \conc \theta, s) \d{\alpha}(\theta,s) 
\fullstop
\end{align}
Now we want to apply \autoref{lem:geometry}, so we want to show that \(\omega(t) - \rmax \omega t < \eta(t) - \rmax \eta t\) implies \(((\omega,t),(\eta,t)) \in \SG\).
We already dealt with the case where \(\alpha\) is such that \eqref{eq:AY_simple_case} holds on a set of positive \(\alpha\)-measure. We now deal with the other case, so we have
\begin{align}
\label{eq:AY_less_simple_case}
\rmax \theta s \leq \rmax \eta t - \eta(t) < \rmax \omega t - \omega(t)
\end{align}
for \(\alpha\)-a.a.\ \((\theta,s)\) and we know that \eqref{eq:SG1eq} holds.
We show that \eqref{eq:SG2} holds.
Because of \eqref{eq:AY_less_simple_case}, \( \rmax{((\omega,t) \conc \theta)}{s} = \rmax \omega t \), and so \( \cost_2((\omega,t) \conc \theta, s) = (\rmax \omega t - \omega(t) - \theta(s))^3 \). We calculate the left hand side of \eqref{eq:SG2}.
\begin{multline*}
\tsint (\rmax \omega t - \omega(t))^3 - (\rmax \omega t - \omega(t) - \theta(s))^3 \d{\alpha}(\theta,s) = \\
\tsint 3 (\rmax \omega t - \omega(t))^2 \theta(s) - 3 (\rmax \omega t - \omega(t)) (\theta(s))^2 + (\theta(s))^3 \d{\alpha}(\theta,s) = \\
(\omega(t) - \rmax \omega t) 3 \tsint (\theta(s))^2 \d{\alpha}(\theta,s) + \tsint (\theta(s))^3 \d{\alpha}(\theta,s)
\end{multline*}
Here the Burkholder-Davis-Gundy inequalities show that both \(\tsint (\theta(s))^3 \d{\alpha}(\theta,s)\) and \( \tsint (\theta(s))^2 \d{\alpha}(\theta,s) \) are finite so that we may split the integral and they also show that \(\{\tilde B_{\sigma \wedge T}: T \geq t\}\) is uniformly integrable so that by the optional stopping theorem \(\tsint \theta(s) \d{\alpha}(\theta,s) = 0\). (\(\tilde B\) is again Brownian motion started in \(0\) at time \(t\) on \(\Paths[t]\).)

For the right hand side of \eqref{eq:SG2} we get the same expression with \(\omega\) replaced by \(\eta\).
This concludes the proof that \(\omega(t) - \rmax \omega t < \eta(t) - \rmax \eta t\) implies \(((\omega,t),(\eta,t)) \in \SG\) and \autoref{lem:geometry} gives us barriers \(\Rcl\), \(\Rop\) such that for their hitting times \(\taucl\), \(\tauop\) by \(B_t - \rmaxP B t\) we have \(\taucl \leq \tau \leq \tauop\) a.s.

Again we want to show that \(\taucl = \tauop\) a.s.\ and that they are actually stopping times.
Again we do so by showing that they are both a.s.\ equal to the hitting time of the closure of the respective barrier.
If \(\closure\Rcl \cap (\{0\} \times \Time) = \emptyset\) then this works in exactly the same way as in \autoref{lem:lessnasty}.
(This time we define \(\closure\tau_\varepsilon := \inf \{ t > 0 : (B^\varepsilon_t(\omega) - \rmaxP{(B^\varepsilon)}{t}(\omega), t) \in \closure\Barrier \}\) where \(B^\varepsilon_t(\omega) := B_t(\omega) + A(t) \varepsilon\).)
If \(\closure\Rcl \cap (\{0\} \times \Time) \neq \emptyset\) then \((B^\varepsilon_t(\omega) - \rmaxP{(B^\varepsilon)}{t}(\omega), t) \in \closure\Barrier\) and \(t>0\) need not imply \( B_t(\omega) - \rmaxP B t (\omega) < B^\varepsilon_t(\omega) - \rmaxP{(B^\varepsilon)}{t}(\omega)\), which is essential for the topological argument showing that the hitting time of \(\Barrier\) is less than or equal \(\closure\tau_\varepsilon\).
But if \(\closure\Rop \cap (\{0\} \times \Time) = \closure\Rcl \cap (\{0\} \times \Time) \neq \emptyset\), then \(\taucl\) and \(\tauop\) are both almost surely \(\leq T\) where \(T := \inf \{ t > 0 : (0,t) \in \closure\Rop\}\), so in the step where we show that the hitting time of \(\Barrier\) is less than \(\closure\tau_\varepsilon\) we can argue under the assumption that \(\closure\tau_\varepsilon(\omega) < T\). In this case we do have that \((B^\varepsilon_t(\omega) - \rmaxP{(B^\varepsilon)}{t}(\omega), t) \in \closure\Barrier\) and \(t>0\) implies \( B_t(\omega) - \rmaxP B t (\omega) < B^\varepsilon_t(\omega) - \rmaxP{(B^\varepsilon)}{t}(\omega)\).
\end{proof}

\begin{remark}\label{rem:otherpictures}
We hope that the proofs of \autoref{cor:Bbarrier} and \autoref{cor:maxbarrier} have given the reader some idea of how to apply the main results of this paper to arrive at barrier-type solutions of constrained optimal stopping problems, as depicted in \autoref{fig:Test}.

We would like to conclude by giving a couple of pointers to the interested reader who may want to work through the proofs corresponding to the remaining pictures in \autoref{fig:Test}.

For the problem of minimizing \(\E[\rmaxP B \tau]\), it may actually happen that the times \(\taucl, \tauop\) from \autoref{lem:geometry} do not coincide. Specifically one has to expect this to happen on a non-negligible set when \(\Rcl\) contains parts of the time axis which \(\Rop\) does not contain. Under these circumstances an optimizer may turn out to be a true randomized stopping time, with a proportion of a path hitting the time axis at a certain point needing to be stopped while the rest continues. In this situation the picture alone does not completely describe the optimal stopping time.

For the problems involving absolute values one needs to make a minor modification in the proof of \autoref{Invisible2}. Specifically one can allow \enquote{mirroring} the paths which are \enquote{transplanted} using the \nameref{lem:gardener}. This leads to a slightly different definition of Stop-Go pairs, which is perhaps most easily described by saying that in \autoref{fig:StopGo} the green paths which are stoppen by \(\sigma\) may be flipped upside-down on either side.
\end{remark}

\bibliographystyle{alpha}
\bibliography{optstop}{}

\end{document}